\def\noprint#1{}
\newtheorem{Str}{Implementation Strategy}[section]
\def\tto{\;{\lower 1pt \hbox{$\rightarrow$}}\kern -10pt
           \hbox{\raise 2.8pt \hbox{$\rightarrow$}}\;}
\newcommand{\lambdamin}{\lambda_{\mbox{\rm\scriptsize{min}}}}
\newcommand{\N}{\mathbb{N}}
\newcommand{\R}{\mathbb{R}}
\newcommand{\Sym}{\mathbb{S}}
\newcommand{\cS}{{\cal S}}
\newcommand{\eps}{\epsilon}
\newcommand{\epsg}{\epsilon_g}
\newcommand{\epsgbar}{\bar\epsilon_g}
\newcommand{\epsH}{\epsilon_H}
\newcommand{\flow}{f_{\mbox{\rm\scriptsize low}}}
\DeclareMathOperator{\lspan}{span}
\newcommand{\skCG}{s_k^{\rm CG}}
\newcommand{\capCG}{\mbox{\rm \texttt{capCG}}}
\newcommand{\tcO}{\tilde{\mathcal O}}
\DeclareRobustCommand{\Cmeo}{\mathcal{C}_{\mathrm{meo}}}
\def\revised#1{#1}
\def\revisedbis#1{#1}
\newcommand{\Kcal}{{\cal K}}
\newcommand{\Kcalis}{{\cal I}}
\newcommand{\Kcalbs}{{\cal B}}
\newcommand{\Scal}{{\cal S}}
\newcommand{\Ucal}{{\cal U}}
\newcommand{\lipg}{L_g}
\newcommand{\lipH}{L_H}
\newcommand{\cSleq}{\cS_{L}}
\newcommand{\cSgg}{\cS_{GG}}
\newcommand{\cSgleq}{\cS_{GL}}
\newcommand{\sout}{\mbox{\rm \texttt{outCG}}}
\newcommand{\OKBinf}{\mbox{\sc bnd-norm}}
\newcommand{\OKBneg}{\mbox{\sc bnd-neg}}
\newcommand{\OKIres}{\mbox{\sc int-res}}
\newcommand{\OKImax}{\mbox{\sc int-max}}
\newcommand{\itmax}{k_{\max}}
\newcommand{\Je}{J}
\newcommand{\pbpi}{\xi}
\newcommand{\true}{\textsc{true}}
\newcommand{\false}{\textsc{false}}
\crefname{equation}{}{}
\newtheorem{lemma}{Lemma}
\newtheorem{theorem}{Theorem}
\newtheorem{assumption}{Assumption}
\title{Trust-Region Newton-CG with Strong Second-Order Complexity Guarantees for Nonconvex Optimization\footnotemark[1]}
\author{
	Frank E. Curtis\footnotemark[2]
\and
	Daniel P. Robinson\footnotemark[2]
\and
	Cl\'ement W. Royer\footnotemark[3]
\and 
	Stephen J. Wright\footnotemark[4]
}
\begin{document}

\maketitle

\renewcommand{\thefootnote}{\fnsymbol{footnote}}
\footnotetext[1]{Version of \today.}
\footnotetext[2]{Department of Industrial and Systems Engineering, Lehigh University, 200 W. Packer Ave., Bethlehem, PA 18015-1582, USA. (\href{frank.e.curtis@lehigh.edu}{frank.e.curtis@lehigh.edu}, \href{daniel.p.robinson@lehigh.edu}{daniel.p.robinson@lehigh.edu}). Work of the first author was supported by DOE Award DE-SC0010615 and NSF Awards CCF-1740796 and CCF-1618717.}
\footnotetext[3]{LAMSADE, CNRS, Universit\'e Paris-Dauphine, Universit\'e PSL, 75016 PARIS, FRANCE. (\href{clement.royer@dauphine.psl.eu}{clement.royer@dauphine.psl.eu}). Work of this author was partially supported by Award N660011824020 from the DARPA Lagrange Program.}
\footnotetext[4]{Computer Sciences Department, University of Wisconsin, 1210 W. Dayton St., Madison, WI 53706, USA. (\href{swright@cs.wisc.edu}{swright@cs.wisc.edu}). Work of this author was supported by NSF Awards 1628384, 1634597, and 1740707; Subcontract 8F-30039 from Argonne National Laboratory; and Award N660011824020 from the DARPA Lagrange Program.}
\renewcommand{\thefootnote}{\arabic{footnote}}

\begin{abstract}
  Worst-case complexity guarantees for nonconvex optimization
  algorithms have been a topic of growing interest. Multiple
  frameworks that achieve the best known complexity bounds among a
  broad class of first- and second-order strategies have been
  proposed. These methods have often been designed primarily with
  complexity guarantees in mind and, as a result, represent a
  departure from the algorithms that have proved to be the most
  effective in practice.
  In this paper, we consider trust-region Newton methods, one of the
  most popular classes of algorithms for solving nonconvex
  optimization problems.  By introducing slight modifications to the
  original scheme, we obtain two methods---one based on exact
  subproblem solves and one exploiting inexact subproblem solves as in
  the popular ``trust-region Newton-Conjugate-Gradient'' (trust-region Newton-CG)
  method---with iteration and operation complexity bounds that match
  the best known bounds for the aforementioned class of first- and
  second-order methods. The resulting trust-region Newton-CG method also retains
  the attractive practical behavior of classical trust-region
  Newton-CG, which we demonstrate with numerical comparisons on a
  standard benchmark test set.
\end{abstract}

\paragraph{Key words.}
smooth nonconvex optimization, trust-region methods, Newton's method,
conjugate gradient method, Lanczos method, worst-case complexity,
negative curvature

\paragraph{AMS subject classifications.}
	49M05, 49M15, 65K05, 90C60

\section{Introduction} 
\label{sec:intro}


Consider the unconstrained optimization problem
\begin{equation} \label{eq:f}
\min_{x\in\R^n} \, f(x),
\end{equation}
where $f:\R^n \to \R$ is twice Lipschitz continuously differentiable
and possibly nonconvex.  We propose and analyze the complexity of two
trust-region algorithms for solving problem~\cref{eq:f}.  Our main
interest is in an algorithm that, for each subproblem, uses the
conjugate gradient (CG) method to minimize an exact second-order
Taylor series approximation of $f$ subject to a trust-region
constraint, as in so-called \emph{trust-region Newton-CG} methods.
Our complexity analysis for both methods is based on approximate
satisfaction of second-order necessary conditions for stationarity,
that is,
\begin{equation} \label{eq:2on}
  \nabla f(x)=0\ \ \text{and}\ \ \nabla^2 f(x) \;\; \mbox{positive semidefinite}.
\end{equation}
Specifically, given a pair of (small) real positive tolerances
$(\epsg,\epsH)$, our algorithms terminate when they find a point
$x^\eps$ such that
\begin{equation}\label{eq:2oneps}
  \|\nabla f(x^\eps)\| \le \epsg\ \ \text{and}\ \ \lambda_{\min}(\nabla^2 f(x^\eps)) \ge -\epsH,
\end{equation}
where $\lambda_{\min}(\cdot)$ denotes the minimum eigenvalue of its
symmetric matrix argument.  Such a point is said to be {\em
  $(\epsg,\epsH)$-stationary.}  By contrast, any point satisfying the
approximate first-order condition $\| \nabla f(x) \| \le \epsg$ is
called an {\em $\epsg$-stationary} point.

Recent interest in complexity bounds for nonconvex optimization stems
in part from applications in machine learning, where for certain
interesting classes of problems all local minima are global minima.
We have a particular interest in trust-region Newton-CG algorithms
since they have proved to be extremely effective in practice for a
wide range of large-scale applications.  We show in this paper that by
making fairly minor modifications to such an algorithm, we can equip
it with strong theoretical complexity properties without significantly \revised{degrading important performance measures such as the number of iterations, function evaluations, and gradient evaluations required until an $(\epsg,\epsH)$-stationary point is reached}.  This is in contrast to other recently proposed
schemes that achieve good complexity properties, but have not
demonstrated such good performance in practice \revised{against a state-of-the-art trust-region Newton-CG algorithm; see, e.g.,}  \cite{NAgarwal_ZAllenZhu_BBullins_EHazan_TMa_2017,YCarmon_JCDuchi_OHinder_ASidford_2018a}.

We prove results concerning both {\em iteration complexity} and {\em
  operation complexity}. The former refers to a bound on the number of
``outer'' iterations required to identify a point that satisfies
\eqref{eq:2oneps}. For the latter, we identify a \emph{unit operation}
and find a bound on the number of such operations required to find a
point satisfying \eqref{eq:2oneps}.  As in earlier works on Newton-CG
methods \cite{CWRoyer_SJWright_2018,CWRoyer_MONeill_SJWright_2019},
the unit operation is either a gradient evaluation or a Hessian-vector
multiplication.  In both types of complexity---iteration and
operation---we focus on the dependence of bounds on the tolerances
$\epsg$ and $\epsH$.

Our chief contribution is to show that a trust-region Newton-CG method
can be modified to have state-of-the-art operation complexity
properties for locating an $(\epsg,\epsg^{1/2})$-stationary point,
matching recent results for modified line search Newton methods, cubic
regularization methods, and other approaches based on applying accelerated
gradient to nonconvex functions (see Section~\ref{subsec:intro:liter}).
The setting $\epsH=\epsg^{1/2}$ is known to yield the lowest operation
complexity bounds for several classes of second-order algorithms.

\subsection{Outline}

We specify assumptions and notation used throughout the paper in
Section~\ref{subsec:intro:notat}, and discuss relevant literature
briefly in Section~\ref{subsec:intro:liter}. Section~\ref{sec:exact}
describes a trust-region Newton method in which we assume that the
subproblem is solved exactly at each iteration, and in which the
minimum eigenvalue of the Hessian is calculated as necessary to verify
the conditions \eqref{eq:2oneps}. We prove the iteration complexity of
this method, setting the stage for our investigation of a method using
inexact subproblem solves. In Section~\ref{sec:tcg}, we describe an
inexact implementation of the solution of the trust-region subproblem
by a conjugate gradient method, and find bounds on the number of
matrix-vector multiplications required for this method. We also
discuss the use of iterative methods to obtain approximations to the
minimum eigenvalue of the Hessian. Section~\ref{sec:inexact} describes
a trust-region Newton-CG method that incorporates the inexact solvers
of Section~\ref{sec:tcg}, and analyzes its iteration and operation
complexity properties.  We describe \revised{implementation challenges in 
Section~\ref{sec:practice}, then detail our} computational experiments in
Section~\ref{sec:numer}. \revised{Finally, we} make some concluding observations in
Section~\ref{sec:conc}.

\subsection{Assumptions and notation}
\label{subsec:intro:notat}

We write $\R{}$ for the set of real numbers (that is, scalars), $\R^n$
for the set of $n$-dimensional real vectors, $\R^{m \times n}$ for the
set of $m$-by-$n$-dimensional real matrices, $\Sym^n \subset \R^{n
  \times n}$ for the set of $n$-by-$n$-dimensional real symmetric
matrices, and $\N{}$ for the set of nonnegative integers.  For
$v\in\R^{n}$, we use $\|v\|$ to denote the $\ell_2$-norm of~$v$.
Given scalars $(a,b) \in \R \times \R$, we write $a\perp b$ to mean $ab
= 0$.

In reference to problem~\cref{eq:f}, we use $g := \nabla f : \R^{n}
\to \R{}$ and $H := \nabla^2 f : \R^{n} \to \Sym^n$ to denote the
gradient and Hessian functions of $f$, respectively.  For each
iteration $k \in \N{}\revised{= \{0,1,2,\dotsc\}}$ of an algorithm for solving \cref{eq:f}, we let
$x_k$ denote the $k$th solution estimate (that is, iterate) computed. For
brevity, we append $k \in \N{}$ as a subscript to a function to denote
its value at the $k$th iterate, e.g., $f_k := f(x_k)$, $g_k :=
g(x_k)$, and $H_k := H(x_k)$.  The subscript $j \in \N{}$ is similarly
used for the iterates of the subroutines used for computing search
directions for an algorithm for solving \cref{eq:f}.  Given $H_k \in
\Sym^n$, we let $\lambda_k := \lambda_{\min}(H_k)$ denote the minimum
eigenvalue of $H_k$ with respect to $\R$.

Given functions $\phi : \R \to \R$ and $\varphi : \R \to [0,\infty)$,
  we write $\phi(\cdot) = {\mathcal O}(\varphi(\cdot))$ to indicate
  that $|\phi(\cdot)| \leq C \varphi(\cdot)$ for some $C \in
  (0,\infty)$.  Similarly, we write $\phi(\cdot) =
  \tcO(\varphi(\cdot))$ to indicate that $|\phi(\cdot)| \leq C
  \varphi(\cdot) |\log^c(\cdot)|$ for some $C \in (0,\infty)$ and $c
  \in (0,\infty)$.  In this manner, one finds that ${\mathcal
      O}(\varphi(\cdot)\log^c(\cdot)) \equiv \tcO(\varphi(\cdot))$ for
    any $c \in (0,\infty)$.


      The following assumption on the objective function in
      \cref{eq:f} is made throughout.

\begin{assumption}\label{assum:f}
  The objective function value sequence $\{f_k\}$ is bounded below by
  $\flow \in \R{}$.  The sequence of line segments $\{[x_k,x_k+s_k]\}$
  lies in an open set 
  over which $f$ is twice continuously
  differentiable and the gradient and Hessian functions are Lipschitz
  continuous with constants $\lipg \in (0,\infty)$ and $\lipH \in
  (0,\infty)$, respectively.
\end{assumption}

The following bounds are implied by \cref{assum:f} (see e.g.,
\cite{nesterov1998introductory}):
\begin{subequations} \label{eq:fub}
  \begin{align}
    \label{eq:fub.2}
    f(x_k+s_k) - f_k - g_k^T s_k - \tfrac12 s_k^T H_k s_k  & \le 
     \tfrac{\lipH}{6} \|s_k\|^3 && \text{for all $k\in\N$,}\\
    \label{eq:fub.3}
    \|g(x_k+s_k) - g_k - H_k s_k\| 
    & \le \tfrac{\lipH}{2}\|s_k\|^2 && \text{for all $k \in \N$,}\\
    \label{eq:fub.4}
    \text{and} \ \ \|H_k \| &\le L_g && \text{for all $k \in \N$.}
    \end{align}
\end{subequations}

\subsection{Literature review}
\label{subsec:intro:liter}

Complexity results for smooth nonconvex optimization algorithms abound
in recent literature. We discuss these briefly and give some pointers
below, with a focus on methods with best known complexity.

Cubic regularization
\cite[Theorem~1]{YuNesterov_BTPolyak_2006} has iteration complexity
$\mathcal{O}(\epsg^{-3/2})$ to find an $(\epsg,\epsg^{1/2})$-stationary
point; see also \cite{CCartis_NIMGould_PhLToint_2011b,CCartis_NIMGould_PhLToint_2012a}. Algorithms that find such a point with {\em operation}
complexity $\tilde{\mathcal{O}}(\epsg^{-7/4})$, with high probability,
were proposed in
\cite{NAgarwal_ZAllenZhu_BBullins_EHazan_TMa_2017,YCarmon_JCDuchi_OHinder_ASidford_2018a}.
(The ``high probability'' is due to the use of randomized iterative
methods for calculating a minimum eigenvalue and/or solving a linear
system.) A method that \emph{deterministically} finds an
$\epsg$-stationary point in $\tcO(\epsg^{-7/4})$ gradient evaluations
was described in \cite{YCarmon_JCDuchi_OHinder_ASidford_2017a}.

Line search methods that make use of Newton-like steps, the conjugate
gradient method for inexactly solving linear systems, and randomized
Lanczos for calculating negative curvature directions are described in
\cite{CWRoyer_SJWright_2018,CWRoyer_MONeill_SJWright_2019}. These
methods also have operation complexity $\tcO(\epsg^{-7/4})$ to find a
$(\epsg,\epsg^{1/2})$-stationary point, with high probability. The
method in \cite{CWRoyer_MONeill_SJWright_2019} finds an
$\epsg$-stationary point deterministically in
$\tcO(\epsg^{-7/4})$ operations, showing that the conjugate gradient method on 
nonconvex quadratics shares properties with accelerated 
gradient on nonconvex problems as described 
in~\cite{YCarmon_JCDuchi_OHinder_ASidford_2017a}.


\paragraph{Trust-region methods}
An early result of \cite{SGratton_ASartenaer_PhLToint_2008} shows that
standard trust-region methods require $\mathcal{O}(\epsg^{-2})$
iterations to find an $\epsg$-stationary point; this complexity was
shown to be sharp in
\cite{CCartis_NIMGould_PhLToint_2011c}. 
A trust-region Newton method with iteration complexity of
$\mathcal{O}(\max\{\epsg^{-3/2},\epsH^{-3}\})$ for finding an
$(\epsg,\epsH)$-stationary point is described in
\cite{FECurtis_DPRobinson_MSamadi_2017}. This
complexity matches that of cubic regularization methods
\cite{YuNesterov_BTPolyak_2006,CCartis_NIMGould_PhLToint_2011b,CCartis_NIMGould_PhLToint_2012a}.

Another method that uses trust regions in conjunction with a cubic
model to find an $(\epsg,\epsH)$-stationary point with guaranteed
complexity appears in \cite{JMMartinez_MRaydan_2017}. This is not a
trust-region method in the conventional sense because it fixes the
trust-region radius at a constant value. Other methods that combine
trust-region and cubic-regularization techniques in search of good
complexity bounds are described in
\cite{FECurtis_DPRobinson_MSamadi_2018b,JPDussault_2018,JPDussault_DOrban_2015,EBergou_YDiouane_SGratton_2017,EBergou_YDiouane_SGratton_2018}.

\paragraph{Solving the trust-region subproblem}
Efficient solution of the trust-region subproblem is a core aspect of
both the theory and practice of trust-region methods. In the context
of this paper, such results are vital in turning an iteration
complexity bound into an operation complexity bound.  The fact that
the trust-region subproblem (with a potentially nonconvex objective)
can be solved efficiently remains surprising to many.  This is
especially true since it has some complicating features, particularly
the ``hard case'' in which, in iteration $k \in \N{}$, the gradient
$g_k$ is orthogonal to the eigenspace of the Hessian $H_k$
corresponding to its minimum eigenvalue.

Approaches for solving trust-region subproblems based on matrix factorizations are described in
\cite{JJMore_DCSorensen_1983}; see also
\cite[Chapter~4]{JNocedal_SJWright_2006}. For large-scale problems,
iterative techniques based on the conjugate gradient (CG)
algorithm~\cite{TSteihaug_1983,PhLToint_1981} and the Lanczos
method~\cite{NIMGould_SLucidi_MRoma_PhLToint_1999,SGNash_1984} have
been described in the literature. Convergence rates for the
method of \cite{NIMGould_SLucidi_MRoma_PhLToint_1999} are presented in
\cite{LHZhang_CShen_RCLi_2017}, though results are weaker in the hard
case.

Global convergence rates in terms of the objective function values for
the trust-region subproblem are a recent focus; see for example
\cite{EHazan_TKoren_2016}, wherein the authors use an SDP relaxation,
and \cite{JWang_YXia_2017}, wherein the authors apply an accelerated
gradient method to a convex reformulation of the trust-region
subproblem (which requires an estimate of the minimum eigenvalue of
the Hessian). Both solve the trust-region subproblem to within $\eps$
of the optimal subproblem objective value in $\tcO(\eps^{-1/2})$ time.

A recent method based on Krylov subspaces is presented in
\cite{YCarmon_JCDuchi_2018}.  This method circumvents the hard case by
its use of randomization. Subsequent work in
\cite{NIMGould_VSimoncini_2019} derives a convergence rate for the
norm of the residual vectors in the Krylov-subspace approach.

The hard case does not present a serious challenge to the main
algorithm that we propose (\cref{alg:inexact2}).  When it
occurs, either the conjugate gradient procedure (\cref{alg:etcg})
returns an acceptable trial step, or else the minimum-eigenvalue
procedure (\cref{alg:meo}) will be invoked to find a negative
curvature step.



\section{An exact trust-region Newton method}
\label{sec:exact}

In this section, we propose a trust-region Newton method that uses,
during each iteration, the \emph{exact} solution of a (regularized)
trust-region subproblem.  The algorithm is described in
Section~\ref{subsec:exact:algo} and its complexity guarantees are
analyzed in Section~\ref{subsec:exact:wcc}.  Our analysis of this
method sets the stage for our subsequent method that uses inexact
subproblem solutions.

\subsection{The algorithm}
\label{subsec:exact:algo}

Our trust-region Newton method with exact subproblem solves, which is
inspired in part by the line search method proposed in
\cite{CWRoyer_SJWright_2018}, is written as \cref{alg:exact}. Unlike a
traditional trust-region method, the second-order stationarity tolerance
$\epsH \in (0,\infty)$ is used to quantify a regularization of the quadratic model $m_k : \R^{n} \to \R{}$ of $f$ at $x_k$ used in
the subproblem, which is given by
\begin{equation} \label{eq:m}
  m_k(x) := f_k + g_k^T(x - x_k) + \tfrac12 (x - x_k)^T H_k (x - x_k);
\end{equation}
see \cref{eq:algos:exacttrsubpb}.  Our choice of regularization makes
for a relatively straightforward complexity analysis because it causes
the resulting trial step $s_k$ to satisfy certain desirable objective
function decrease properties.  Of course, a possible downside is that
the practical behavior of the method may be affected by the choice of
the stationarity tolerance~$\epsH$, which is not the case for a
traditional trust-region framework.  
In any case,
the remainder of \cref{alg:exact} is identical to a traditional
trust-region Newton method.

Before presenting our analysis of \cref{alg:exact}, we remark that \revisedbis{the sequence 
$\{\lambda_k\}$ of minimum eigenvalues of the Hessians $\{H_k\}$} does not influence the iterate sequence $\{x_k\}$.
The only use of these values is in the termination test in
line~\ref{step:terminate} to determine when an
$(\epsg,\epsH)$-stationary point has been found.

\begin{algorithm}[ht!]
\caption{Trust-Region Newton Method (exact version)}
\label{alg:exact}
\begin{algorithmic}[1]
\REQUIRE Tolerances $\epsg \in (0,\infty)$ and $\epsH \in (0,\infty)$; \revised{trust-region adjustment} parameters $\gamma_1 \in (0,1)$, $\gamma_2 \in [1,\infty)$, \revised{and $\psi \in (1/\gamma_2,1]$}; initial iterate $x_0 \in \R^n$; initial trust-region radius $\delta_0 \in (0,\infty)$; maximum trust-region radius $\delta_{\max} \in [\delta_0,\infty)$; and step acceptance parameter $\eta \in (0,1)$.
\smallskip
\hrule
\smallskip
\FOR{$k=0,1,2,\dotsc$}
\STATE Evaluate $g_k$ and $H_k$.
\STATE Initialize $\lambda_k \gets \infty$.
\IF{$\|g_k\|\le \epsg$}
\STATE Compute $\lambda_k \gets \lambda_{\min}(H_k)$.\label{step:lambda}
\IF{$\lambda_k \ge -\epsH$}\label{step:terminate}
\STATE \textbf{return} $x_k$ as an $(\epsg,\epsH)$-stationary point for problem~\cref{eq:f}.
\ENDIF
\ENDIF
\STATE Compute a trial step $s_k$ as a solution to the regularized trust-region subproblem
\begin{equation} \label{eq:algos:exacttrsubpb}
  \min_{s \in \R^n}\ m_k(x_k+s) + \tfrac{1}{2}\epsH \|s\|^2 \ \ 
  \mathrm{s.t.}\ \ \|s\| \le \delta_k.
\end{equation}
\STATE Compute the ratio of actual-to-predicted reduction in $f$, defined as
\begin{equation}\label{eq.rho} 
  \rho_k \gets \frac{f_k-f(x_k+s_k)}{m_k(x_k)-m_k(x_k+s_k)}.
\end{equation}
\IF{$\rho_k \ge \eta$}
\STATE Set $x_{k+1} \gets x_k+s_k$.
\revised{\IF{$\|s_k\| \ge \psi \delta_k$}
\STATE Set $\delta_{k+1} \gets \min\left\{\gamma_2\delta_k,\delta_{\max}\right\}$.
\ELSE \STATE Set $\delta_{k+1} \gets \delta_k$.
\ENDIF}
\ELSE
\STATE Set $x_{k+1} \gets x_k$ and $\delta_{k+1} \gets \gamma_1 \revised{ \|s_k\|}$.
\ENDIF
\ENDFOR
\end{algorithmic}
\end{algorithm}

\subsection{Iteration complexity}
\label{subsec:exact:wcc}

We show that \cref{alg:exact} reaches an $(\epsg,\epsH)$-stationary
point in a number of iterations that is bounded by a function of
$\epsg$ and $\epsH$.  To this end, let us define the set of iteration
numbers
\[
  \Kcal := \{k \in \mathbb{N} : \text{iteration $k$ is completed without algorithm termination}\}
\]
along with the subsets
\[
    \Kcalis := \{k \in \Kcal : \|s_k\| < \delta_k \} 
    \quad \text{and} \quad 
    \Kcalbs := \{k \in \Kcal : \|s_k\| = \delta_k \}
\]
and
\[
\Scal := \{k \in \Kcal : \rho_k \geq \eta\} \quad \text{and} \quad
\Ucal := \{k \in \Kcal : \rho_k < \eta\}.
\]
The pairs $(\Kcalis,\Kcalbs)$ and $(\Scal,\Ucal)$ are each
partitions of $\Kcal$.
The iterations with $k \in \Kcalis$ are those with~$s_k$ in the
\emph{interior} of the trust region, and those with $k\in \Kcalbs$ are
those with~$s_k$ on the \emph{boundary} of the trust region.  The
iterations with $k \in \Scal$ are called the \emph{successful}
iterations and those with $k \in \Ucal$ are called the
\emph{unsuccessful} iterations.  Due to the termination conditions in
line~\ref{step:terminate}, it follows for \cref{alg:exact} that
\begin{equation}\label{set:K}
  \Kcal = \{k \in \mathbb{N} : \text{iteration $k$ is reached and either $\|g_k\| > \epsg$ or $\lambda_k < -\epsH$}\}.
\end{equation}
It follows that, for a run of \cref{alg:exact}, the cardinalities of
all of the index sets $\Kcal$, $\Kcalis$, $\Kcalbs$, $\Scal$, and
$\Ucal$ are functions of the tolerance parameters $\epsg$ and $\epsH$.

Since $s_k$ is computed as the global solution of the trust-region
subproblem~\cref{eq:algos:exacttrsubpb}, it is well
known~\cite{JJMore_DCSorensen_1983,JNocedal_SJWright_2006} that there
exists a scalar Lagrange multiplier $\mu_k$ such that
\begin{subequations} \label{eq.tr_opt}
  \begin{align}
    g_k + (H_k + \epsH I + \mu_k I)s_k &= 0, \label{eq.tr_opt_1} \\
    H_k + \epsH I + \mu_k I &\succeq 0, \label{eq.tr_opt_2}\\
    \text{and} \ \ 0 \leq \mu_k \perp (\delta_k - \|s_k\|) &\geq 0 \label{eq.tr_opt_3}.
  \end{align}
\end{subequations}

Our first result is a lower bound on the model reduction achieved by a trial step.

\begin{lemma}\label{lem:exact.modeldec}
  For all $k \in \Kcal$, the model reduction satisfies
  \begin{equation}\label{eq:exact:decreasemodel}
    m_k(x_k) - m_k(x_k + s_k) \geq \tfrac{1}{2}\epsH \|s_k\|^2.
  \end{equation}
\end{lemma}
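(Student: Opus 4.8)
The plan is a direct computation that exploits the optimality conditions~\cref{eq.tr_opt} for the regularized subproblem~\cref{eq:algos:exacttrsubpb}. First I would expand the model reduction using the definition~\cref{eq:m} of $m_k$, which gives
\begin{equation*}
  m_k(x_k) - m_k(x_k+s_k) = -g_k^Ts_k - \tfrac12 s_k^TH_ks_k.
\end{equation*}
Next I would eliminate $g_k$ using the stationarity condition~\cref{eq.tr_opt_1}, namely $g_k = -(H_k + \epsH I + \mu_k I)s_k$, so that $-g_k^Ts_k = s_k^TH_ks_k + \epsH\|s_k\|^2 + \mu_k\|s_k\|^2$, and hence
\begin{equation*}
  m_k(x_k) - m_k(x_k+s_k) = \tfrac12 s_k^TH_ks_k + \epsH\|s_k\|^2 + \mu_k\|s_k\|^2.
\end{equation*}

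The next step is to lower-bound the indefinite term $\tfrac12 s_k^TH_ks_k$. The second-order condition~\cref{eq.tr_opt_2}, $H_k + \epsH I + \mu_k I \succeq 0$, yields $s_k^TH_ks_k \ge -(\epsH+\mu_k)\|s_k\|^2$, so $\tfrac12 s_k^TH_ks_k \ge -\tfrac12(\epsH+\mu_k)\|s_k\|^2$. Substituting this into the identity above gives
\begin{equation*}
  m_k(x_k) - m_k(x_k+s_k) \ge \tfrac12\epsH\|s_k\|^2 + \tfrac12\mu_k\|s_k\|^2 \ge \tfrac12\epsH\|s_k\|^2,
\end{equation*}
where the last inequality uses $\mu_k \ge 0$ from~\cref{eq.tr_opt_3}. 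This establishes~\cref{eq:exact:decreasemodel}.

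This lemma is essentially a short algebraic manipulation, so I do not anticipate a genuine obstacle; the only subtlety is recognizing that one should combine the first-order condition (to rewrite the linear term) with the positive-semidefiniteness condition (to control the curvature term), after which the $\tfrac12\epsH I$ regularization that was added to the model is exactly what survives. It is worth noting that the argument does not require $s_k$ to lie on the boundary of the trust region, so the bound holds uniformly for $k \in \Kcalis$ and $k \in \Kcalbs$ alike.
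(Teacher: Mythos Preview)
Your proof is correct and is essentially the same as the paper's: both use~\cref{eq.tr_opt_1} to eliminate $g_k$, then~\cref{eq.tr_opt_2} to control the curvature term, and finally $\mu_k\ge 0$ from~\cref{eq.tr_opt_3}. The only difference is cosmetic algebraic grouping---the paper keeps the combination $\tfrac12 s_k^T(H_k+\epsH I+\mu_k I)s_k$ intact and drops it directly via positive semidefiniteness, whereas you expand and bound $s_k^TH_ks_k$ separately; the two computations are equivalent.
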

\begin{proof}
  The definition of $m_k$ in~\cref{eq:m} and the optimality conditions
  in~\cref{eq.tr_opt} give
  \begin{eqnarray*}
    m_k(x_k) - m_k(x_k+s_k) &= &-g_k^T s_k - \tfrac{1}{2} s_k^T H_k s_k \\
    &= & s_k^T (H_k + \epsH I + \mu_k I) s_k - \tfrac{1}{2}s_k^T H_k s_k \\ 
    &= & \tfrac{1}{2} s_k^T (H_k + \epsH I + \mu_k I) s_k + \tfrac{1}{2}\epsH \|s_k\|^2 
    +\tfrac{1}{2}\mu_k \|s_k\|^2 \\
    &\ge &	\tfrac{1}{2}\epsH \|s_k\|^2,
  \end{eqnarray*}
  as desired.
\end{proof}

Next, we show that all sufficiently small trial steps 
lead to successful iterations.

\begin{lemma}\label{lem:exact:delta-lb}
  For all $k \in \Kcal$, 
  \revised{if $k \in \Ucal$, then $\delta_k > 3(1-\eta) \epsH/\lipH$.}
  Hence, by the trust-region radius update
  procedure, it follows that
  \begin{equation*}
    \delta_k \geq \delta_{\min}:= \min\left\{\delta_0, \left(\tfrac{3 \gamma_1 (1-\eta)}{\lipH} \right)\epsH \right\} \in (0,\infty)\ \ \text{for all}\ \ k \in \Kcal.
  \end{equation*}
\end{lemma}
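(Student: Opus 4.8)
The plan is to establish first the auxiliary fact that an unsuccessful iteration cannot have too small a step, and then feed the resulting step lower bound into the trust-region radius update to get the uniform bound. For the first part, I would combine \cref{lem:exact.modeldec} with the cubic error bound \cref{eq:fub.2}. Using $m_k(x_k)=f_k$ and $f(x_k+s_k)-m_k(x_k+s_k)=f(x_k+s_k)-f_k-g_k^Ts_k-\tfrac12 s_k^TH_ks_k$, one has
\[
f_k - f(x_k+s_k) = \big(m_k(x_k)-m_k(x_k+s_k)\big) - \big(f(x_k+s_k)-m_k(x_k+s_k)\big) \ge \big(m_k(x_k)-m_k(x_k+s_k)\big) - \tfrac{\lipH}{6}\|s_k\|^3,
\]
so that, dividing by the (strictly positive) model decrease and invoking \cref{lem:exact.modeldec},
\[
\rho_k \ge 1 - \frac{\lipH\|s_k\|^3/6}{m_k(x_k)-m_k(x_k+s_k)} \ge 1 - \frac{\lipH\|s_k\|}{3\epsH}.
\]
Here I need $m_k(x_k)-m_k(x_k+s_k)>0$, which requires $s_k\ne 0$; I would note that $s_k=0$ is impossible for $k\in\Kcal$, since then \cref{eq.tr_opt_1} forces $g_k=0$ and, because $\delta_k>0$, complementarity \cref{eq.tr_opt_3} forces $\mu_k=0$, whence \cref{eq.tr_opt_2} gives $\lambda_k\ge-\epsH$, contradicting the characterization \cref{set:K} of $\Kcal$.

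With this in hand, if $k\in\Ucal$ then $\rho_k<\eta$, and the displayed bound yields $\eta > 1 - \lipH\|s_k\|/(3\epsH)$, i.e. $\|s_k\| > 3(1-\eta)\epsH/\lipH$; since $\|s_k\|\le\delta_k$ this gives exactly the claimed strict inequality $\delta_k > 3(1-\eta)\epsH/\lipH$ on unsuccessful iterations.

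For the uniform lower bound I would argue by induction on the iteration index, maintaining both $\delta_k\ge\delta_{\min}$ and $\delta_k\le\delta_{\max}$. The radius fails to be non-decreasing only on an unsuccessful iteration, where it is reset to $\delta_{k+1}=\gamma_1\|s_k\|$; by the step bound just proved, $\delta_{k+1}=\gamma_1\|s_k\| > 3\gamma_1(1-\eta)\epsH/\lipH \ge \delta_{\min}$, and also $\delta_{k+1} = \gamma_1\|s_k\| \le \gamma_1\delta_k < \delta_k \le \delta_{\max}$. On a successful iteration, either $\delta_{k+1}=\delta_k$ or $\delta_{k+1}=\min\{\gamma_2\delta_k,\delta_{\max}\}$; since $\gamma_2\ge1$ and $\delta_k\le\delta_{\max}$, in both cases $\delta_{k+1}\ge\delta_k\ge\delta_{\min}$ and $\delta_{k+1}\le\delta_{\max}$. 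Starting from $\delta_0\ge\delta_{\min}$ and $\delta_0\le\delta_{\max}$, the induction closes and every reached $\delta_k$ satisfies $\delta_k\ge\delta_{\min}=\min\{\delta_0,\,3\gamma_1(1-\eta)\epsH/\lipH\}$, which is strictly positive since $\delta_0>0$, $\gamma_1>0$, $\eta\in(0,1)$, $\epsH>0$, and $\lipH<\infty$.

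I do not expect a genuine obstacle: this is the textbook "sufficiently small trial steps are accepted" estimate, specialized to the $\epsH$-regularized model, whose regularization is precisely what makes the model-decrease lower bound $\tfrac12\epsH\|s_k\|^2$ available. The only point needing care is the well-definedness of $\rho_k$ (positivity of the model decrease), handled by the observation that $s_k\ne 0$ whenever $k\in\Kcal$; a secondary detail is that the nonstandard unsuccessful update $\delta_{k+1}=\gamma_1\|s_k\|$ — rather than $\gamma_1\delta_k$ — is exactly what lets the step lower bound propagate into a radius lower bound.
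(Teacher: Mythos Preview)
Your proof is correct and follows essentially the same approach as the paper: both combine \cref{lem:exact.modeldec} with the cubic error bound \cref{eq:fub.2} to show that $k\in\Ucal$ forces $\|s_k\|>3(1-\eta)\epsH/\lipH$, then propagate this through the radius update. Your version is slightly more careful in two places---you explicitly justify $s_k\ne 0$ via the optimality conditions \cref{eq.tr_opt} and the characterization \cref{set:K}, and you spell out the induction for the uniform lower bound---whereas the paper divides by $\|s_k\|^2$ without comment and leaves the induction implicit.
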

\begin{proof}
  \revised{We begin by proving the first statement of the lemma.} 
  To that end, suppose that
  $k\in\Ucal$ (meaning that $\rho_k <
  \eta$), which from the definition of $\rho_k$ means that
  \begin{equation}\label{rho-violated}
    \eta\left(m_k(x_k+s_k)-m_k(x_k)\right) < f(x_k+s_k)-f_k.
  \end{equation}
  Combining~\cref{rho-violated} with \cref{eq:fub.2}, \cref{eq:algos:exacttrsubpb}, 
  \cref{lem:exact.modeldec}, and \cref{eq:m} leads to 
  \begin{eqnarray*}
    &\phantom{\Rightarrow}
    &\eta \left(m_k(x_k+s_k)-m_k(x_k)\right) < g_k^T s_k + 
    \tfrac{1}{2}s_k^T H_k s_k + \tfrac{\lipH}{6}\|s_k\|^3 \\
    &\implies 
    &(\eta-1)\left(m_k(x_k+s_k)-m_k(x_k)\right) < \tfrac{\lipH}{6}\|s_k\|^3 \\
    &\implies 
    &\tfrac{1-\eta}{2}\epsH\|s_k\|^2< \tfrac{\lipH}{6}\revised{\|s_k\|^3}\\
    &\implies &\tfrac{3(1-\eta)}{\lipH}\epsH < \revised{\|s_k\|}.
  \end{eqnarray*}
  We have shown that $k\in\Ucal$ implies $\delta_k \revised{\ge \|s_k\|} >
  3(1-\eta)\epsH/\lipH$, \revised{as desired. Using a contraposition argument, we also 
  have that $\|s_k\| \leq 3(1-\eta)\epsH/\lipH$} implies $k \in \Scal$.
  Combining this with the trust-region radius update procedure and
  accounting for the initial radius $\delta_0$ completes the proof.
\end{proof}

We now establish that each successful step guarantees that a certain amount
of decrease in the objective function value is achieved.

\begin{lemma} \label{lem:exact.actualdec} 
  The following hold for all successful iterations:
  \begin{enumerate}[(i)]  
    \item If $k \in \Kcalbs \cap \Scal$, then 
      \begin{equation*} 
        f_k - f_{k+1} \geq \tfrac{\eta}{2}\epsH\delta_k^2.
      \end{equation*}
    \item If $k \in \Kcalis \cap \Scal$, then
       \begin{equation*} 
        f_k - f_{k+1} \ge  
        \tfrac{\eta}{2(1+2\lipH)} \,
        \min\left\{ \|g_{k+1}\|^2 \epsH^{-1}, 
        \epsH^3 \right\}.
      \end{equation*}
  \end{enumerate}
\end{lemma}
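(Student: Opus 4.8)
The plan is to handle the two parts separately, using \cref{lem:exact.modeldec} as the common tool and exploiting the fact that $k\in\Kcalis$ forces the trust-region multiplier to vanish.

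For part (i) I would argue directly. Since $k\in\Scal$, the definition of $\rho_k$ in \cref{eq.rho} together with $\rho_k\ge\eta$ and the (nonnegative) model reduction from \cref{lem:exact.modeldec} give $f_k-f_{k+1}=\rho_k\bigl(m_k(x_k)-m_k(x_k+s_k)\bigr)\ge\eta\bigl(m_k(x_k)-m_k(x_k+s_k)\bigr)$. Invoking \cref{lem:exact.modeldec} once more and using $\|s_k\|=\delta_k$ (which holds because $k\in\Kcalbs$) yields $f_k-f_{k+1}\ge\tfrac{\eta}{2}\epsH\|s_k\|^2=\tfrac{\eta}{2}\epsH\delta_k^2$. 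That is the whole of part (i).

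For part (ii) the first observation is that $k\in\Kcalis$ means $\|s_k\|<\delta_k$, so the complementarity condition \cref{eq.tr_opt_3} forces $\mu_k=0$; hence \cref{eq.tr_opt_1} reduces to $g_k+H_ks_k=-\epsH s_k$. Exactly as in part (i), combining $\rho_k\ge\eta$ with \cref{lem:exact.modeldec} gives the raw decrease bound $f_k-f_{k+1}\ge\tfrac{\eta}{2}\epsH\|s_k\|^2$, so the work is to convert this into a bound involving $\|g_{k+1}\|$. For that I would use $x_{k+1}=x_k+s_k$, the identity just derived, and the Lipschitz estimate \cref{eq:fub.3}:
\begin{equation*}
  \|g_{k+1}\| \le \|g(x_k+s_k)-g_k-H_ks_k\| + \|g_k+H_ks_k\| \le \tfrac{\lipH}{2}\|s_k\|^2 + \epsH\|s_k\| = \|s_k\|\Bigl(\epsH+\tfrac{\lipH}{2}\|s_k\|\Bigr).
\end{equation*}
The last step is a case split on the size of $\|s_k\|$. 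In the ``small-step'' regime $\epsH+\tfrac{\lipH}{2}\|s_k\|\le\sqrt{1+2\lipH}\,\epsH$, the display gives $\|g_{k+1}\|^2\le(1+2\lipH)\epsH^2\|s_k\|^2$, hence $\|g_{k+1}\|^2\epsH^{-1}\le(1+2\lipH)\epsH\|s_k\|^2\le\tfrac{2(1+2\lipH)}{\eta}(f_k-f_{k+1})$, which rearranges to $f_k-f_{k+1}\ge\tfrac{\eta}{2(1+2\lipH)}\|g_{k+1}\|^2\epsH^{-1}$. In the complementary ``large-step'' regime one has $\|s_k\|>\tfrac{2(\sqrt{1+2\lipH}-1)}{\lipH}\epsH$ (using $\lipH>0$ from \cref{assum:f}), and substituting into $f_k-f_{k+1}\ge\tfrac{\eta}{2}\epsH\|s_k\|^2$ produces a lower bound proportional to $\epsH^3$ whose constant, after a short simplification using $\sqrt{1+2\lipH}\ge1$, is at least $\tfrac{1}{2(1+2\lipH)}$; thus $f_k-f_{k+1}\ge\tfrac{\eta}{2(1+2\lipH)}\epsH^3$. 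Taking the weaker of the two conclusions gives the $\min$ asserted in the lemma.

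The routine content is the triangle-inequality and Lipschitz bookkeeping and the observation $\mu_k=0$. The one place that requires genuine care is the case split in part (ii): one must choose the threshold on $\|s_k\|$ (a $\lipH$-dependent multiple of $\epsH$) so that the ``large-step'' bound in $\epsH^3$ comes out with a coefficient no smaller than the ``small-step'' coefficient $\tfrac{1}{2(1+2\lipH)}$, so that both cases yield the \emph{same} constant. I expect that matching of constants to be the main (albeit minor) obstacle; everything else is straightforward.
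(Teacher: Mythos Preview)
Your proposal is correct. Part (i) is identical to the paper's argument. In part (ii) you share the same opening steps (using $\mu_k=0$ to obtain $g_k+(H_k+\epsH I)s_k=0$, then \cref{eq:fub.3} to bound $\|g_{k+1}\|$ by $\tfrac{\lipH}{2}\|s_k\|^2+\epsH\|s_k\|$), but the finish differs. The paper treats the inequality $\tfrac{\lipH}{2}\|s_k\|^2+\epsH\|s_k\|-\|g_{k+1}\|\ge0$ as a quadratic in $\|s_k\|$, takes the positive root, and then invokes the technical inequality $-a+\sqrt{a^2+bt}\ge(-a+\sqrt{a^2+b})\min\{t,1\}$ from \cite[Lemma~17]{CWRoyer_SJWright_2018} with $(a,b,t)=(1,2\lipH,\|g_{k+1}\|\epsH^{-2})$ to obtain directly $\|s_k\|\ge(1+2\lipH)^{-1/2}\min\{\|g_{k+1}\|\epsH^{-1},\epsH\}$. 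Your case split on whether $\epsH+\tfrac{\lipH}{2}\|s_k\|$ lies below or above $\sqrt{1+2\lipH}\,\epsH$ is an elementary substitute for that external lemma: it is self-contained and avoids the quadratic formula, but requires a separate verification (which does go through, via $\sqrt{1+2\lipH}-1=2\lipH/(\sqrt{1+2\lipH}+1)$ and $\sqrt{1+2\lipH}+1\le2\sqrt{1+2\lipH}$) that the large-step constant is at least $\tfrac{1}{2(1+2\lipH)}$. The paper's route is slightly cleaner once one is willing to cite the lemma; yours is a bit more hands-on but has the advantage of needing no outside reference.
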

\begin{proof}
Part (i) follows from \cref{lem:exact.modeldec} and the definition of $\Kcalbs$, which imply that
  \begin{equation*}
    f_k - f_{k+1} \ge 
    \eta\left(m_k(x_k)-m_k(x_k+s_k)\right) 
    \geq \tfrac{\eta}{2}\epsH\|s_k\|^2 
    = \tfrac{\eta}{2}\epsH\delta_k^2.
  \end{equation*}

	For part (ii), from $k\in\Kcalis$ we know that $\|s_k\| <
        \delta_k$.  This fact along with \cref{eq.tr_opt_3}
        and~\cref{eq.tr_opt_1} imply that $\mu_k=0$ and $g_k + (H_k +
        \epsH I) s_k = 0$.  Now, with \cref{eq:fub.3}, we have
  \begin{align*}
    \|g_{k+1}\| &= \|g_{k+1}-g_k - (H_k + \epsH I) s_k \| \\
    &\le \|g_{k+1}- g_k - H_k s_k\| + \epsH \|s_k\| \le \tfrac{\lipH}{2}\|s_k\|^2 + \epsH \|s_k\|,
  \end{align*}
  which after rearrangement yields
  \[
  \tfrac{\lipH}{2}\|s_k\|^2+ \epsH \|s_k\| - \|g_{k+1}\| \ge 0.
  \]  
  Treating the left-hand side as a quadratic scalar function of
  $\|s_k\|$ implies that
  \[
  \|s_k\| 
  \ge \tfrac{-\epsH + \sqrt{\epsH^2 + 2 \lipH \|g_{k+1}\|}}{\lipH} 
  = \tfrac{-1+\sqrt{1 + 2 \lipH  \|g_{k+1}\|\epsH^{-2}}}{\lipH} \, \epsH.
  \]
  \revised{
    To put this lower bound into a slightly more useful form, we use
  \cite[Lemma~17 in Appendix A]{CWRoyer_SJWright_2018}, which states
  that for scalars $(a,b,t) \in (0,\infty) \times (0,\infty) \times
  [0,\infty)$, we have
    \begin{equation} \label{eq:ws8}
    -a + \sqrt{a^2 + bt} \geq (-a + \sqrt{a^2 + b}) \min\{t,1\}.
    \end{equation}
    By setting $a=1$, $b=2 \lipH$, and $t=\|g_{k+1}\|\epsH^{-2}$, we obtain
    }
    \begin{align*}
      \|s_k\|
      &\geq \left(\tfrac{-1+\sqrt{1+2 \lipH}}{\lipH}\right)
      \min\left\{ \|g_{k+1}\|\epsH^{-2},1\right\}\epsH \nonumber \\
      &= \left(\tfrac{2 \lipH}{\lipH(1+\sqrt{1+2 \lipH})}\right)
      \min\left\{ \|g_{k+1}\|\epsH^{-1},\epsH\right\}  \\
      & \ge \left(\tfrac{1}{\sqrt{1+2\lipH}}\right)
      \min\left\{ \|g_{k+1}\|\epsH^{-1},\epsH\right\}.
    \end{align*}
    Using this inequality in conjunction with $k\in\Scal$ and 
    \cref{lem:exact.modeldec} proves that
    \begin{align*} 
      f_k - f(x_k+s_k) 
      &\geq \eta\left(m_k(x_k)-m_k(x_k+s_k)\right)
      \geq \tfrac{\eta}{2}\epsH\|s_k\|^2 \nonumber \\
      &\geq \tfrac{\eta}{2(1+2\lipH)}
      \min\left\{\|g_{k+1}\|^2\epsH^{-1},\epsH^3\right\},
    \end{align*}
    which completes the proof for part (ii). 
\end{proof}

We now bound the number of successful iterations before termination.

\begin{lemma} \label{lem:exact.wccsuccc}
  The number of successful iterations performed by \cref{alg:exact} 
  before an $(\epsg,\epsH)$-stationary point is reached satisfies
  \begin{equation} \label{eq:exactwccitssucc}
      |\Scal| \leq \left\lfloor\mathcal{C}_\cS\max\{\epsH^{-1},
  \epsg^{-2}\epsH,\epsH^{-3}\} \right\rfloor + 1,
  \end{equation}
  where
  \begin{equation} \label{eq:exactwccitsCc}
    \mathcal{C_\cS} := \tfrac{4(f_0 - \flow)}{\eta}
    \max\left\{\tfrac{1}{\delta_0^2},\tfrac{\lipH^2}{9\gamma_1^2(1-\eta)^2},1+2\lipH\right\}.
  \end{equation}
\end{lemma}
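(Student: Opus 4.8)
The plan is to bound $|\Scal|$ by accumulating the objective decrease produced by successful iterations and comparing with the total available decrease $f_0-\flow$. Since $x_{k+1}=x_k$ (hence $f_{k+1}=f_k$) on every unsuccessful iteration, and $\{f_k\}$ is nonincreasing and bounded below under \cref{assum:f}, a telescoping argument gives $\sum_{k\in\Scal}(f_k-f_{k+1})\le f_0-\flow$. It therefore suffices to show that all but a controlled number of iterations in $\Scal$ each produce a decrease of at least the fixed threshold
\[
  \Delta := \tfrac{\eta}{2}\min\Bigl\{\delta_0^2,\ \tfrac{9\gamma_1^2(1-\eta)^2}{\lipH^2},\ \tfrac{1}{1+2\lipH}\Bigr\}\min\{\epsH,\ \epsg^2\epsH^{-1},\ \epsH^3\} = \tfrac{2(f_0-\flow)}{\mathcal{C}_\cS}\min\{\epsH,\ \epsg^2\epsH^{-1},\ \epsH^3\},
\]
because any family of iterations each yielding decrease at least $\Delta$ has cardinality at most $(f_0-\flow)/\Delta = \tfrac12\mathcal{C}_\cS\max\{\epsH^{-1},\epsg^{-2}\epsH,\epsH^{-3}\}$.

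Next I would split $\Scal$ into $\cSleq := \Scal\cap\Kcalbs$, $\cSgg := \{k\in\Scal\cap\Kcalis:\|g_{k+1}\|>\epsg\}$, and $\cSgleq := \{k\in\Scal\cap\Kcalis:\|g_{k+1}\|\le\epsg\}$. For $k\in\cSleq$, part~(i) of \cref{lem:exact.actualdec} with the radius bound $\delta_k\ge\delta_{\min}$ of \cref{lem:exact:delta-lb} gives $f_k-f_{k+1}\ge\tfrac{\eta}{2}\epsH\delta_{\min}^2$, and the elementary inequality $\min\{a\epsH,b\epsH^3\}\ge\min\{a,b\}\min\{\epsH,\epsH^3\}$ (for any $a,b>0$) shows this is at least $\Delta$. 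For $k\in\cSgg$, part~(ii) of \cref{lem:exact.actualdec} together with $\|g_{k+1}\|>\epsg$ gives $f_k-f_{k+1}\ge\tfrac{\eta}{2(1+2\lipH)}\min\{\epsg^2\epsH^{-1},\epsH^3\}\ge\Delta$. Since $\cSleq$ and $\cSgg$ are disjoint and these decreases occur at distinct iterations, the telescoping bound yields $|\cSleq|+|\cSgg|\le\tfrac12\mathcal{C}_\cS\max\{\epsH^{-1},\epsg^{-2}\epsH,\epsH^{-3}\}$.

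The crux is to control $|\cSgleq|$, whose members need not produce a large decrease since the bound in part~(ii) of \cref{lem:exact.actualdec} degrades when $\|g_{k+1}\|$ is small; I claim $|\cSgleq|\le|\cSleq|+1$. Fix $k\in\cSgleq$. At iteration $k+1$ the condition $\|g_{k+1}\|\le\epsg$ triggers computation of $\lambda_{k+1}$: if $\lambda_{k+1}\ge-\epsH$ the algorithm returns $x_{k+1}$, which accounts for at most one such $k$ (termination occurs once); otherwise $\lambda_{k+1}<-\epsH$, so $H_{k+1}+\epsH I$ is not positive semidefinite, which by \cref{eq.tr_opt_2} and \cref{eq.tr_opt_3} forces $\mu_{k+1}>0$ and hence $\|s_{k+1}\|=\delta_{k+1}$, i.e. $k+1\in\Kcalbs$. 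While iterations stay unsuccessful the iterate (hence $g$, $H$, and $\lambda_{\min}$) is unchanged, so the same argument keeps every step on the boundary while the radius contracts by $\gamma_1\in(0,1)$ at each unsuccessful iteration; since \cref{lem:exact:delta-lb} bounds the radius below by $\delta_{\min}>0$ on all iterations in $\Kcal$, this run of unsuccessful boundary iterations is finite and ends with a successful boundary iteration, an element of $\cSleq$. Distinct $k\in\cSgleq$ begin runs with distinct terminal successful iterations (a later interior successful iteration cannot lie inside an earlier run, which consists of unsuccessful boundary iterations followed by one successful boundary iteration), so the map sending each non-terminating $k\in\cSgleq$ to that terminal iteration is injective into $\cSleq$, whence $|\cSgleq|\le|\cSleq|+1$.

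Putting the pieces together, $|\Scal| = |\cSleq|+|\cSgg|+|\cSgleq| \le 2|\cSleq|+|\cSgg|+1 = |\cSleq| + (|\cSleq|+|\cSgg|) + 1 \le \mathcal{C}_\cS\max\{\epsH^{-1},\epsg^{-2}\epsH,\epsH^{-3}\}+1$, using $|\cSleq| \le |\cSleq|+|\cSgg| \le \tfrac12\mathcal{C}_\cS\max\{\epsH^{-1},\epsg^{-2}\epsH,\epsH^{-3}\}$; since $|\Scal|\in\N$, this gives the stated bound with the floor. The main obstacle is the handling of $\cSgleq$: the key realization is that a small gradient at a non-terminating iterate forces a direction of negative curvature, hence a boundary step, after which \cref{lem:exact:delta-lb} cuts off the ensuing run of unsuccessful iterations; without this, one could only bound $|\cSgleq|$ via a weaker per-step decrease and would not recover the constant $\mathcal{C}_\cS$ in \cref{eq:exactwccitsCc}.
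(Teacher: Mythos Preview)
Your proof is correct and follows essentially the same strategy as the paper: split $\Scal$ into three parts, show two of them yield per-iteration decrease at least $\Delta$, and bound the third by an injection into one of the first two (plus one for possible termination). The only real difference is the choice of partition: the paper splits according to whether $\|g_k\|$ and $\|g_{k+1}\|$ exceed $\epsg$, whereas you split according to $\Kcalbs$ versus $\Kcalis$ together with $\|g_{k+1}\|$. This makes your bounds for the ``good'' sets slightly more direct (\cref{lem:exact.actualdec}(i) and (ii) apply immediately), at the cost of a slightly longer injection argument for $\cSgleq$---you must argue that the next successful iterate is a boundary step via $\lambda<-\epsH$, whereas the paper simply notes that the next successful iterate has $\|g\|\le\epsg$ and hence lies in its $\cSleq$ by definition. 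Both routes use the same underlying observation (small gradient without termination forces negative curvature and hence a boundary step) and arrive at the same constant $\mathcal{C}_\cS$.
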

\begin{proof}
  The successful iterations may be written as 
  $\cS =  \cSleq \cup \cSgg \cup \cSgleq$ where
  \begin{align*}
    \cSleq &:= \{ k\in\cS: \|g_k\| \leq \epsg\}, \\
    \cSgg &:= \{ k\in\cS: \|g_k\| > \epsg \ 
    \text{and} \ \|g_{k+1}\| > \epsg \}, \\
    \text{and} \ \ 
    \cSgleq &:= \{ k\in\cS: \|g_k\|>\epsg \ 
    \text{and} \ \|g_{k+1}\| \leq \epsg \}.
  \end{align*}
  
  We first bound $|\cSleq \cup \cSgg|$, for which we will make use of the constant
    \begin{equation}\label{def:c}
  		c:= \tfrac{\eta}{2}\min\left\{
  		\delta_0^2,
		\tfrac{9\gamma_1^2(1-\eta)^2}{\lipH^2}, 
  		\tfrac{1}{1+2\lipH}\right\}.
  \end{equation}
  For $k \in \cSleq$, the fact
  that the algorithm has not yet terminated implies (see \cref{set:K})
  that $\lambda_{k} < -\epsH$.  By \cref{eq.tr_opt}, it
    follows that $\mu_k > 0$ and $\|s_k\|=\delta_k$, and thus $k \in
    \Kcalbs$.  Thus, for $k\in\cSleq$, \cref{lem:exact.actualdec}(i), \cref{lem:exact:delta-lb}, and \eqref{def:c} imply that
  \begin{align}
    f_k-f_{k+1} 
    &\geq \tfrac{\eta}{2}\epsH\delta_k^2  
    \geq \tfrac{\eta}{2}\min\left\{\delta_0^2\epsH,
    \tfrac{9\gamma_1^2(1-\eta)^2}{\lipH^2} \epsH^3\right\} \ge
     c \min \left\{ \epsH, \epsH^3 \right\}.\label{dec:S1}
  \end{align}
  Now consider $k\in\cSgg$.  Since in this case either of the cases in
  \cref{lem:exact.actualdec} may apply, one can only conclude
  that, for each $k\in\cSgg$, the following bound holds:
  \begin{align*}
    f_k-f_{k+1} 
    &\geq \tfrac{\eta}{2}\min\left\{\delta_k^2\epsH, 
    \left(\tfrac{\|g_{k+1}\|^2}{1+2\lipH}\right)\epsH^{-1}, 
    \left(\tfrac{1}{1+2\lipH}\right)\epsH^3 \right\}.
  \end{align*}
  Combining this with the definition of $\cSgg$, 
  the lower bound on $\delta_k$ in~\cref{lem:exact:delta-lb}, and the
  definition  of $c$ in \cref{def:c}, it follows that
  \begin{equation}\label{dec:S2}
  		f_{k}-f_{k+1} 
  		\geq  c\min\left\{\epsH,\epsg^2\epsH^{-1},\epsH^3\right\}.
  \end{equation}    
  To bound $|\cSleq \cup \cSgg|$, we sum the objective function
  decreases obtained over all such iterations,
  which with~\cref{assum:f} and the monotonicity of $\{f_k\}$ gives
  $$
    f_0 - \flow 
    \ge \sum_{k\in\Kcal} (f_k - f_{k+1}) 
    \ge \sum_{k \in \cSleq} (f_k - f_{k+1}) +
    \sum_{k \in \cSgg} (f_k - f_{k+1}).
  $$  
  Combining this inequality with~\cref{dec:S1} 
  and~\cref{dec:S2} shows that
  \begin{align*}
		f_0 - \flow 
		&\geq \sum_{k \in \cSleq} c\min\{\epsH,\epsH^3\} + 
    \sum_{k \in \cSgg} c\min\{\epsH,\epsg^2\epsH^{-1},\epsH^3 \} \\
		&\geq c(|\cSleq| + |\cSgg|) \min\{\epsH,\epsg^2\epsH^{-1},\epsH^3\},
  \end{align*}
  from which it follows that
  \begin{equation} \label{bounds-first2}
   |\cSleq| +   |\cSgg| \leq \left(\tfrac{f_0 - \flow}{c}\right)\max\{\epsH^{-1},\epsg^{-2}\epsH,\epsH^{-3}\}.
  \end{equation}
  
  Next, let us consider the set $\cSgleq$. Since $k\in\cSgleq$ means
  $\|g_{k+1}\|\leq \revised{\epsg}$, the index corresponding to the next
  successful iteration (if one exists) must be an element of the index
  set~$\cSleq$.  This implies that $ |\cSgleq| \leq |\cSleq| + 1 $,
  where the $1$ accounts for the possibility that the last successful
  iteration (prior to termination) has an index
  in~$\cSgleq$. Combining this bound with~\cref{bounds-first2} yields
  \begin{equation*}
    |\cS|
    = |\cSleq| + |\cSgg| + |\cSgleq|
    \leq \tfrac{2(f_0-\flow)}{c}\max\{\epsH^{-1},\epsg^{-2}\epsH,\epsH^{-3}\} + 1,
  \end{equation*}
  which completes the proof when we substitute for $c$ from \cref{def:c}.
\end{proof}


We now bound the number of unsuccessful iterations.

\begin{lemma} \label{lem:exact.wccunsuccsucc}
The number of unsuccessful iterations that occur before an
$(\epsg,\epsH)$-stationary point is reached \revised{is either zero or
  else} satisfies
  \begin{equation} \label{eq:exact.wccunsuccsucc}
    |\Ucal| \leq 
    \left\lfloor 1+
    \log_{\gamma_1}\left(\tfrac{3(1-\eta)}{\lipH \delta_{\max}}\right) + 
    \log_{\gamma_1}\left(\epsH\right)
    \right\rfloor |\Scal|.
  \end{equation}
\end{lemma}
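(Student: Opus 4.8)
The plan is to bound the length of each maximal run of consecutive unsuccessful iterations and then to bound the number of such runs by $|\Scal|$. First I would record two facts about the trust-region radius. A straightforward induction, using $\delta_0 \le \delta_{\max}$, $\gamma_2 \ge 1$, and $\gamma_1 \in (0,1)$, shows $\delta_k \le \delta_{\max}$ for all $k$. Moreover, if $k \in \Ucal$ then $\|s_k\| \le \delta_k$ and $\delta_{k+1} = \gamma_1\|s_k\|$, so $\delta_{k+1} \le \gamma_1\delta_k$. Hence, along a maximal run of consecutive unsuccessful iterations $\ell, \ell+1, \dots, \ell+t-1$, the radius contracts geometrically from $\delta_\ell \le \delta_{\max}$, giving $\delta_{\ell+t-1} \le \gamma_1^{\,t-1}\delta_{\max}$. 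Since $\ell+t-1 \in \Ucal \subseteq \Kcal$, \cref{lem:exact:delta-lb} gives $\delta_{\ell+t-1} > 3(1-\eta)\epsH/\lipH$, so $\gamma_1^{\,t-1} > 3(1-\eta)\epsH/(\lipH\delta_{\max})$. Taking $\log_{\gamma_1}$ of both sides---which reverses the inequality because $\gamma_1 \in (0,1)$---yields $t - 1 < \log_{\gamma_1}(3(1-\eta)/(\lipH\delta_{\max})) + \log_{\gamma_1}(\epsH)$, and since $t$ is a positive integer this gives $t \le \lfloor 1 + \log_{\gamma_1}(3(1-\eta)/(\lipH\delta_{\max})) + \log_{\gamma_1}(\epsH)\rfloor$; call this bound $N_U$.

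Next I would bound the number of runs. The key observation is that the algorithm never terminates at the iteration immediately following an unsuccessful one: an unsuccessful iteration $k$ leaves $x_{k+1} = x_k$, hence $g_{k+1} = g_k$ and $H_{k+1} = H_k$, and $k \in \Kcal$ means (see \cref{set:K}) that $\|g_k\| > \epsg$ or $\lambda_{\min}(H_k) < -\epsH$, so the same holds at iteration $k+1$ and the test of line~\ref{step:terminate} fails. Combined with the length bound just derived (so that every run is finite), this shows that every maximal run of unsuccessful iterations is immediately followed by a completed iteration which, by maximality of the run, is not unsuccessful and is therefore successful. Assigning to each run the successful iteration that follows it is injective, so the number of maximal runs of unsuccessful iterations is at most $|\Scal|$.

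Assembling the pieces: $|\Ucal|$ is the sum of the lengths of the maximal runs of unsuccessful iterations, each such length is at most $N_U$, and there are at most $|\Scal|$ runs, so $|\Ucal| \le N_U|\Scal|$, which is exactly \cref{eq:exact.wccunsuccsucc}. The alternative ``is either zero'' simply disposes of the degenerate case $\Ucal = \emptyset$; note that when $\Ucal \ne \emptyset$ the run-counting argument already forces $\Scal \ne \emptyset$, so the displayed bound is never vacuous. I expect the only delicate points to be getting the inequality direction right when taking $\log_{\gamma_1}$ with a base in $(0,1)$ and the integrality step that introduces the floor; everything else is routine bookkeeping of the radius-update rule.
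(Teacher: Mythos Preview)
Your proof is correct and follows essentially the same approach as the paper: bound the length of each run of consecutive unsuccessful iterations via the geometric contraction of $\delta_k$ combined with the lower bound from \cref{lem:exact:delta-lb}, then observe that the iteration following an unsuccessful one cannot trigger termination (since $x_{k+1}=x_k$), so every run is followed by a distinct successful iteration. The only difference is organizational: the paper treats the initial run before $k_1$ separately from the runs between consecutive successful indices, whereas you handle all maximal runs uniformly by invoking $\delta_\ell \le \delta_{\max}$ at the start of any run---a slightly cleaner packaging of the same argument.
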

\begin{proof}
\revised{If the number of successful iterations is zero, then the initial point must be $(\epsg,\epsH)$-stationary and there are no unsuccessful iterations. Hence, let us proceed under the assumption that $|\Scal| \geq 1$.}
Let us denote the successful iteration indices as
$\{k_1,\dots,k_{|\Scal|}\} := \Scal$.  \revised{If the number of unsuccessful iterations is zero, then there is nothing left to prove, so we may proceed under the assumption that there is at least one unsuccessful iteration.  Thus, we may consider arbitrary $i \in
  \{1,\dots,|\Scal|-1\}$ such that $k_{i+1} - k_i - 1 \ge 1$, that is,
  there is at least one unsuccessful iteration between iterations
  $k_i$ and $k_{i+1}$. We seek a bound on $k_{i+1} - k_i - 1$.} From the update formulas for the trust-region radius,
one finds for all \revised{unsuccessful iteration indices} $l \in
\{k_i+1,\dots,k_{i+1}-1\}$ that \revised{$\delta_l = \gamma_1
  \|s_{l-1}\| \le \gamma_1 \delta_{l-1}$, so}
\begin{equation} \label{eq:exact.wccunsuccdeltarule}
  \delta_l \le 
  \min\{\gamma_2\delta_{k_i},\delta_{\max}\} \gamma_1^{l-k_i-1} \le \delta_{\max} \gamma_1^{l-k_i-1}.
\end{equation}
\revised{Moreover, for any unsuccessful iteration index $l \in
  \{k_i+1,\dots,k_{i+1}-1\}$ we have from \cref{lem:exact:delta-lb}}
  that $\delta_l >
3(1-\eta)\epsH/\lipH$.  Thus, for such $l$,
\cref{eq:exact.wccunsuccdeltarule} implies that
\begin{equation*}
  \tfrac{3(1-\eta)}{\lipH}\epsH
  <  \delta_{\max}\gamma_1^{l-k_i-1}
  \implies
  l-k_i-1 \le \log_{\gamma_1}\left(
  \tfrac{3(1-\eta)\epsH}{\lipH\delta_{\max}}\right).
\end{equation*}
Consequently, using the specific choice $l = k_{i+1}-1$, one finds that
\[
k_{i+1}-k_i-1 \leq 1 + 
\log_{\gamma_1}\left(\tfrac{3(1-\eta)}{\lipH\delta_{\max}}\right) +
\log_{\gamma_1}(\epsH),
\]
\revised{
and because the left-hand side is an integer, we have
\begin{equation}\label{eq:exact.wccunsuccboundiip1}
  k_{i+1}-k_i-1 \leq \left\lfloor 1 + 
  \log_{\gamma_1}\left(\tfrac{3(1-\eta)}{\lipH\delta_{\max}}\right) +
  \log_{\gamma_1}(\epsH) \right\rfloor.
\end{equation}
Since $i$ was chosen arbitrarily such that $k_{i+1}-k_i-1
\ge 1$, the right-hand side in
\eqref{eq:exact.wccunsuccboundiip1} is at least 1 if there are {\em
  any} unsuccessful iterations between iteration $k_1$ and
$k_{|\Scal|}$.}

\revised{Consider now the first successful iteration $k_1$.  We seek a bound on the number of unsuccessful iterations prior to iteration $k_1$.  If there are no such unsuccessful iterations, then there is nothing left to prove; hence, we may assume $k_1 \ge 1$.}  We have that $\delta_{k_1} \revised{\le}
\gamma_1^{k_1} \delta_0 \le \gamma_1^{k_1} \delta_{\max}$, and
\revised{thus from Lemma~\ref{lem:exact:delta-lb} it follows that
  \[
  \min \left\{ \delta_0, \tfrac{3\gamma_1 (1-\eta)}{\lipH} \epsH
  \right\} \le \delta_{k_1} \le \gamma_1^{k_1} \delta_0 \le
  \gamma_1^{k_1} \delta_{\max}.
  \]
From the first two of these inequalities and the facts that $\gamma_1 \in (0,1)$ and $k_1 \geq 1$, the ``$\min$'' on the left-hand side is {\em
  not} achieved by $\delta_0$, so we have
\[
\tfrac{3\gamma_1 (1-\eta)}{\lipH} \epsH \le 
\gamma_1^{k_1} \delta_{\max},
\]
which, when we take into account that $k_1$ is an integer, leads to
\begin{equation} \label{eq:xj3}
k_1 \le \left\lfloor 1 + \log_{\gamma_1}\left(\tfrac{3(1-\eta)}{\lipH\delta_{\max}}\right) +
  \log_{\gamma_1}(\epsH) \right\rfloor.
\end{equation}
Under our assumption that $k_1 \ge 1$, the
right-hand side of \eqref{eq:xj3} is at least $1$.}

Since the iteration immediately prior to termination is $k_{|\Scal|}$
(except in the trivial case in which termination occurs at \revised{the initial point}), we have
\begin{equation} \label{eq:xj4}
| \Ucal| = k_1 + \sum_{i=1}^{|\Scal|-1} (k_{i+1} - k_i - 1).
\end{equation}
\revised{If $\Ucal \neq \emptyset$, we have that $k_1 \ge 1$ and/or at
  least one of the terms in the summation is at least $1$. We can in
  this case bound {\em every} term on the right-hand side of
  \eqref{eq:xj4} by the right-hand sides of
  \cref{eq:exact.wccunsuccboundiip1} and \cref{eq:xj3} to deduce the
  result.}
\end{proof}

The main result for iteration complexity of Algorithm~\ref{alg:exact}
may now be proved.

\begin{theorem} \label{theo:exact.wccorder}
  Under \cref{assum:f}, the number of successful iterations $($and
  objective gradient and Hessian evaluations$)$ performed by
  \cref{alg:exact} before an $(\epsg,\epsH)$-stationary point is
  obtained satisfies
  \begin{equation} \label{eq:exact.wccordersuccits}
    |\Scal| = \mathcal{O} \left( \max\{\epsH^{-3},\epsH^{-1},\epsg^{-2}\epsH\}\right)
  \end{equation}
  and the total number of iterations $($and objective function
  evaluations$)$ performed before such a point is obtained
  satisfies
  \begin{equation} \label{eq:exact.wccorderits}
    \begin{aligned}
      |\Kcal| 
      &= \mathcal{O}\left(
      \log_{1/\gamma_1}(\epsH^{-1}) \max\{\epsH^{-3},\epsH^{-1},\epsg^{-2}\epsH\}
      \right).
    \end{aligned}
  \end{equation}
\end{theorem}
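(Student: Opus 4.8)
The plan is to assemble \cref{theo:exact.wccorder} directly from \cref{lem:exact.wccsuccc} and \cref{lem:exact.wccunsuccsucc}, which already carry out all of the analytical work; what remains is to extract the dependence on $\epsg$ and $\epsH$ and to reconcile the iteration counts with the claimed evaluation counts. For \eqref{eq:exact.wccordersuccits}, I would observe that the constant $\mathcal{C}_\cS$ appearing in \eqref{eq:exactwccitsCc} depends only on $f_0-\flow$, $\eta$, $\delta_0$, $\gamma_1$, and $\lipH$, none of which involves $\epsg$ or $\epsH$; hence dropping the floor and the additive constant in \eqref{eq:exactwccitssucc} yields $|\Scal| = \mathcal{O}(\max\{\epsH^{-3},\epsH^{-1},\epsg^{-2}\epsH\})$ at once. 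To justify the parenthetical claim about gradient and Hessian evaluations, I would note that on an unsuccessful iteration $x_{k+1}=x_k$, so that $g_{k+1}=g_k$ and $H_{k+1}=H_k$ need not be recomputed; consequently the number of gradient and Hessian evaluations equals the number of distinct iterates, which is at most $|\Scal|+1$, hence of the same order.

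For \eqref{eq:exact.wccorderits}, I would write $|\Kcal| = |\Scal| + |\Ucal|$ and invoke \cref{lem:exact.wccunsuccsucc}. When $\Ucal\neq\emptyset$, that lemma bounds $|\Ucal|$ by $(1 + \log_{\gamma_1}(3(1-\eta)/(\lipH\delta_{\max})) + \log_{\gamma_1}(\epsH))\,|\Scal|$ (the floor only decreasing this quantity); since $\gamma_1\in(0,1)$ we have $\log_{\gamma_1}(\epsH) = \log_{1/\gamma_1}(\epsH^{-1})$, which is positive once $\epsH$ is small, while the remaining terms form a fixed constant, so $|\Ucal| = \mathcal{O}(\log_{1/\gamma_1}(\epsH^{-1})\,|\Scal|)$, a bound with which the degenerate case $\Ucal=\emptyset$ is trivially consistent. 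Substituting the bound on $|\Scal|$ from the first part then gives $|\Kcal| = \mathcal{O}(\log_{1/\gamma_1}(\epsH^{-1})\max\{\epsH^{-3},\epsH^{-1},\epsg^{-2}\epsH\})$. For the parenthetical count of function evaluations, I would note that each iteration requires exactly one evaluation of $f(x_k+s_k)$ to form $\rho_k$ in \eqref{eq.rho}, so the number of function evaluations is $\mathcal{O}(|\Kcal|)$ and obeys the same bound.

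There is no genuine obstacle here: the estimates in the two preceding lemmas are already sharp enough that the theorem reduces to bookkeeping. The only points needing a little care are the sign and base conventions for the logarithm---namely that $\log_{\gamma_1}(\epsH)$ is positive and equals $\log_{1/\gamma_1}(\epsH^{-1})$ once $\epsH$ is small---and the degenerate cases in which $|\Scal|=0$, meaning the initial iterate is already $(\epsg,\epsH)$-stationary, or $\Ucal=\emptyset$; each of these is consistent with, and indeed dominated by, the asserted $\mathcal{O}$ bounds.
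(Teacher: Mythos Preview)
Your proposal is correct and follows the same approach as the paper: the paper's own proof simply cites \cref{lem:exact.wccsuccc} for \eqref{eq:exact.wccordersuccits} and combines \cref{lem:exact.wccsuccc}, \cref{lem:exact.wccunsuccsucc}, and the identity $\log_{\gamma_1}(\epsH)=\log_{1/\gamma_1}(\epsH^{-1})$ for \eqref{eq:exact.wccorderits}. Your additional remarks about the evaluation counts and the degenerate cases are reasonable elaborations of the parenthetical claims that the paper leaves implicit.
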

\begin{proof}
  Formula~\cref{eq:exact.wccordersuccits} follows
  from~\cref{lem:exact.wccsuccc}.
  Formula~\cref{eq:exact.wccorderits} follows from
  \cref{lem:exact.wccsuccc}, \cref{lem:exact.wccunsuccsucc}, and the
  fact that $\log_{\gamma_1}(\epsH) =
  \log_{1/\gamma_1}(\epsH^{-1})$. 
\end{proof}

If one chooses $\epsH=\epsg^{1/2}$ in~\cref{eq:exact.wccordersuccits}
and~\cref{eq:exact.wccorderits} as well as any positive scalar
$\epsgbar\in\R$, then \cref{theo:exact.wccorder} implies that, for all
$\epsg\in(0,\epsgbar]$, one has
\[
|\Scal| = \mathcal{O}\left(\epsg^{-3/2}\right)
\ \ \text{and} \ \
|\Kcal| = \mathcal{O}\left(\epsg^{-3/2}\log_{1/\gamma_1}\left(\epsg^{-1/2}\right)\right) 
= \tcO\left(\epsg^{-3/2}\right)
\]
for the numbers of successful and total iterations, respectively.
These correspond to the results obtained for the line search method
in~\cite[Theorem~5,~Theorem~6]{CWRoyer_SJWright_2018}).

\section{Iterative methods for solving the subproblems inexactly}
\label{sec:tcg}



This section describes the algorithms needed to develop an
\emph{inexact} trust-region Newton method, which will be presented and
analyzed in Section~\ref{sec:inexact}. A truncated CG method for
computing directions of descent is discussed in
Section~\ref{subsec:tcg:classic} and an iterative algorithm for
computing directions of negative curvature is described in
Section~\ref{subsec:inexact:meo}.

\subsection{A truncated CG method}\label{subsec:tcg:classic}

We propose \cref{alg:etcg} as an appropriate iterative method for
approximately solving the trust-region subproblem
\begin{equation} \label{eq:inexact1:trsubpb}
  \min_{s \in \R^n}  \ g^T s + \tfrac{1}{2}s^T (H + 2\eps I) s 
  \ \ \mathrm{s.t.} \ \
  \|s\| \le \delta, 
\end{equation}
where $g \in \R^n$ is assumed to be a non-zero vector, $H \in \Sym^n$
is possibly indefinite, $\eps \in (0,\infty)$ plays the role of a
regularization parameter, and $\delta \in (0,\infty)$. \cref{alg:etcg}
is based on the CG method and builds on the Steihaug-Toint
approach~\cite{TSteihaug_1983,PhLToint_1981}. (The factor of 2 in the
regularization term in~\cref{eq:inexact1:trsubpb} is intentional.  For
consistency in the termination condition, the inexact trust-region
Newton method in Section~\ref{sec:inexact} employs a larger
regularization term than the exact method analyzed in
Section~\ref{sec:exact}.)

For the most part, \cref{alg:etcg} is identical to traditional
truncated CG.  For example, termination occurs in
line~\ref{step:return.outside} when the next CG iterate $y_{j+1}$
would lie outside the trust region, and we return $s$ as the largest
feasible step on the line segment connecting $y_j$ to $y_{j+1}$. In
this situation, we also set $\sout \gets \OKBinf$ to indicate that $s$
lies on the boundary of the trust-region constraint.

However, there are three key differences between \cref{alg:etcg} and
truncated~CG. First, the residual termination criterion in
line~\ref{step:res} enforces the condition
\begin{equation}\label{eq:cvcrittcg}
	\|(H + 2\eps I) s + g\| \; \le \; \tfrac{\zeta}{2} \min\{\|g\|,\eps\|s\|\},
\end{equation}
which is stronger than the condition traditionally used in truncated
CG (which typically has $\tfrac{\zeta}{2}\|g\|$ for the right-hand
side) and incorporates a criterion typical of Newton-type methods with
optimal
complexity~\cite{CCartis_NIMGould_PhLToint_2019,CWRoyer_SJWright_2018}
(which use $\eps\|s\|$ for the right-hand side).  If this criterion is
satisfied, then we return the current CG iterate as the step~(that is,
we set $s \gets y_{j+1}$) and indicate that $s$ lies in the interior
of the trust region and satisfies~the residual condition
\cref{eq:cvcrittcg} by setting $\sout \gets \OKIres$.


Second, traditional truncated CG terminates if a direction of
nonpositive curvature is encountered.  Line~\ref{step:negcurve_test}
of \cref{alg:etcg} triggers termination if a direction with curvature
less than or equal to $\eps$ is found for $H+2\eps I$, since this
condition implies that the curvature of $H$ along the same direction
is less than \revised{or equal to} $-\eps$. In this case, we return a step~$s$ obtained by
moving along the direction of negative curvature to the boundary of
the trust-region constraint, and return $\sout \gets \OKBneg$ to
indicate that $s$ lies on the boundary because a direction of negative
curvature was computed.

Third, unlike traditional truncated CG, which (in exact arithmetic)
requires up to a maximum of $\itmax = n$ iterations,
line~\ref{step:cap} allows for an alternative iteration limit to be
imposed. Regardless of which limit is used, if $\itmax$ iterations are
performed, \cref{alg:etcg} returns $s$ as the current CG iterate and
sets $\sout\gets\OKImax$.  This flag indicates that the maximum number
of iterations has been reached while remaining in the interior of the
trust region.

The lemma below motivates the alternative choice for $\itmax$ in
line~\ref{step:cap}.

\begin{lemma} \label{lem:cgits}
  Suppose $\eps I \prec H+2\eps I \preceq (M+2\eps) I$ for $M \in
  [\|H\|,\infty)$ and define
    \begin{equation} \label{eq:cgJe}
      \kappa(M,\eps) := (M+2\eps) / \eps \ \ \text{and} \ \ \Je(M,\eps,\zeta)
      := \tfrac{1}{2}\sqrt{\kappa(M,\eps)}
      \ln\left(4\kappa(M,\eps)^{3/2}/\zeta\right),
    \end{equation}		
    where $\zeta \in (0,1)$ is input to \cref{alg:etcg}.  If
    lines~\ref{step:ifbegin}--\ref{step:ifend} were simply to set
    $\itmax \gets \infty$, then \cref{alg:etcg} would terminate at
    either line~\ref{step:return.outside} or line~\ref{step:return.res}
    after a number of iterations (equivalently, matrix-vector
      products) equal to at most
    \begin{equation} \label{eq:cgits}
      \min\left\{n, \Je(M,\eps,\zeta) \right\} \; = \; 
      \min\left\{n, \tcO(\eps^{-1/2}) \right\}.
    \end{equation}
\end{lemma}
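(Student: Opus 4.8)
The plan is to recognize that, under the stated hypothesis $\eps I \prec H+2\eps I \preceq (M+2\eps)I$, \cref{alg:etcg} with $\itmax$ reset to $\infty$ is nothing but the classical linear conjugate gradient method applied to the symmetric positive-definite system $As=-g$ with $A := H+2\eps I$, started from $y_0 = 0$. I would then combine the standard $A$-norm convergence estimate for CG with the spectral bounds on $A$ to show that the residual test \cref{eq:cvcrittcg} must be satisfied within $\Je(M,\eps,\zeta)$ iterations, unless the trust-region boundary is reached first.

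First I would argue that, because $A \succ \eps I$, every direction $p$ generated by the method has $p^T A p > \eps\|p\|^2$, so the nonpositive/small-curvature test in line~\ref{step:negcurve_test} can never trigger; hence, with $\itmax \gets \infty$, the only possible exits are line~\ref{step:return.outside} (the prospective iterate leaves the trust region) and line~\ref{step:return.res} (the residual criterion holds). Since classical CG on an $n\times n$ positive-definite system terminates with the exact solution after at most $n$ steps, this accounts for the ``$\min\{n,\cdot\}$'' in \cref{eq:cgits}, and it remains only to bound the number of iterations needed to trigger line~\ref{step:return.res} under the assumption that every iterate $y_j$ stays strictly inside the trust region.

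Next I would invoke two standard facts about CG from $y_0=0$. Writing $y_\star := -A^{-1}g$, $e_j := y_\star - y_j$, and $\kappa := \kappa(M,\eps) = (M+2\eps)/\eps \ge \lambda_{\max}(A)/\lambda_{\min}(A)$: (a) the error obeys $\|e_j\|_A \le 2\big((\sqrt{\kappa}-1)/(\sqrt{\kappa}+1)\big)^j\|e_0\|_A$; and (b) the iterate norms are strictly increasing, $\|y_1\| < \|y_2\| < \cdots$ (see, e.g., \cite{TSteihaug_1983}, where this is used in the Steihaug--Toint analysis and holds precisely because $A\succ 0$). From (a), together with $r_j = Ae_j$, $\|r_j\|^2 \le \lambda_{\max}(A)\|e_j\|_A^2$, and $\|e_0\|_A^2 = g^TA^{-1}g \le \|g\|^2/\eps$, I obtain the residual bound $\|r_j\| \le 2\sqrt{\kappa}\,\big((\sqrt{\kappa}-1)/(\sqrt{\kappa}+1)\big)^j\|g\|$. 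From (b) and the explicit first CG step $y_1 = -(\|g\|^2/(g^TAg))\,g$, I get $\|y_j\| \ge \|y_1\| = \|g\|^3/(g^TAg) \ge \|g\|/(M+2\eps)$ for all $j\ge 1$, so $\min\{\|g\|,\eps\|y_j\|\} \ge \|g\|/\kappa$. Consequently, a sufficient condition for \cref{eq:cvcrittcg} at step $j$ is $2\sqrt{\kappa}\,\big((\sqrt{\kappa}-1)/(\sqrt{\kappa}+1)\big)^j \le \zeta/(2\kappa)$, i.e. $\big((\sqrt{\kappa}-1)/(\sqrt{\kappa}+1)\big)^j \le \zeta/(4\kappa^{3/2})$; using the elementary bound $(\sqrt{\kappa}-1)/(\sqrt{\kappa}+1) \le e^{-2/\sqrt{\kappa}}$ and taking logarithms, this holds once $j \ge \tfrac12\sqrt{\kappa}\ln(4\kappa^{3/2}/\zeta) = \Je(M,\eps,\zeta)$. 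The order estimate $\Je(M,\eps,\zeta) = \tcO(\eps^{-1/2})$ then follows since $\sqrt{\kappa} = \cO(\eps^{-1/2})$ and $\ln(4\kappa^{3/2}/\zeta) = \cO(\log(1/\eps))$ with $M,\zeta$ treated as constants.

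I expect the main obstacle to be the bookkeeping around the two-sided minimum $\min\{\|g\|,\eps\|s\|\}$ in the stopping test: the $\|g\|$ side is immediate from the CG rate, but the $\eps\|s\|$ side forces the lower bound $\|y_j\| \gtrsim \|g\|/\lambda_{\max}(A)$, which relies on monotonicity of the CG iterate norms — a property that is available here only because $A$ is positive definite (it fails in the general indefinite trust-region setting). A secondary point requiring care is aligning the iteration index of \cref{alg:etcg}, where the residual test is checked for the \emph{prospective} iterate $y_{j+1}$, with the index appearing in the classical CG convergence bound, so that the final count is stated correctly.
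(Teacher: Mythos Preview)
Your proposal is correct and follows essentially the same approach as the paper, which simply observes that the negative-curvature exit cannot trigger when $H+2\eps I\succ\eps I$ and then defers the iteration bound to \cite[proof of Lemma~11]{CWRoyer_SJWright_2018}; you are spelling out precisely that deferred argument (classical CG $A$-norm rate, conversion to a residual bound via the eigenvalue sandwich, and the Steihaug monotonicity $\|y_1\|<\|y_2\|<\cdots$ to lower-bound $\eps\|y_j\|$ by $\|g\|/\kappa$). The off-by-one between the algorithm's test on $r_{j+1}$ and your bound on $\|r_j\|$ that you flag is real but harmless for the stated conclusion.
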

\begin{proof}
  Since $H + 2\eps I \succ \eps I$ by assumption, a direction of
  curvature less than~$\eps$ for $H+2\eps I$ does not exist, meaning
  that termination in line~\ref{step:return.nc} cannot occur.  It
  follows from the fact that $\eps I \prec H + 2\eps I \preceq
  (M+2\eps) I$ and~\cite[proof of Lemma 11]{CWRoyer_SJWright_2018}
  that CG
  would reach an iterate satisfying~\cref{eq:cvcrittcg}---so that
  termination in line~\ref{step:return.res} would occur---in at most
  the number of iterations given by~\cref{eq:cgits}.  Of course, if
  termination occurs earlier in line~\ref{step:return.outside}, the
  bound~\cref{eq:cgits} still holds.
\end{proof}

When employing the trust-region method of Section~\ref{sec:inexact}
for minimizing~$f$, \cref{alg:etcg} is invoked without knowing whether
or not $H + 2\eps I \succ \eps I$.  Nevertheless, \cref{lem:cgits}
allows us to make the following crucial observation.

\begin{lemma}\label{lem:neg-exists}
If the iteration limit in \cref{alg:etcg} is exceeded (that
  is, termination occurs at line~\ref{step:return.itmax}), then $H
\nsucc -\epsilon I$.
\end{lemma}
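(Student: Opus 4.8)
The plan is to prove the contrapositive: if $H \succ -\eps I$, then the iteration limit is not exceeded, i.e., termination does not occur at line~\ref{step:return.itmax}. So I would begin by assuming $H \succ -\eps I$, which is equivalent to $H + 2\eps I \succ \eps I$. This is precisely the spectral lower bound hypothesis of \cref{lem:cgits}.

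**Invoking the iteration count from \cref{lem:cgits}.** Under the assumption $H + 2\eps I \succ \eps I$, I would set $M := \|H\|$ (so that, combined with the hypothesis, $\eps I \prec H + 2\eps I \preceq (M + 2\eps) I$, using $H \preceq \|H\| I$). \cref{lem:cgits} then tells us that \emph{if} the iteration cap were set to $\infty$, \cref{alg:etcg} would terminate at line~\ref{step:return.outside} or line~\ref{step:return.res} after at most $\min\{n, \Je(M,\eps,\zeta)\}$ iterations --- in particular, it would terminate via one of those two exits and never via line~\ref{step:return.nc} (no direction of curvature $\le \eps$ for $H + 2\eps I$ exists) and never via the iteration-limit exit with $\itmax = \infty$. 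The subtlety is that in the actual algorithm $\itmax$ is \emph{not} $\infty$: line~\ref{step:cap} may impose a finite cap. So I need to argue that whatever finite cap is imposed is at least as large as the bound in \cref{eq:cgits}, so that the run with the finite cap behaves identically to the run with $\itmax = \infty$ up through its termination point, and hence also exits at line~\ref{step:return.outside} or line~\ref{step:return.res} rather than at line~\ref{step:return.itmax}.

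**Comparing the imposed cap with $\Je(M,\eps,\zeta)$.** The key step --- and the one I expect to be the main obstacle --- is pinning down exactly what $\itmax$ is set to in lines~\ref{step:ifbegin}--\ref{step:ifend}/line~\ref{step:cap} of \cref{alg:etcg}, which is not shown in this excerpt, and checking that this value is $\ge \min\{n, \Je(M,\eps,\zeta)\}$ whenever it is finite. Presumably the alternative cap in line~\ref{step:cap} is set to something like $\min\{n, \ceil{\Je(\,\cdot\,)}\}$ using the available quantities (the upper bound $M$ being available, e.g., from \cref{eq:fub.4} as $L_g \ge \|H_k\|$, so $\Je$ can be evaluated without knowing the true norm). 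Granting that the cap is chosen to be at least $\min\{n,\Je(M,\eps,\zeta)\}$, the run with the actual (finite) cap and the run with $\itmax=\infty$ produce the same iterates until the run with $\itmax = \infty$ terminates, which by \cref{lem:cgits} happens at line~\ref{step:return.outside} or line~\ref{step:return.res} within $\min\{n,\Je(M,\eps,\zeta)\}$ iterations. Therefore the actual run also terminates at one of those lines, not at line~\ref{step:return.itmax}. This establishes the contrapositive and hence the lemma: if termination occurs at line~\ref{step:return.itmax}, then $H \nsucc -\eps I$.
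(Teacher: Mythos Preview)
Your proposal is correct and follows essentially the same route as the paper: both arguments hinge on \cref{lem:cgits} together with the fact that the cap set in line~\ref{step:cap} is precisely $\min\{n,\Je(M,\eps,\zeta)\}$ for the input $M\in[\|H\|,\infty)$. The paper argues in the direct form (if line~\ref{step:return.itmax} is reached, then $\itmax<n$ must have been set in line~\ref{step:cap}, and then \cref{lem:cgits} forces $H+2\eps I\nsucc\eps I$), whereas you phrase it as the contrapositive; the content is the same. One small clarification: there is no need to introduce your own $M:=\|H\|$---the $M$ appearing in line~\ref{step:cap} is the input to \cref{alg:etcg}, already assumed to satisfy $M\ge\|H\|$, so invoking \cref{lem:cgits} with that same $M$ makes the iteration bound and the cap coincide exactly, which is precisely what you need.
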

\begin{proof}
If $\itmax$ is set to $n$ in line~\ref{step:cap} or line~\ref{step:n},
then it would follow from standard CG theory that
\cref{alg:etcg} cannot reach line~\ref{step:return.itmax},
  because either $r_{j+1}=0$ for some $j<n$ (thus termination would
  have occurred at line~\ref{step:res}) or else one of the other
  termination conditions would have been activated before this point.
  Hence, $\itmax$ must have been set in line~\ref{step:cap} to some value less than $n$.  In this case, it follows from
\cref{lem:cgits}, the choice of $M$, and the choice of $\itmax$ in
line~\ref{step:cap} that $H + 2\eps I \nsucc \eps \revised{I}$.
\end{proof}

\floatname{algorithm}{Algorithm}
\begin{algorithm}[t!]
\caption{Truncated CG Method for the Trust-Region Subproblem}
\label{alg:etcg}
\begin{algorithmic}[1]
\STATE \textbf{Input:} Nonzero $g \in\R^n$; $H \in \Sym^n$; regularization parameter $\eps\in(0,\infty)$; trust-region radius $\delta\in(0,\infty)$; accuracy parameter $\zeta \in (0,1)$; flag $\capCG \in \{\true,\false\}$; and (if $\capCG = \true$) upper bound $M \in [\|H\|,\infty)$. 
\STATE \textbf{Output:} trial step $s$ and flag $\sout$ indicating termination type.
\smallskip
\hrule
\smallskip
\IF{$\capCG = \true$}\label{step:ifbegin}
\STATE Set $\itmax \leftarrow \min\left\{n,
\frac12 \sqrt{\kappa}\ln\left(4\kappa^{3/2}/\zeta\right) \right\}$ where $\kappa\leftarrow (M+2\eps)/2$. \label{step:cap}
\ELSE  
    \STATE Set $\itmax \gets n$.\label{step:n} 
\ENDIF\label{step:ifend}
\STATE Set $y_0 \leftarrow 0$, $r_0 \leftarrow g$, $p_0 \leftarrow -g$, 
and $j \leftarrow 0$. 
\WHILE{$j < \itmax$}
\IF{$p_j^T (H + 2\eps I) p_j \le \eps\|p_j\|^2$}\label{step:negcurve_test}
\STATE Compute $\sigma \ge 0$ such that $\|y_j+\sigma\,p_j\|=\delta$.
\STATE \textbf{return} $s \gets y_j+\sigma\,p_j$ and $\sout \gets \OKBneg$. \label{step:return.nc}
\ENDIF
\STATE Set $\alpha_j \leftarrow \|r_j\|^2 / (p_j^T (H + 2\eps I) p_j)$.
\STATE Set $y_{j+1} \leftarrow y_j+\alpha_j p_j$. \label{step:yj}
\IF{$\|y_{j+1}\|\ge \delta$}
\STATE Compute $\sigma \ge 0$ such that $\|y_j+\sigma\,p_j\|=\delta$.
\STATE \textbf{return} $s \gets y_j+\sigma\,p_j$ and $\sout \gets \OKBinf$.  \label{step:return.outside}
\ENDIF
\STATE Set $r_{j+1} \leftarrow r_j+\alpha_j (H + 2\eps I) p_j$.
\IF{$\|r_{j+1}\|\le \tfrac{\zeta}{2} \min\{\|g\|,\eps\|y_{j+1}\|\}$}\label{step:res}
\STATE \textbf{return} $s \gets y_{j+1}$ and $\sout \gets \OKIres$. \label{step:return.res}
\ENDIF
\STATE Set $\beta_{j+1} \leftarrow {(r_{j+1}^T r_{j+1})}/{(r_j^T r_j)}$. \label{step:betaj}
\STATE Set $p_{j+1} \leftarrow  -r_{j+1} + \beta_{j+1}p_j$. \label{step:pj}
\STATE Set $j \gets j + 1$.
\ENDWHILE
\STATE \textbf{return} $s \gets y_{\itmax}$ and $\sout \gets \OKImax$. \label{step:return.itmax}
\end{algorithmic}
\end{algorithm}

\revised{When Algorithm~\ref{alg:etcg} returns because the inequality
  in Line~\ref{step:negcurve_test} holds, it is possible that the
  objective function in~\eqref{eq:inexact1:trsubpb} evaluated at the
  returned vector $s$ is larger than its value at $s=0$, a situation
  that is typically not possible when CG is used as a subproblem
  solver in trust-region methods. This is because, although the
  inequality in Line~\ref{step:negcurve_test} implies that $p_j$ is a
  direction of negative curvature for $H$, $p_j$ is not necessarily a
  direction of negative curvature for the matrix $H+2\epsilon I$ that
  defines the quadratic model in~\eqref{eq:inexact1:trsubpb}. Since,
  in this case, $s$ is obtained by moving to the boundary of the
  trust-region along the direction $p_j$ (similar to the behavior of
  Steihaug's CG method in \cite{TSteihaug_1983}, and needed for our
  complexity result), we require the following result, which
  establishes that any step computed by \cref{alg:etcg} possesses a
  decrease property with respect to the \emph{non-regularized} version
  of the quadratic model.}

\begin{lemma} \label{lem:tcgdecreasestep}
  \revisedbis{The step $s$ returned by \cref{alg:etcg} satisfies}
  \begin{equation*}
    g^T s + \tfrac12 s^T H s \le -\tfrac12\eps\|s\|^2.
  \end{equation*}
\end{lemma}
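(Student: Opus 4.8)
The plan is to treat separately the three ways in which \cref{alg:etcg} can return, matching the three possible values of $\sout$. The key structural fact is that, until the iteration at which termination occurs, every curvature test in line~\ref{step:negcurve_test} has failed, so the CG recursion has been carried out on the matrix $H + 2\eps I$ restricted to the Krylov subspace $\mathcal{K}_j := \lspan\{g, (H+2\eps I)g, \dots, (H+2\eps I)^{j-1}g\}$ with all curvatures $p_i^T(H+2\eps I)p_i > \eps\|p_i\|^2 > 0$. Consequently, along $\mathcal{K}_j$ the quadratic $q(s) := g^Ts + \tfrac12 s^T(H+2\eps I)s$ behaves like a strictly convex quadratic, and the standard CG property that $y_{j+1}$ minimizes $q$ over $\mathcal{K}_{j+1}$ (in particular $q(y_{j+1}) \le q(0) = 0$) is available; the same convexity gives $q$ nonincreasing along the segments built in lines~\ref{step:yj} and~\ref{step:return.outside}. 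The deduction I want in each case is
\[
  g^Ts + \tfrac12 s^T(H+2\eps I)s \le 0
  \quad\Longleftrightarrow\quad
  g^Ts + \tfrac12 s^T H s \le -\eps\|s\|^2 \le -\tfrac12\eps\|s\|^2,
\]
which is even stronger than what is claimed; the factor-of-two slack in the statement is presumably there to absorb the one case below where this clean bound fails.

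The three cases: (1) $\sout = \OKIres$ or $\sout = \OKImax$, i.e. $s = y_{j+1}$ (or $s = y_{\itmax}$) is an interior CG iterate. Here $s$ minimizes $q$ over a Krylov subspace containing $0$, so $q(s) \le q(0) = 0$, giving the strong bound above. (2) $\sout = \OKBinf$, i.e. $s = y_j + \sigma p_j$ with $\sigma \in [0,\alpha_j]$ and $\|s\| = \delta$. Since $q$ is strictly convex along the line $t \mapsto y_j + t p_j$ and $\alpha_j$ is its unconstrained minimizer on that line, $q$ is nonincreasing on $[0,\alpha_j]$, so $q(s) \le q(y_j) \le q(0) = 0$, again the strong bound. (3) $\sout = \OKBneg$, i.e. termination because $p_j^T(H+2\eps I)p_j \le \eps\|p_j\|^2$, and $s = y_j + \sigma p_j$ with $\|s\| = \delta$. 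This is the delicate case: $p_j$ need not be a direction of negative curvature for $H + 2\eps I$, so moving to the boundary may increase $q$. I would instead argue directly with $H$: the termination test says $p_j^T H p_j \le -\eps\|p_j\|^2$, so $p_j$ is genuinely a negative-curvature direction for $H$. Expanding $g^Ts + \tfrac12 s^T H s$ at $s = y_j + \sigma p_j$, using $g^T p_j$-type relations from CG (the residual $r_j = g + (H+2\eps I)y_j$ is orthogonal to $\mathcal{K}_j \ni p_{j-1}, \dots, p_0$, and $p_j = -r_j + \beta_j p_{j-1}$), together with the bound $q(y_j) \le 0$ already available from case~(1) logic, and the curvature bound on $p_j$, should yield $g^Ts + \tfrac12 s^T H s \le -\tfrac12\eps\|s\|^2$ after the cross terms are controlled — this is where the extra factor of $\tfrac12$ is consumed.

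The main obstacle is case~(3): one must carefully expand $g^T(y_j+\sigma p_j) + \tfrac12 (y_j+\sigma p_j)^T H (y_j + \sigma p_j)$ and bound each piece. The cross term $\sigma(g + H y_j)^T p_j = \sigma(r_j - 2\eps y_j)^T p_j$ needs the CG orthogonality/conjugacy identities to be pinned down (e.g. $r_j^T p_j = -\|r_j\|^2$, and $y_j^T p_j$ expressed via previous step lengths), and the quadratic-in-$\sigma$ terms must be combined with $-\eps\|p_j\|^2$ from the curvature test. I expect the cleanest route is to write $g^Ts + \tfrac12 s^T H s = q(s) - \eps\|s\|^2$ and bound $q(s)$ by $q(y_j)$ plus an error term proportional to $\sigma^2 p_j^T(H+2\eps I)p_j \le \sigma^2 \eps\|p_j\|^2 \le \eps\|s\|^2$ (using $\|\sigma p_j\| \le \|s\| = \delta$ and conjugacy $y_j^T(H+2\eps I)p_j = 0$ to kill the linear-in-$\sigma$ curvature term), so that $q(s) \le q(y_j) + \eps\|s\|^2 \le \eps\|s\|^2$, and hence $g^Ts + \tfrac12 s^T H s \le \eps\|s\|^2 - \eps\|s\|^2$ — which is too weak — forcing a slightly sharper accounting that exploits $\sigma \le \alpha_j$ or the sign of $g^Tp_j$ to recover the $-\tfrac12\eps\|s\|^2$. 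Ironing out exactly which of these refinements is needed is the crux of the argument.
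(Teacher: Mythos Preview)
Your approach is correct and essentially identical to the paper's; the only reason you think case~(3) is unfinished is a bookkeeping slip. When you expand $q(s) = q(y_j + \sigma p_j)$ using conjugacy $y_j^T(H+2\eps I)p_j = 0$ and $g^Tp_j = r_0^Tp_j = -\|r_j\|^2 \le 0$, the curvature term you pick up is $\tfrac12\sigma^2 p_j^T(H+2\eps I)p_j$, not $\sigma^2 p_j^T(H+2\eps I)p_j$. Thus the correct chain is
\[
  q(s) \;\le\; q(y_j) + \tfrac12\sigma^2 p_j^T(H+2\eps I)p_j \;\le\; 0 + \tfrac12\eps\,\sigma^2\|p_j\|^2 \;\le\; \tfrac12\eps\|s\|^2,
\]
and then $g^Ts + \tfrac12 s^THs = q(s) - \eps\|s\|^2 \le -\tfrac12\eps\|s\|^2$, which is exactly the claim. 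No further refinement is needed.

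One small point you glossed over: the inequality $\sigma^2\|p_j\|^2 \le \|s\|^2$ requires $y_j^Tp_j \ge 0$, since $\|s\|^2 = \|y_j\|^2 + 2\sigma y_j^Tp_j + \sigma^2\|p_j\|^2$. This is a standard CG fact (the paper cites \cite[eq.~(2.13)]{TSteihaug_1983}) and you should state it explicitly. With that and the factor of $\tfrac12$ restored, your argument and the paper's are the same line by line; the paper also treats $j=0$ separately, but as you can check, your general argument already covers it.
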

\begin{proof}
  Basic CG theory ensures that for any $j$ up to termination, the
  sequence $\{g^Ty_j + \tfrac12 y_j^T (H + 2\eps I) y_j\}$ is
  monotonically decreasing.  Since $y_0=0$, we thus have
  \begin{equation}\label{eq:x6W9q}
    g^Ty_i + \tfrac12 y_i^T (H + 2\eps I) y_i \le 0\ \ \text{for all}\ \ i \in \{0,1,\dots,j\}.
  \end{equation}

  \revisedbis{Suppose $\sout \in\{\OKBinf,\OKIres,\OKImax\}$.}
  From \cref{eq:x6W9q} and the fact \revised{(by \cite[Theorem~2.1]{TSteihaug_1983})} that $g^Ts + \tfrac12 s^T(H +
  2\eps I)s \ \revised{\leq}\  g^Ty_j + \tfrac12 y_j^T(H + 2\eps I)y_j$ when $\sout
  \gets \OKBinf$, we have
  \[
    g^T s + \tfrac12 s^T (H + 2\eps I) s \; \le \; 0 \ \Leftrightarrow \ 
    g^T s + \tfrac12 s^T H s \; \le \; -\eps\|s\|^2,	
  \]
  which implies the desired result.
    
  Second, suppose that $\sout \gets \OKBneg$, meaning that
  \cref{alg:etcg} terminates because iteration $j$ yields $p_j^T (H +
  2\eps I) p_j \le \eps \|p_j\|^2$.  If $j=0$, then the fact that
  $p_0=-g$ allows us to conclude that $s = \delta(p_0/\|p_0\|) =
  -\delta (g /\|g\|)$, $\|s\| = \delta$, and
  \[
    \tfrac12 s^T (H + 2\eps I) s = \tfrac12 \delta^2  (p_0^T (H + 2\eps I) p_0) / \|p_0\|^2 \le \tfrac12 \eps \delta^2 = \tfrac12 \eps\|s\|^2,
  \]
  from which it follows that
  \[	
    g^T s + \tfrac12 s^T H s = -\delta\|g\| + \tfrac12 s^T (H+ 2\eps I)s 
    -\eps \|s\|^2 \le -\tfrac12 \eps \|s\|^2, 
  \]
  as desired.  On the other hand, if $j \ge 1$, then the fact that
  $\sout \gets \OKBneg$ means that $s \gets y_j + \sigma p_j$ with
  $\sigma \ge 0$ such that $\|s\|=\delta$. The CG process yields:
  \begin{subequations} \label{eq:cgproperties}
    \begin{align}
      y_i = \sum_{\ell=0}^{i-1} \alpha_\ell p_\ell \in 
      \lspan\left\{p_0,\dotsc,p_{i-1}\right\} 
      &\quad \mbox{for all $i\in\{1,2,\dotsc,j\}$,}\label{eq:cg.yisumpi}  \\
      p_i^T (H + 2\eps I) p_\ell=0  
      &\quad \mbox{for all $\{i,\ell\}\subseteq \{0,1,\dotsc,j\}$ with $i \neq \ell$,}
      \label{eq:cg.piHpj} \\		
      r_i^T p_j = \revised{-\|r_j\|^2} 
      &\quad\mbox{for all $i \in \{0,1,\dotsc,j\}$,} \label{eq:cg.ripj} \\
      \text{and}\ \ y_i^T p_i \ge 0 &\quad \mbox{for all $i\in\{0,1,\dotsc,j\}$.} \label{eq:cg.yiTpi}
    \end{align}
  \end{subequations}
\revised{(Referring to \cref{alg:etcg}, the property
  \cref{eq:cg.yisumpi} follows from Line~\ref{step:yj};
  \cref{eq:cg.piHpj} is the well known conjugacy property; and
  \cref{eq:cg.ripj} is obtained by successively substituting for
  $p_j,p_{j-1}, \dotsc, p_{i+1}$ from Line~\ref{step:pj}, using the
  property that $r_i^Tr_l=0$ for $l \neq i$, and using the definition
  of $\beta_j$ from Line~\ref{step:betaj}.}
  For \cref{eq:cg.yiTpi},
  see \cite[eq.~(2.13)]{TSteihaug_1983}.)
  Together, \cref{eq:cgproperties} and $s = y_j + \sigma p_j$ imply
  \begin{subequations} \label{eq:tcgdecreasestep.Hconj}
    \begin{align}
   g^T p_j &= r_0^T p_j \revised{= -\|r_j \|^2} \le 0
   \\ 
   s^T (H + 2\eps I) s &= y_j^T (H + 2\eps I) y_j
    + \sigma^2 p_j^T (H + 2\eps I) p_j \\ \text{and} \ \
    \|s\|^2 &= \|y_j\|^2 + 2 \sigma y_j^T p_j + \sigma^2 \|p_j\|^2 
    \ge \sigma^2 \|p_j\|^2.
    \end{align}
  \end{subequations}
  Combining \cref{eq:x6W9q}, \cref{eq:tcgdecreasestep.Hconj}, $\sigma \geq 0$, and $p_j^T (H + 2\eps I) p_j \le \eps \|p_j\|^2$ shows that
  \begin{eqnarray*}
    g^T s + \tfrac{1}{2}s^T H s &= &g^T s + \tfrac{1}{2}s^T (H + 2\eps I) s -\eps\|s\|^2 \\
    &= &g^T y_j + \tfrac12 y_j^T (H + 2\eps I) y_j + \sigma g^T p_j + 
    \tfrac12 \sigma^2 p_j^T (H + 2\eps I) p_j -\eps\|s\|^2 \\ 
    &\le &\tfrac12 \sigma^2 p_j^T (H + 2\eps I) p_j -\eps\|s\|^2  
    \le \tfrac12 \sigma^2 \eps \|p_j\|^2 -\eps\|s\|^2 
    \le -\tfrac12 \eps \|s\|^2,
  \end{eqnarray*}
  which completes the proof.
\end{proof}

\revisedbis{\cref{lem:tcgdecreasestep} shows that if $\eps = \epsH$}, 
then the bound on the model decrease obtained by the
truncated CG step $s$ is the same as the bound guaranteed by the
global solution computed for \cref{alg:exact} (see
\cref{lem:exact.modeldec}). However, we note that this decrease is
obtained by using a larger regularization term.

\subsection{A minimum eigenvalue oracle}
\label{subsec:inexact:meo}


The truncated CG algorithm presented in
Section~\ref{subsec:tcg:classic} is only one of the tools we need for
our proposed inexact trust-region Newton method.  Two complicating
cases require an additional tool.

The first case is when $\sout = \OKImax$ is returned by
\cref{alg:etcg}.  In this case, it must hold that the maximum allowed
number of iterations satisfies $\itmax < n$ and, as a consequence of
\cref{lem:neg-exists}, that $H \nsucc -\epsilon I$.  Thus, there
exists a direction of sufficient negative curvature for $H$, and we
need a means of computing one.  The second case is when
\cref{alg:etcg} terminates with $\sout = \OKIres$. In this case, we
only know that the curvature is not sufficiently negative along the
directions computed by the algorithm.  However, it may still be true
that $H \nsucc -\epsilon I$.

These two cases motivate the need for a {\em minimum eigenvalue
  oracle} that estimates the minimum eigenvalue of $H$, or else
returns \revisedbis{an indication} that (with some desired probability) no
sufficiently negative eigenvalue exists. The oracle that we employ is
given by \cref{alg:meo}.



\floatname{algorithm}{Algorithm}
\begin{algorithm}[ht!]
\caption{Minimum Eigenvalue Oracle (MEO)}
\label{alg:meo}
\begin{algorithmic}
  \STATE \textbf{Input:} $g \in \R^n$; $H \in \Sym^n$; regularization parameter $\eps \in (0,\infty)$; trust-region radius $\delta \in(0,\infty)$; failure probability tolerance $\pbpi \in (0,1)$; and $M \in [\|H\|,\infty)$.
  \STATE \textbf{Output:} Either (i) a vector $s = \pm \delta v$ satisfying
  \begin{equation} \label{eq:inexact2.negcurvsk}
  g^T s \le 0, \ \
  s^T H s \le -\tfrac12 \epsilon \|s\|^2, 
  \ \ \text{and} \ \ 
  \|s\|=\delta,
  \end{equation}
  where $v$ has been computed to satisfy $\|v\|=1$ and $v^T H v \leq -
  \epsilon/2$, or (ii) \revisedbis{an indication} that $H \succeq -\epsilon I$
  holds.
  \revisedbis{The probability that the \revisedbis{indication} in case (ii) is made
    yet $H \prec -\epsilon I$ is at most $\pbpi$.}  (The bound $M$ may
  be needed for algorithm termination; see \cref{assum:wccmeo} on
  page~\pageref{assum:wccmeo}.)
\end{algorithmic}
\end{algorithm}

\section{An inexact trust-region Newton method}
\label{sec:inexact}


In this section, we propose a trust-region Newton method that may use,
during each iteration, an \emph{inexact} solution to the trust-region
subproblem computed using the iterative procedures described in
Section~\ref{sec:tcg}. The proposed algorithm is described in
Section~\ref{subsec:inexact:algo} and a second-order complexity
analysis is presented in Section~\ref{subsec:inexact:wcc}.

\subsection{The algorithm}
\label{subsec:inexact:algo}

\cref{alg:inexact2} can be viewed as an inexact version of \cref{alg:exact}.
We aim at remaining close to the traditional Newton-CG approaches
in~\cite{TSteihaug_1983,PhLToint_1981} by having \cref{alg:inexact2}
compute, when appropriate, a truncated CG step in
line~\ref{step:call-cg}. Once such a step is computed (or set to zero
since the current iterate is first-order stationary),
\cref{alg:inexact2} deviates from traditional Newton-CG in
the ``else'' branch (line~\ref{step:deviate.else}), which accounts
for the two situations described in Section~\ref{subsec:inexact:meo},
where an additional check for a negative curvature direction is
needed.  (There is one minor difference: when $\sout = \OKIres$, the MEO
need be called only when $\|g_k\| \leq \epsg$.)

\floatname{algorithm}{Algorithm}

\begin{algorithm}[ht!]
\caption{Trust-Region Newton-CG Method (inexact version)}
\label{alg:inexact2}
\begin{algorithmic}[1]
\REQUIRE Tolerances  $\epsg\in(0,\infty)$ and $\epsH\in(0,\infty)$; \revised{trust-region adjustment} parameters $\gamma_1 \in (0,1)$, $\gamma_2 \in [1,\infty)$, \revised{and $\psi \in (1/\gamma_2,1]$}; 
initial iterate $x_0\in\R^n$; initial trust-region 
radius $\delta_0\in(0,\infty)$; maximum trust-region radius $\delta_{\max} \in [\delta_0,\infty)$; step acceptance parameter $\eta\in(0,1)$; truncated CG accuracy parameter $\zeta \in (0,1)$; MEO failure probability tolerance $\pbpi\in[0,1)$; flag $\capCG\in\{\true,\false\}$; and upper bound $M \in [L_g,\infty)$.
\smallskip
\hrule
\smallskip
\FOR{$k=0,1,2,\dotsc$}
\STATE Evaluate $g_k$ and $H_k$.
\IF{$g_k \neq 0$}
\STATE Call \cref{alg:etcg} with input $g = g_k$, $H = H_k$, $\epsilon = \epsH$, $\delta = \delta_k$, $\zeta$, $\capCG$, and (if $\capCG=\true$) $M$ to compute $\skCG$ and output flag $\sout$. \label{step:call-cg}
\ELSE
\STATE Set $\skCG \gets 0$ and $\sout \gets \OKIres$.
\ENDIF
\IF{$\sout\in\{\OKBneg,\OKBinf\}$ or ($\| g_k \| > \epsg$ and $\sout = \OKIres$)} \label{step:deviate}
\STATE Set $s_k \gets \skCG$.
\ELSE[that is, $\sout = \OKImax$ or ($\|g_k\| \leq \epsg$ and $\sout = \OKIres$)] \label{step:deviate.else}
\STATE Call \cref{alg:meo} with inputs $g = g_k$, $H = H_k$, $\eps = \epsH$, $\delta = \delta_k$, $\pbpi$, and $M$, obtaining either $s_k$ satisfying \cref{eq:inexact2.negcurvsk} or \revisedbis{an indication} that $H_k \succeq -\epsH I$. \label{step:call-meo}
\IF{\cref{alg:meo} predicts that $H_k \succeq -\epsH I$}
  \STATE \textbf{return} $x_k$. \label{step:return-inexact}
\ENDIF
\ENDIF
\STATE Compute the ratio of actual to predicted decrease in $f$ defined as
\[
\rho_k \gets \frac{f(x_k)-f(x_k+s_k)}{m_k(x_k)-m_k(x_k+s_k)}.
\]
\IF{$\rho_k \ge \eta$}
\STATE Set $x_{k+1} \gets x_k+s_k$.
\revised{\IF{$\|s_k\| \ge \psi \delta_k$}
\STATE Set $\delta_{k+1} \gets \min\left\{\gamma_2\delta_k,\delta_{\max}\right\}$.
\ELSE \STATE Set $\delta_{k+1} \gets \delta_k$.
\ENDIF}
\ELSE
\STATE Set $x_{k+1} \gets x_k$ and $\delta_{k+1} \gets \gamma_1 \revised{ \|s_k\|}$.
\ENDIF
\ENDFOR
\end{algorithmic}
\end{algorithm}

\subsection{Complexity} 
\label{subsec:inexact:wcc}


As in~\cite{CWRoyer_MONeill_SJWright_2019}, we make the following
assumption on the MEO in order to obtain complexity results for
\cref{alg:inexact2}.

\begin{assumption} \label{assum:wccmeo}
When \cref{alg:meo} is called by \cref{alg:inexact2}, the number of
Hessian-vector products required is no more than
\begin{equation} \label{eq:wccmeo}
  N_{\mathrm{meo}}
  = N_{\mathrm{meo}}(\epsH) 
  :=\min\left\{n,1+ 
  \left\lceil\Cmeo\epsH^{-1/2} \right\rceil\right\}
\end{equation}
where the quantity $\Cmeo$ depends at most logarithmically on
$\pbpi$.
\end{assumption}

The following instances of \cref{alg:meo} satisfy \cref{assum:wccmeo}.
\begin{itemize}
\item The \emph{Lanczos algorithm} applied to $H$ starting with a
  random vector uniformly distributed on the unit sphere.  For any
  $\pbpi \in (0,1)$, this satisfies the conditions in
    \cref{assum:wccmeo} with $\Cmeo = \ln(2.75
    n/\pbpi^2)\sqrt{M}/2$;
    see~\cite[Lemma~2]{CWRoyer_MONeill_SJWright_2019}.
  \item The \emph{conjugate gradient algorithm} applied to $\left(H +
    \tfrac{\epsH}{2} I \right) s = b$, where $b$ is a random vector
    uniformly distributed on the unit sphere.  For any $\pbpi \in
    (0,1)$, this offers \cref{assum:wccmeo} with the same value of
      $\Cmeo$ as in the Lanczos-based approach;
      see~\cite[Theorem~1]{CWRoyer_MONeill_SJWright_2019}.
\end{itemize}
  Since for each instance the conditions of \cref{assum:wccmeo} hold
  with $\Cmeo$ equal to the given value, it follows that throughout a
  run of \cref{alg:inexact2}, the conditions hold with $\Cmeo =
  \ln(2.75 n/\pbpi^2)\sqrt{L_g}/2$.  \cref{alg:meo} could also be
  implemented by means of an exact (minimum) eigenvalue calculation of
  the Hessian. In that case, up to $n$ Hessian-vector products may be
  required to evaluate the full Hessian.



In the following analysis, we use similar notation as in
Section~\ref{sec:exact}, although the analysis here is notably
different due to the randomness of the MEO.  For consistency, we use
the same definitions of the index sets $\Kcal$, $\Kcalis$, $\Kcalbs$,
$\Scal$, and $\Ucal$ that appear in the beginning of
Section~\ref{subsec:exact:wcc}; in particular, we define $\Kcal$ as
the index set of iterations completed prior to termination.  However,
note that for \cref{alg:inexact2} these sets are random variables, in
the sense that for the same objective function and algorithm inputs,
they may have different realizations due to the randomness
in~\cref{alg:meo}.  Thus, when we refer, for example, to $k \in
\Kcal$, we are referring to $k \in \Kcal$ \emph{for a given
  realization of a run of \cref{alg:inexact2}.}  We also prove bounds
on quantities that are shown to hold for \emph{all} realizations of a
run of the algorithm (for a given objective function and algorithm
inputs).  To emphasize that these bounds hold for all realizations,
their constants are written with a bar over the letter in the
definition.

Our first result provides a lower bound on the reduction in the
quadratic model of the objective function achieved by each trial step.

\begin{lemma} \label{lem:inexact2.modeldec}
  Consider any realization of a run of \cref{alg:inexact2}.  For all
  $k \in \Kcal$,
  \[
    m_k(x_k) - m_k(x_k+s_k) \ge \tfrac{1}{4}\epsH \|s_k\|^2.
  \]
\end{lemma}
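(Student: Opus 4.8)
The plan is to turn this into a short case analysis on how the trial step $s_k$ is produced inside iteration $k$ of \cref{alg:inexact2}, exploiting that $k\in\Kcal$ means iteration $k$ completes without the algorithm returning at line~\ref{step:return-inexact}. First I would rewrite the left-hand side using the definition~\cref{eq:m} of $m_k$, namely
\[
  m_k(x_k) - m_k(x_k+s_k) = -g_k^Ts_k - \tfrac12 s_k^T H_k s_k,
\]
so that the claim becomes equivalent to $g_k^Ts_k + \tfrac12 s_k^T H_k s_k \le -\tfrac14\epsH\|s_k\|^2$.

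Next I would observe that, for $k\in\Kcal$, exactly one of two situations occurs. Either (a) the test in line~\ref{step:deviate} holds and $s_k = \skCG$ is the output of \cref{alg:etcg} from line~\ref{step:call-cg}, called with $g=g_k$, $H=H_k$, $\eps=\epsH$, and $\delta=\delta_k$; note that this forces $g_k\neq 0$, since if $g_k=0$ then $\sout=\OKIres$ and $\|g_k\|\not>\epsg$, making the test in line~\ref{step:deviate} fail. Or (b) control passes to the ``else'' branch at line~\ref{step:deviate.else}, \cref{alg:meo} is invoked with $\eps=\epsH$, and---because $k\in\Kcal$ precludes the return at line~\ref{step:return-inexact}---\cref{alg:meo} must return an $s_k$ satisfying~\cref{eq:inexact2.negcurvsk} with $\eps=\epsH$ rather than the indication $H_k\succeq-\epsH I$.

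In case (a), I would simply apply \cref{lem:tcgdecreasestep} to the step returned by \cref{alg:etcg} (with $\eps=\epsH$), obtaining $g_k^Ts_k + \tfrac12 s_k^T H_k s_k \le -\tfrac12\epsH\|s_k\|^2$, which is stronger than what is needed. In case (b), \cref{eq:inexact2.negcurvsk} directly gives $g_k^Ts_k\le 0$ and $s_k^T H_k s_k \le -\tfrac12\epsH\|s_k\|^2$, hence $g_k^Ts_k + \tfrac12 s_k^T H_k s_k \le 0 - \tfrac14\epsH\|s_k\|^2$. Combining either bound with the identity from the first step yields $m_k(x_k)-m_k(x_k+s_k)\ge\tfrac14\epsH\|s_k\|^2$ in both cases.

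I do not expect a genuine obstacle here; the only care required is the bookkeeping over which branch of \cref{alg:inexact2} is taken, together with the use of the membership $k\in\Kcal$ to rule out the early termination at line~\ref{step:return-inexact}, which is precisely what guarantees that the MEO output in case (b) is a negative-curvature step. (The weaker constant $\tfrac14$, compared to $\tfrac12$ in \cref{lem:exact.modeldec}, is inherited from the curvature guarantee $s_k^T H_k s_k\le-\tfrac12\epsH\|s_k\|^2$ coming from \cref{alg:meo} in case (b).)
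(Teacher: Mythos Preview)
Your proposal is correct and follows essentially the same approach as the paper: a two-case split depending on whether $s_k$ comes from \cref{alg:etcg} (where \cref{lem:tcgdecreasestep} gives the bound) or from \cref{alg:meo} (where \cref{eq:inexact2.negcurvsk} gives $g_k^Ts_k\le 0$ and $s_k^TH_ks_k\le-\tfrac12\epsH\|s_k\|^2$). Your additional bookkeeping about $g_k\neq 0$ in case~(a) and the use of $k\in\Kcal$ to exclude the return at line~\ref{step:return-inexact} in case~(b) is more explicit than the paper's version but not materially different.
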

\begin{proof}
  If $s_k = \skCG$, where $\skCG$ is computed from \cref{alg:etcg},
  then, \revisedbis{it follows by \cref{lem:tcgdecreasestep} that 
  the desired bound holds.}  Now suppose that~$s_k$ is computed
  from \cref{alg:meo} in line~\ref{step:call-meo}. Since $k \in
  \Kcal$, \cref{alg:inexact2} does not terminate in iteration $k$, and
  it follows from \cref{eq:inexact2.negcurvsk} that
  \[
    m_k(x_k) - m_k(x_k+s_k) 
    = -g_k^T s_k - \tfrac{1}{2} s_k^T H_k s_k 
    \ge -\tfrac{1}{2}s_k^T H_k s_k \ge \tfrac{1}{4} \epsH \|s_k\|^2,
  \]
  as desired.
\end{proof}

We can now show that a sufficiently small trust-region radius leads to
a successful iteration, and provide a lower bound on the sequence of
trust-region radii.

\begin{lemma} \label{lem:inexact2.delta-lb}
  Consider any realization of a run of \cref{alg:inexact2}.  For all
  $k \in \Kcal$,
  \revised{if $k \in \Ucal$, then $\delta_k > 3(1-\eta) \epsH/(2\lipH)$.} Hence, by the trust-region radius update procedure, it
  follows that for any realization of a run of \cref{alg:inexact2} that
  \begin{equation} \label{eq:inexact2.delta-lb}
    \delta_k \geq \bar\delta_{\min} := \min\left\{\delta_0, 
     \left(\tfrac{3\gamma_1(1-\eta)}{2\lipH}\right) \epsH \right\} \in (0,\infty) 
    \ \ \text{for all}\ \ k \in \Kcal.
  \end{equation}
\end{lemma}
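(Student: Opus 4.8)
The plan is to replay the argument behind \cref{lem:exact:delta-lb}, with two substitutions. First, the model-decrease guarantee available here is the weaker bound $m_k(x_k)-m_k(x_k+s_k) \ge \tfrac14\epsH\|s_k\|^2$ from \cref{lem:inexact2.modeldec} (valid for both the \cref{alg:etcg} step, via \cref{lem:tcgdecreasestep}, and the \cref{alg:meo} step, via \cref{eq:inexact2.negcurvsk}); this factor of $\tfrac14$ in place of $\tfrac12$ is precisely what costs the extra factor of $2$ in the threshold. Second, I will use that every trial step is feasible for its trust region, i.e., $\|s_k\| \le \delta_k$ for all $k\in\Kcal$: this holds by construction when $s_k=\skCG$ is returned by \cref{alg:etcg} (with equality when $\sout\in\{\OKBinf,\OKBneg\}$ and strict inequality when $\sout\in\{\OKIres,\OKImax\}$), and holds with equality when $s_k$ is returned by \cref{alg:meo}. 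I will also observe that $s_k\neq 0$ for every $k\in\Kcal$ (the \cref{alg:etcg} step is never the zero vector, and if $g_k=0$ then for $k\in\Kcal$ the step must come from \cref{alg:meo} with $\|s_k\|=\delta_k$), so that division by $\|s_k\|^2$ below is legitimate and so that the update $\delta_{k+1}\gets\gamma_1\|s_k\|$ cannot collapse the radius.

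For the first claim, I would take $k\in\Ucal$, so $\rho_k<\eta$, which (since the denominator $m_k(x_k)-m_k(x_k+s_k)\ge\tfrac14\epsH\|s_k\|^2>0$ by \cref{lem:inexact2.modeldec}) is equivalent to $\eta\big(m_k(x_k+s_k)-m_k(x_k)\big) < f(x_k+s_k)-f_k$. Bounding the right-hand side above using \cref{eq:fub.2} and the definition \cref{eq:m} of $m_k$, namely $f(x_k+s_k)-f_k \le \big(m_k(x_k+s_k)-m_k(x_k)\big) + \tfrac{\lipH}{6}\|s_k\|^3$, and rearranging gives $(1-\eta)\big(m_k(x_k)-m_k(x_k+s_k)\big) < \tfrac{\lipH}{6}\|s_k\|^3$. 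Applying \cref{lem:inexact2.modeldec} to the left-hand side yields $\tfrac{1-\eta}{4}\epsH\|s_k\|^2 < \tfrac{\lipH}{6}\|s_k\|^3$, and dividing by $\|s_k\|^2>0$ gives $\|s_k\| > \tfrac{3(1-\eta)}{2\lipH}\epsH$; since $\delta_k\ge\|s_k\|$, this is the asserted bound $\delta_k > 3(1-\eta)\epsH/(2\lipH)$. By contraposition, any iteration with $\|s_k\|\le\tfrac{3(1-\eta)}{2\lipH}\epsH$ is successful.

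For the second claim I would then track the trust-region radius through the update rule: on a successful iteration $\delta_{k+1}\ge\delta_k$ (using $\gamma_2\ge1$ together with the invariant $\delta_k\le\delta_{\max}$, itself immediate by induction from $\delta_0\le\delta_{\max}$), while on an unsuccessful iteration $\delta_{k+1}=\gamma_1\|s_k\| > \gamma_1\cdot\tfrac{3(1-\eta)}{2\lipH}\epsH$ by the first claim. Hence the radius decreases only at unsuccessful iterations and never falls below $\tfrac{3\gamma_1(1-\eta)}{2\lipH}\epsH$; combining this with the initial value $\delta_0$ gives $\delta_k\ge\bar\delta_{\min}$ for all $k\in\Kcal$, which is \cref{eq:inexact2.delta-lb}. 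I do not expect a genuine obstacle, since the skeleton is the exact-case proof; the only care needed is (i) uniformly justifying $\|s_k\|\le\delta_k$ and $s_k\neq0$ across the several step-generation branches of \cref{alg:inexact2}, and (ii) faithfully carrying the factor-$\tfrac14$ of \cref{lem:inexact2.modeldec} into both the threshold and the constant defining $\bar\delta_{\min}$.
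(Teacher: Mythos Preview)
Your proposal is correct and follows essentially the same approach as the paper: the paper's proof simply says to replay the argument of \cref{lem:exact:delta-lb} with \cref{lem:inexact2.modeldec} substituted for \cref{lem:exact.modeldec}, which is exactly what you do (with more detail supplied). One minor imprecision: when $\sout=\OKImax$ in \cref{alg:inexact2}, the algorithm does \emph{not} set $s_k=\skCG$ but instead calls \cref{alg:meo}, so that case actually falls under your ``$s_k$ returned by \cref{alg:meo}'' branch rather than the ``$s_k=\skCG$ with strict inequality'' branch; this does not affect the validity of your argument.
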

\begin{proof}
  For any realization of a run of the algorithm, we can follow the 
  proof of \cref{lem:exact:delta-lb}, using 
  \cref{lem:inexact2.modeldec} in lieu of \cref{lem:exact.modeldec}. 
  Hence, the lower bound in \cref{eq:inexact2.delta-lb} holds,
  where~$\bar\delta_{\min}$ is independent of any
  particular realization of a run.
%
\end{proof}

\medskip

We now establish a bound on the objective reduction for a successful step.

\begin{lemma} \label{lem:inexact2.actualdec}
  Consider any realization of a run of \cref{alg:inexact2}.  The following hold for all successful iterations:
  \begin{enumerate}[(i)]
  \item If $k \in \Kcalbs \cap \Scal$, then
    \[	
    f_k - f_{k+1} \ge \tfrac{\eta}4 \epsH \delta_k^2.
    \]
  \item If $k \in \Kcalis \cap \Scal$,
    then $\|g_k\| > \epsg$, $\sout = \OKIres$, and 
    \[
    f_k - f_{k+1} \ge \tfrac{\eta}{4(7+2\lipH)} \min\left\{
    \|g_{k+1}\|^2 \epsH^{-1}, \epsH^3 \right\}.
    \]
  \end{enumerate}
\end{lemma}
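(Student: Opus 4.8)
The plan is to mirror the proof of \cref{lem:exact.actualdec}, substituting \cref{lem:inexact2.modeldec} for \cref{lem:exact.modeldec} and tracking carefully how the trial step $s_k$ was produced. Part~(i) is then immediate: since $k\in\Scal$ we have $f_k-f_{k+1}=\rho_k(m_k(x_k)-m_k(x_k+s_k))\ge\eta(m_k(x_k)-m_k(x_k+s_k))$, and \cref{lem:inexact2.modeldec} together with $\|s_k\|=\delta_k$ (from $k\in\Kcalbs$) yields $f_k-f_{k+1}\ge\tfrac{\eta}{4}\epsH\delta_k^2$.

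For part~(ii), the first task is the structural claim that $k\in\Kcalis\cap\Scal$ forces $\|g_k\|>\epsg$ and $\sout=\OKIres$. I would establish this by ruling out the alternatives in \cref{alg:inexact2}: if $\sout\in\{\OKBneg,\OKBinf\}$, then the truncated-CG step lies on the trust-region boundary, so $\|s_k\|=\delta_k$ and $k\notin\Kcalis$; if $\sout=\OKImax$, or $\sout=\OKIres$ with $\|g_k\|\le\epsg$ (in particular if $g_k=0$), then \cref{alg:meo} is invoked in line~\ref{step:call-meo}, and it either returns $s_k$ with $\|s_k\|=\delta_k$ by \cref{eq:inexact2.negcurvsk} (so again $k\notin\Kcalis$) or reports $H_k\succeq-\epsH I$, in which case the run terminates in line~\ref{step:return-inexact} and $k\notin\Kcal$. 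The only surviving case is $\sout=\OKIres$ with $\|g_k\|>\epsg$, and there $s_k=\skCG$ is an interior CG iterate $y_{j+1}$ satisfying the residual bound \cref{eq:cvcrittcg}.

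With that in hand, I would bound $\|g_{k+1}\|$ from above by decomposing $g_k+H_ks_k=(g_k+(H_k+2\epsH I)s_k)-2\epsH s_k$ and combining the triangle inequality with \cref{eq:fub.3}, \cref{eq:cvcrittcg} (using $\min\{\|g_k\|,\epsH\|s_k\|\}\le\epsH\|s_k\|$), and $\zeta<1$, to obtain $\|g_{k+1}\|\le\tfrac{\lipH}{2}\|s_k\|^2+\tfrac{5}{2}\epsH\|s_k\|$. Reading this as a scalar quadratic inequality in $\|s_k\|\ge0$ gives $\|s_k\|\ge\tfrac{\epsH}{\lipH}\bigl(-\tfrac52+\sqrt{\tfrac{25}{4}+2\lipH\|g_{k+1}\|\epsH^{-2}}\bigr)$; applying \cite[Lemma~17]{CWRoyer_SJWright_2018} (i.e.\ \cref{eq:ws8}) with $a=\tfrac52$, $b=2\lipH$, $t=\|g_{k+1}\|\epsH^{-2}$ and rationalizing yields $\|s_k\|\ge\tfrac{2}{\,\tfrac52+\sqrt{25/4+2\lipH}\,}\min\{\|g_{k+1}\|\epsH^{-1},\epsH\}$. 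Two crude estimates, $\tfrac52\le\sqrt{25/4+2\lipH}$ and $25/4\le7$, reduce this to $\|s_k\|\ge(7+2\lipH)^{-1/2}\min\{\|g_{k+1}\|\epsH^{-1},\epsH\}$, and then $f_k-f_{k+1}\ge\eta(m_k(x_k)-m_k(x_k+s_k))\ge\tfrac{\eta}{4}\epsH\|s_k\|^2\ge\tfrac{\eta}{4(7+2\lipH)}\min\{\|g_{k+1}\|^2\epsH^{-1},\epsH^3\}$, which is the claimed bound.

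I expect the main obstacle to be the case analysis in part~(ii): one must carefully enumerate every way \cref{alg:inexact2} can reach the end of an iteration---through both \cref{alg:etcg} and \cref{alg:meo}---and verify that each route other than ``$\sout=\OKIres$ with $\|g_k\|>\epsg$'' either places $s_k$ on the trust-region boundary or else terminates the run, so that $\Kcalis\cap\Kcal$ is confined to that single case. The remaining estimates are routine and differ from the exact case in \cref{lem:exact.actualdec} only in the constants, owing to the heavier ($2\epsH I$ rather than $\epsH I$) regularization used in the inexact subproblem.
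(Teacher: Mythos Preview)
Your proposal is correct and follows essentially the same approach as the paper: part~(i) is identical, and for part~(ii) the paper also argues first that $k\in\Kcalis$ forces $s_k=\skCG$ with $\sout=\OKIres$ and $\|g_k\|>\epsg$, then bounds $\|g_{k+1}\|\le\tfrac{\lipH}{2}\|s_k\|^2+\tfrac{5}{2}\epsH\|s_k\|$ via \cref{eq:fub.3} and \cref{eq:cvcrittcg}, and finally applies \cref{eq:ws8} to the resulting quadratic inequality. The only cosmetic difference is that the paper rescales and invokes \cref{eq:ws8} with $(a,b)=(5,8\lipH)$ rather than your $(a,b)=(\tfrac52,2\lipH)$; the two choices yield the same lower bound on $\|s_k\|$.
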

\begin{proof}
  For part (i), we combine $k\in\Kcalbs\cap\Scal$ with \cref{lem:inexact2.modeldec} to obtain, as desired,
  \[
  f_k - f_{k+1} 
  \ge \eta\left(m_k(x_k) - m_k(x_k+s_k) \right) 
  \ge \tfrac{\eta}4 \epsH \|s_k\|^2 
  =  \tfrac{\eta}4 \epsH \delta_k^2.
  \]
  
  Now consider part (ii). Note that since $k \in \Kcalis$, $s_k$
  cannot have been computed from a call to \cref{alg:meo} in
  line~\ref{step:call-meo}, since such steps always have $\|s_k \|=
  \delta_k$. Thus, $s_k=\skCG$.
  Moreover, from line~\ref{step:deviate} and the fact that $k \in
  \Kcalis$, we have that $\|g_k\| > \epsg$ and $\sout = \OKIres$, as
  desired.  In turn, the fact that $\sout = \OKIres$ implies
  that~\cref{eq:cvcrittcg} holds with $H = H_k$, $g = g_k$, $s = s_k$,
  and $\epsilon = \epsH$ so that
  \begin{equation} \label{eq:inexact.internalstep}
    r_k := (H_k + 2\epsH) s_k + g_k
    \ \ \text{has} \ \
    \|r_k\| \le \tfrac{\zeta}{2}\epsH\|s_k\|.
  \end{equation}
  Combining this bound with \cref{eq:fub.3} and $\zeta\in(0,1)$, we have
  \begin{align*}
    \|g_{k+1}\| &= \|g_{k+1} - g_k - (H_k+2\epsH)s_k + r_k\| \\
    &\le \|g_{k+1} - g_k - H_k s_k\| + 2\epsH\|s_k\| + \|r_k\| \\
    &\le \tfrac{\lipH}{2}\|s_k\|^2 + \left(\tfrac{4+\zeta}{2}\right)\epsH \|s_k\| 
    \le \tfrac{\lipH}{2}\|s_k\|^2 + \tfrac{5}{2}\epsH \|s_k\|,
  \end{align*}
  which can be rearranged to yield
  \[
  \tfrac{\lipH}{2}\|s_k\|^2+ \tfrac{5}{2}\epsH \|s_k\| - \|g_{k+1}\| \ge 0.
  \]  
  Reasoning as in the proof of \cref{lem:exact.actualdec},
  with $\frac{5}{2}\epsH$ replacing $\epsH$, we obtain
  \begin{eqnarray*}
    \|s_k\| 
    &\ge &\tfrac{-\tfrac{5}{2}\epsH + 
      \sqrt{\left(\tfrac{5}{2}\right)^2\epsH^2 + 2 \lipH \|g_{k+1}\|}}{\lipH}
    = \left(\tfrac{-5+\sqrt{25 + 8 \lipH  \|g_{k+1}\|\epsH^{-2}}}{2\lipH} \right) \epsH.
  \end{eqnarray*}
  \revised{
  By setting $(a,b,t) = (5, 8\lipH, \|g_{k+1}\|\epsH^{-2})$ in the
  inequality \eqref{eq:ws8}, we have}
  \begin{align*}
    \|s_k\|
    &\geq \left(\tfrac{-5+\sqrt{25+8\lipH}}{2\lipH}\right)
    \min\left\{ \|g_{k+1}\|\epsH^{-2},1\right\}\epsH \\
    &= \left(\tfrac{8 \lipH}{2\lipH (5+\sqrt{25+8\lipH})}\right)
    \min\left\{ \|g_{k+1}\|\epsH^{-1},\epsH\right\}  \\
    &= \left(\tfrac{4}{5+\sqrt{25+8\lipH}}\right)
    \min\left\{ \|g_{k+1}\|\epsH^{-1},\epsH\right\} \\
    &\geq \left(\tfrac{2}{\sqrt{25+8\lipH}}\right)
    \min\left\{ \|g_{k+1}\|\epsH^{-1},\epsH\right\}
    \geq  \tfrac{1}{\sqrt{7+2\lipH}}
    \min\left\{ \|g_{k+1}\|\epsH^{-1},\epsH\right\},
  \end{align*}
  which may be combined with $k\in\Kcalis\cap\Scal$ and \cref{lem:inexact2.modeldec} to obtain
  \begin{align*}
    f_k - f_{k+1} 
    &\ge 
    \eta(m_k(x_k)-m_k(x_{k+1})) 
    \ge \tfrac14  {\eta}\epsH \|s_k\|^2  
    \ge \tfrac{\eta}{4(7+2\lipH)}
    \min\left\{ \|g_{k+1}\|^2 \epsH^{-1},\epsH^3\right\},
  \end{align*}
  which completes the proof. 
\end{proof}


The next result is analogous to \cref{lem:exact.wccsuccc} and takes
randomness in the MEO into account.

\begin{lemma} \label{lem:inexact2.wccsucc}
  For any realization of a run of \cref{alg:inexact2}, the number of successful iterations performed before termination occurs satisfies 
    \begin{equation} \label{eq:inexact2.wccitssuccValue}
      |\Scal| 
      \leq \bar{K}_{\Scal}(\epsg,\epsH)
      := \left\lfloor\mathcal{\bar C}_\cS
      \max\{\epsH^{-1}, \epsg^{-2}\epsH, \epsH^{-3}\} \right\rfloor + 1,
    \end{equation}
    where
    \begin{equation} \label{eq:inexact2.wccitsCc}
      \mathcal{\bar C}_{\cS} 
      := \tfrac{8(f_0 - \flow)}{\eta}
      \max\left\{\tfrac{1}{\delta_0^2},
      \tfrac{4\lipH^2}{9\gamma_1^2(1-\eta)^2},
      7+2\lipH)\right\}.
    \end{equation}
\end{lemma}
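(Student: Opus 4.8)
The plan is to mirror the proof of \cref{lem:exact.wccsuccc} almost verbatim, substituting the inexact-solve ingredients \cref{lem:inexact2.modeldec}, \cref{lem:inexact2.delta-lb}, and \cref{lem:inexact2.actualdec} for their exact counterparts. I would fix an arbitrary realization of a run of \cref{alg:inexact2} and split $\Scal = \cSleq \cup \cSgg \cup \cSgleq$ with $\cSleq := \{k \in \Scal : \|g_k\| \le \epsg\}$, $\cSgg := \{k \in \Scal : \|g_k\| > \epsg \text{ and } \|g_{k+1}\| > \epsg\}$, and $\cSgleq := \{k \in \Scal : \|g_k\| > \epsg \text{ and } \|g_{k+1}\| \le \epsg\}$. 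The bookkeeping will run through the realization-independent constant
\[
  \bar c := \tfrac{\eta}{4}\min\left\{\delta_0^2,\ \tfrac{9\gamma_1^2(1-\eta)^2}{4\lipH^2},\ \tfrac{1}{7+2\lipH}\right\},
\]
chosen so that $\tfrac{2(f_0-\flow)}{\bar c}$ equals the constant $\bar{\mathcal C}_\cS$ in \cref{eq:inexact2.wccitsCc}.

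The one genuinely new point, compared with the exact case, is the treatment of $k \in \cSleq$. In \cref{lem:exact.wccsuccc} one invoked the subproblem optimality conditions \cref{eq.tr_opt} to conclude $\mu_k > 0$ and hence $\|s_k\| = \delta_k$; here I would instead read off the branching logic of \cref{alg:inexact2}. If $k \in \cSleq \cap \Kcal$, then $\|g_k\| \le \epsg$, so the test in line~\ref{step:deviate} selects $s_k \gets \skCG$ only when $\sout \in \{\OKBneg,\OKBinf\}$, both of which produce $\|s_k\| = \delta_k$; otherwise $\sout \in \{\OKIres,\OKImax\}$ and \cref{alg:meo} is called, and since $k \in \Kcal$ it must have returned a step satisfying \cref{eq:inexact2.negcurvsk}, again with $\|s_k\| = \delta_k$. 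Thus $\cSleq \subseteq \Kcalbs$, and \cref{lem:inexact2.actualdec}(i), \cref{lem:inexact2.delta-lb}, and the definition of $\bar c$ give $f_k - f_{k+1} \ge \tfrac{\eta}{4}\epsH\delta_k^2 \ge \bar c\min\{\epsH,\epsH^3\}$ for every $k \in \cSleq$.

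For $k \in \cSgg$, either case of \cref{lem:inexact2.actualdec} may apply, so I would only claim $f_k - f_{k+1} \ge \tfrac{\eta}{4}\min\{\delta_k^2\epsH,\ \tfrac{\|g_{k+1}\|^2}{7+2\lipH}\epsH^{-1},\ \tfrac{1}{7+2\lipH}\epsH^3\}$; combined with $\|g_{k+1}\| > \epsg$, the lower bound on $\delta_k$ from \cref{lem:inexact2.delta-lb}, and the definition of $\bar c$ this becomes $f_k - f_{k+1} \ge \bar c\min\{\epsH,\epsg^2\epsH^{-1},\epsH^3\}$. Summing these bounds over $\cSleq \cup \cSgg$, using \cref{assum:f} and monotonicity of $\{f_k\}$ so that the telescoped decrease over $\Kcal$ is at most $f_0 - \flow$, yields $|\cSleq| + |\cSgg| \le \tfrac{f_0-\flow}{\bar c}\max\{\epsH^{-1},\epsg^{-2}\epsH,\epsH^{-3}\}$. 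Exactly as in the exact case, an element of $\cSgleq$ is always immediately followed---if by anything---by an element of $\cSleq$, whence $|\cSgleq| \le |\cSleq| + 1$; adding up and substituting $\bar c$ gives $|\Scal| \le \bar{\mathcal C}_\cS\max\{\epsH^{-1},\epsg^{-2}\epsH,\epsH^{-3}\} + 1$, and taking the floor produces \cref{eq:inexact2.wccitssuccValue}. Since $\bar c$ and hence $\bar{\mathcal C}_\cS$ are independent of the realization, the bound holds for every realization. I expect the only real subtlety to be the control-flow argument establishing $\cSleq \subseteq \Kcalbs$ (and the attendant care that all decrease constants are realization-independent); everything else is a routine re-run of the exact-case estimates with updated numerical constants.
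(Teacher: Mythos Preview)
Your proposal is correct and follows essentially the same approach as the paper: the same three-way split of $\Scal$, the same constant $\bar c$ (called $c_1$ in the paper), and the same telescoping and $|\cSgleq|\le|\cSleq|+1$ counting. The one cosmetic difference is that the paper establishes $\cSleq\subseteq\Kcalbs$ by invoking the contrapositive of \cref{lem:inexact2.actualdec}(ii) (which asserts $\|g_k\|>\epsg$ whenever $k\in\Kcalis\cap\Scal$), whereas you trace the branching logic of \cref{alg:inexact2} directly; both arguments are valid and amount to the same observation.
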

\begin{proof}
    For a given realization of a run of the algorithm, we can 
    follow the reasoning of the proof for \cref{lem:exact.wccsuccc}.
    In what follows, $\cSleq$, $\cSgg$, and $\cSgleq$ are defined as in the 
    proof of Lemma~\ref{lem:exact.wccsuccc}.  (As is the case for $\Kcal$, we note 
    that these index sets are now realizations of random index sets.)

Consider first $k \in \cSleq$, and let us define the constant
  \begin{equation} \label{def:c1}
    c_1 := \tfrac{\eta}{4} \min\left\{
  		\delta_0^2,
		\tfrac{9\gamma_1^2(1-\eta)^2}{4 \lipH^2}, 
  		\tfrac{1}{7+2\lipH}\right\}.
  \end{equation}
  We can use
\cref{lem:inexact2.actualdec} to conclude that $k\in\Kcal\cap\Kcalbs$,
so that $\|s_k\| = \delta_k$. By combining
\cref{lem:inexact2.actualdec}(i), \cref{lem:inexact2.delta-lb}, and \cref{def:c1}, we
have for $k \in  \cSleq$ that
  \begin{equation} \label{eq:xj6}
    f_k-f_{k+1} \ge \tfrac{\eta}{4} \epsH \delta_k^2 \ge
    \tfrac{\eta}{4} \min \left\{ \delta_0^2 \epsH, \tfrac{9 \gamma_1^2(1-\eta)^2}{4 \lipH^2} \epsH^3\right\} \ge c_1 \min \left\{ \epsH,\epsH^{\revised{3}} \right\}.
  \end{equation}

For $k \in \cSgg$, we have from \cref{lem:inexact2.actualdec}
  (either (i) or (ii)) and \cref{lem:inexact2.delta-lb} that
  \begin{align}
    \nonumber
    f_k-f_{k+1} & \ge \tfrac{\eta}{4} \min \left\{ \delta_0^2 \epsH, \tfrac{9 \gamma_1^2(1-\eta)^2}{4\lipH^2} \epsH^3, \tfrac{1}{7+2\lipH} \epsg^2 \epsH^{-1}, \tfrac{1}{7+2\lipH} \epsH^3\right\} \\
    \label{eq:xj7}
    & \ge c_1 \min \left\{ \epsH, \epsH^3,\epsg^2 \epsH^{-1} \right\}.
    \end{align}
  By following the reasoning that led to \cref{bounds-first2}, we
  obtain from \cref{eq:xj6} and \cref{eq:xj7} that
  \[
  |\cSleq| +   |\cSgg| \leq \left(\tfrac{f_0 - \flow}{c_1}\right)\max\{\epsH^{-1},\epsg^{-2}\epsH,\epsH^{-3}\}.
  \]
  As in the proof of \cref{lem:exact.wccsuccc}, we have that $ |\cSgleq| \leq
  |\cSleq| + 1 $, so that
  \[
  |\cS|
    = |\cSleq| + |\cSgg| + |\cSgleq|
    \leq \tfrac{2(f_0-\flow)}{c_1}\max\{\epsH^{-1},\epsg^{-2}\epsH,\epsH^{-3}\} + 1.
    \]
    The desired bound follows by substituting the definition
    \cref{def:c1} into this bound.
		To complete the proof, we note
    that the right-hand side of~\cref{eq:inexact2.wccitssuccValue} is 
    identical for any realization of the algorithm run with the same inputs.
\end{proof}

We now provide a bound on the maximum number of unsuccessful iterations.


%
%

\begin{lemma} \label{lem:inexact2.wccunsuccsucc}
For any realization of a run of \cref{alg:inexact2}, the number of
unsuccessful iterations performed before termination occurs
\revised{is either zero or else satisfies}
  \begin{equation} \label{eq:inexact2.wccunsuccsucc}
    |\Ucal| \leq 
    \left\lfloor 1+
    \log_{\gamma_1}\left(\tfrac{3(1-\eta)}{2\lipH\delta_{\max}}\right)+ 
    \log_{\gamma_1}\left(\epsH\right)
    \right\rfloor (|\Scal|+1).
  \end{equation}
\end{lemma}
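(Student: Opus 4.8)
The plan is to follow the proof of \cref{lem:exact.wccunsuccsucc} almost verbatim, with \cref{lem:inexact2.delta-lb} used in place of \cref{lem:exact:delta-lb} (which is what replaces the constant $3(1-\eta)/\lipH$ by $3(1-\eta)/(2\lipH)$), together with one structural modification. In \cref{alg:exact} an unsuccessful iteration leaves both the iterate and the value of the termination test unchanged, so it can never be immediately followed by termination; hence the iteration just before termination is always successful, and the unsuccessful iterations split into at most $|\Scal|$ ``gaps.'' For \cref{alg:inexact2} this fails: even though $x_{k+1}=x_k$ after an unsuccessful step, the minimum eigenvalue oracle of \cref{alg:meo} is called on line~\ref{step:call-meo} precisely when $\sout=\OKImax$ or when $\|g_k\|\le\epsg$ and $\sout=\OKIres$ --- conditions that may newly hold once $\delta_k$ has shrunk --- and it may then report $H_k\succeq -\epsH I$, causing termination at line~\ref{step:return-inexact}. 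We must therefore also bound the block of unsuccessful iterations that follows the last successful one, and this extra block is exactly why the bound carries the factor $|\Scal|+1$ rather than $|\Scal|$. Since \cref{lem:inexact2.delta-lb} and the radius-update rules hold for every realization of a run, the final bound does as well, so the ``for any realization'' qualifier needs no special treatment.

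I would first handle the degenerate cases. If $|\Ucal|=0$ there is nothing to prove. If $|\Scal|=0$, then $\Kcal$ is a single contiguous block of unsuccessful iterations, which is bounded exactly as in the general case below (with $|\Scal|+1=1$). So assume $|\Scal|\ge 1$ and write $\Scal=\{k_1<\dots<k_{|\Scal|}\}$. Partition the unsuccessful iterations into the at most $|\Scal|+1$ blocks $\{0,\dots,k_1-1\}$, the blocks $\{k_i+1,\dots,k_{i+1}-1\}$ for $i=1,\dots,|\Scal|-1$, and the block $\{k_{|\Scal|}+1,\dots\}$ running up to the iteration before termination (any of which may be empty). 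For each block the reasoning is identical: its first index $b$ carries a radius $\delta_b\le\delta_{\max}$ (it is $\delta_0$, or a radius set by a successful step, all of which are at most $\delta_{\max}$ by an easy induction on the update rules), and at every later index $l$ of the block one has $\delta_l=\gamma_1\|s_{l-1}\|\le\gamma_1\delta_{l-1}$, hence $\delta_l\le\delta_{\max}\gamma_1^{\,l-b}$; evaluating this at the block's last (unsuccessful) index and combining with $\delta_l>3(1-\eta)\epsH/(2\lipH)$ from \cref{lem:inexact2.delta-lb}, then applying $\log_{\gamma_1}$ (which reverses inequalities because $\gamma_1\in(0,1)$), shows the block length is at most $1+\log_{\gamma_1}(\tfrac{3(1-\eta)}{2\lipH\delta_{\max}})+\log_{\gamma_1}(\epsH)$, and since the length is an integer, at most the floor of this quantity. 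As in the exact case, this floor is at least $1$ whenever a block is nonempty, so summing over the at most $|\Scal|+1$ blocks gives \eqref{eq:inexact2.wccunsuccsucc}.

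Almost all of this is a routine transcription of the \cref{lem:exact.wccunsuccsucc} argument, so the only real obstacle I anticipate is the conceptual one already flagged: one must recognize that \cref{alg:inexact2}, unlike \cref{alg:exact}, may terminate at an iterate first reached by an unsuccessful step --- because the oracle call is triggered only in the cases listed after line~\ref{step:deviate.else} --- which is what forces the trailing block and the $|\Scal|+1$ factor. Two minor consistency checks are also needed in passing: that $\delta_k\le\delta_{\max}$ for all $k$, and that the lower bound of \cref{lem:inexact2.delta-lb} applies at every unsuccessful index no matter whether $s_k$ was produced by \cref{alg:etcg} or by \cref{alg:meo}.
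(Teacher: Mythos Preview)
Your proposal is correct and follows the paper's approach exactly: replace \cref{lem:exact:delta-lb} by \cref{lem:inexact2.delta-lb}, partition the unsuccessful iterations into at most $|\Scal|+1$ blocks (including a possible trailing block after the last successful step, since \cref{alg:inexact2} can terminate via the MEO after an unsuccessful step), and bound each block as in \cref{lem:exact.wccunsuccsucc}. One small expository correction that does not affect the argument: the branch conditions for calling the MEO do not in fact ``newly hold once $\delta_k$ has shrunk'' --- shrinking $\delta_k$ pushes CG toward the boundary outputs $\OKBinf/\OKBneg$, not toward $\OKIres/\OKImax$ --- so the reason termination can follow an unsuccessful step is simply the randomness of \cref{alg:meo}, which may return a negative-curvature direction on one call and then (incorrectly) indicate $H_k\succeq -\epsH I$ on a subsequent one with the same $H_k$.
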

\begin{proof}
For a given realization of a run of the algorithm, the bound follows
from the argument in the proof of \cref{lem:exact.wccunsuccsucc} with
two changes. First, \cref{lem:inexact2.delta-lb} is used in place of
\cref{lem:exact:delta-lb}. Second, we do not know that the iteration
immediately prior to termination must be a successful iteration for
\cref{alg:inexact2}, as was the case for \cref{alg:exact}.  However,
using the argument in the proof of \cref{lem:exact.wccunsuccsucc}
along with \cref{lem:inexact2.delta-lb} shows that \revised{if a
  sequence of consecutive unsuccessful iterations is taken after the
  final successful iteration, there can be no more than}
\[
\left\lfloor 1 + 
  \log_{\gamma_1}\left(\tfrac{3(1-\eta)}{2\lipH \delta_{\max}}\right) +
  \log_{\gamma_1}(\epsH) \right\rfloor
  \]
\revised{such iterations in this sequence, since} otherwise an
additional successful iteration would be performed.  Taking this fact
into account leads to the extra $1$ on the right-hand side of
\cref{eq:inexact2.wccunsuccsucc} as compared
to~\cref{eq:exact.wccunsuccsucc}.
\end{proof}



\revisedbis{We assume in our remaining complexity results that the
  following common-sense rule is used in the implementation of
  \cref{alg:inexact2}.
\begin{Str}\label{impstr}
For any realization of a run of
\cref{alg:inexact2}, suppose $k \in \Kcal$ is an index of an iteration
such that (i) \cref{alg:meo} is called in line~\ref{step:call-meo} and
returns a negative curvature direction $s_k$ for~$H_k$ and (ii) the
step $s_k$ is subsequently rejected (that is, $k \in
\Ucal$).  Then, the negative curvature direction (call it $v = v_k$)
used to compute $s_k$ is stored and used until the next successful
iteration. Until then, every call to \cref{alg:meo} is replaced by an
access to $v_k$, scaled appropriately to compute $s_k$ with norm
$\delta_k$.
\end{Str}
}

\medskip


\revisedbis{This strategy} implies that \cref{alg:inexact2}
cannot terminate following a sequence of unsuccessful iterations if
any one of them yields a direction of sufficiently negative curvature.
In practice, this means that \cref{alg:inexact2} calls \cref{alg:meo}
at most once between successful iterations.
In the next iteration complexity result, this assumption is used to
obtain the probabilistic result for returning an
$(\epsg,\epsH)$-stationarity point.

\begin{theorem} \label{theo:inexact2.wccorder}
  Under \cref{assum:f}, for any realization of a run, the number of
  successful iterations $($and objective gradient evaluations$)$
  performed by \cref{alg:inexact2} before termination occurs satisfies
  $($with $\bar{K}_{\Scal}(\epsg,\epsH)$ defined in
  \cref{eq:inexact2.wccitssuccValue}$)$
  \begin{equation} \label{eq:inexact2.wccordersuccits}
    |\Scal| \leq \bar{K}_{\Scal}(\epsg,\epsH) = \mathcal{O} \left( \max\{\epsH^{-3},\epsH^{-1},\epsg^{-2}\epsH\}\right)
  \end{equation}
  and the total number of iterations (and objective function
  evaluations) performed before termination occurs satisfies
  \begin{equation} \label{eq:inexact2.wccorderits}
    \begin{aligned}
      |\Kcal| 
      &\leq \left\lfloor 1 + 
      \log_{\gamma_1}\left(\tfrac{3(1-\eta)}{2\lipH\delta_{\max}}\right)+
      \log_{\gamma_1}(\epsH)\right\rfloor(\bar{K}_{\Scal}(\epsg,\epsH)+1) \\
      &= \mathcal{O}\left(
      \log_{1/\gamma_1}(\epsH^{-1}) \max\{\epsH^{-3},\epsH^{-1}, \epsg^{-2}\epsH\}
      \right).
    \end{aligned}
  \end{equation}
  If $\capCG = \false$, then $\|g_k\| \leq \epsg$ holds at
  termination.  In any case,
 \revisedbis{given Implementation Strategy~4.1}, the vector $x_k$ returned by \cref{alg:inexact2} is an
  $(\epsg,\epsH)$-stationary point with probability at least
  $(1-\pbpi)^{\bar{K}_{\Scal}(\epsg,\epsH)}$.
\end{theorem}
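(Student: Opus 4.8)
The plan is to establish the three claims in turn, drawing the first two almost entirely from results already proved and devoting the real effort to the probabilistic guarantee. First I would prove~\eqref{eq:inexact2.wccordersuccits}: the bound $|\Scal| \le \bar K_{\Scal}(\epsg,\epsH)$ is exactly \cref{lem:inexact2.wccsucc}, and the $\mathcal{O}(\cdot)$ expression follows by reading off the dominant terms from~\eqref{eq:inexact2.wccitssuccValue}. Next, for~\eqref{eq:inexact2.wccorderits}, I would write $|\Kcal| = |\Scal| + |\Ucal|$, bound $|\Scal|$ as above, bound $|\Ucal|$ via \cref{lem:inexact2.wccunsuccsucc}, and combine; the logarithmic factor comes from the $\log_{\gamma_1}(\epsH)$ term (using $\log_{\gamma_1}(\epsH) = \log_{1/\gamma_1}(\epsH^{-1})$), exactly paralleling the proof of \cref{theo:exact.wccorder}.

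For the claim that $\|g_k\| \le \epsg$ at termination when $\capCG = \false$: I would argue that with $\capCG = \false$, line~\ref{step:n} sets $\itmax \gets n$, so by standard CG theory \cref{alg:etcg} can never return with $\sout = \OKImax$ (this is the first half of the proof of \cref{lem:neg-exists}). Hence the only way \cref{alg:inexact2} terminates (line~\ref{step:return-inexact}) is via the branch $\sout = \OKIres$ with $\|g_k\| \le \epsg$, since termination occurs only inside the ``else'' branch entered at line~\ref{step:deviate.else}, whose two triggering cases are $\sout = \OKImax$ (now excluded) or $\|g_k\| \le \epsg$ with $\sout = \OKIres$. So $\|g_k\| \le \epsg$ necessarily holds.

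The substantive part is the probabilistic statement, and here the main obstacle is correctly handling the fact that the MEO is called possibly many times across a run, each call failing independently with probability at most $\pbpi$, while the event we want is that \emph{no} failure ever causes a premature return. The key observation, supplied by \revisedbis{Implementation Strategy~\ref{impstr}}, is that between two consecutive successful iterations \cref{alg:meo} is invoked at most once with a fresh random vector: once it returns a negative curvature direction at an unsuccessful iteration, that stored direction $v_k$ is reused (rescaled) on all subsequent unsuccessful iterations until the next success, so no new randomness and no new failure opportunity is introduced. Therefore the total number of genuinely random MEO calls over the whole run is at most one per successful iteration plus the final call that triggers termination, i.e.\ at most $|\Scal| + 1$; but the final call, when it reports $H_k \succeq -\epsH I$, must itself be a ``non-failure'' for the returned point to be $(\epsg,\epsH)$-stationary, and $\|g_k\| \le \epsg$ is guaranteed at that point by the branch logic at line~\ref{step:call-meo}. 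I would then bound the probability that every one of these at most $\bar K_{\Scal}(\epsg,\epsH)$ relevant MEO calls correctly certifies (or correctly fails to certify) $H_k \succeq -\epsH I$ by $(1-\pbpi)^{\bar K_{\Scal}(\epsg,\epsH)}$, using independence of the random vectors across calls and the per-call failure bound $\pbpi$ from \cref{alg:meo}'s specification, together with the deterministic upper bound $|\Scal| \le \bar K_{\Scal}(\epsg,\epsH)$ from the first part. The care needed is in the bookkeeping: arguing that conditioning on no earlier failure does not change the per-call failure probability (the random vectors are drawn independently of the algorithm's history), and that a run in which all relevant MEO calls succeed indeed terminates at an $(\epsg,\epsH)$-stationary point — the gradient condition from the branch entry and the eigenvalue condition from the correct MEO certification.
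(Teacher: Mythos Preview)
Your treatment of \eqref{eq:inexact2.wccordersuccits}, \eqref{eq:inexact2.wccorderits}, and the $\capCG=\false$ claim matches the paper's proof exactly: those parts simply invoke \cref{lem:inexact2.wccsucc}, \cref{lem:inexact2.wccunsuccsucc}, and the observation that $\sout \neq \OKImax$ when $\capCG=\false$.

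The probabilistic part is where your argument and the paper's diverge in style. You count fresh MEO calls (at most one per ``stretch'' between successes, by \revisedbis{Implementation Strategy~\ref{impstr}}) and then multiply per-call success probabilities $(1-\pbpi)$ using independence of the random seeds. The paper instead runs a formal induction over iterations: it defines $\tilde P_k$ (probability of reaching iteration $k$ at a non-stationary point) and $P_k$ (probability of incorrect termination at iteration $k$), uses $P_k \le \pbpi \tilde P_k \le \pbpi(1-\sum_{i<k}P_i)$, and proves $\sum_{i\le k} P_i \le 1-(1-\pbpi)^{M_k}$ where $M_k$ is the number of MEO calls through iteration $k$. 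Your approach is morally the same---your parenthetical about ``conditioning on no earlier failure does not change the per-call failure probability'' is precisely the fact the induction exploits---but the paper's induction handles more cleanly the issue that the \emph{number} and \emph{location} of MEO calls are themselves random and depend on earlier MEO outputs, which makes a naive product-of-independent-events argument delicate to state rigorously.

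Two small wrinkles in your write-up: first, your count gives $|\Scal|+1$ fresh MEO calls but you then assert ``at most $\bar K_{\Scal}(\epsg,\epsH)$,'' which does not follow from $|\Scal|\le \bar K_{\Scal}(\epsg,\epsH)$ alone; the paper asserts $M_k \le |\Scal|$ directly. Second, your claim that ``$\|g_k\|\le\epsg$ is guaranteed at that point by the branch logic'' is not correct when $\capCG=\true$: the else branch at line~\ref{step:deviate.else} can also be entered with $\sout=\OKImax$ and $\|g_k\|>\epsg$. What is true is that \emph{if the MEO is correct}, then by \cref{lem:neg-exists} the $\OKImax$ case cannot lead to termination, so a correct terminating call must occur in the $\|g_k\|\le\epsg$ sub-case---but this is a consequence of MEO correctness, not of the branch logic by itself.
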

\begin{proof}
The results in \cref{eq:inexact2.wccordersuccits} and
\cref{eq:inexact2.wccorderits} follow from \cref{lem:inexact2.wccsucc}
and \cref{lem:inexact2.wccunsuccsucc}.  If $\capCG = \false$, then the
flag output by \cref{alg:etcg} has $\sout \neq \OKImax$.  Combining
this fact with line~\ref{step:deviate} of \cref{alg:inexact2} allows
us to conclude that $\|g_k\| \leq \epsg$ when termination occurs in
this case.

\revisedbis{Suppose that Implementation Strategy~4.1 is used. Then,}
the vector $x_k$ returned by \cref{alg:inexact2} is not an
$(\epsg,\epsH)$-stationary point only if the MEO (\cref{alg:meo})
makes an inaccurate \revisedbis{indication of
  near-positive-definiteness}, which, each time it is called, can
occur with probability at most $\pbpi$.  \revisedbis{Our goal now is to prove that the vector~$x_k$ returned by \cref{alg:inexact2} is an
  $(\epsg,\epsH)$-stationary point with probability at least
  $(1-\pbpi)^{\bar{K}_{\Scal}(\epsg,\epsH)}$.  To that end, for all $k \in \N{}$, let ${\tilde P}_k$ be the probability that the algorithm reaches iteration $k$ and $x_k$ is not $(\epsg,\epsH)$-stationary.  Similarly, for all $k \in \N{}$, let $P_k$ be the probability that the algorithm reaches iteration $k$ and $x_k$ is not $(\epsg,\epsH)$-stationary, yet the algorithm terminates in iteration $k$ due to an inaccurate indication from the MEO.  It follows from the aforementioned property of the MEO in this setting that $P_k \leq \xi {\tilde P}_k$ for all $k \in \N{}$.  Thus, since it is trivially true that
  \begin{equation*}
    {\tilde P}_k + \sum_{i=0}^{k-1} P_i \leq 1\ \ \text{for all}\ \ k \in \N{},
  \end{equation*}
  it follows that
  \begin{equation}\label{eq.new_prob}
    P_k \leq \pbpi {\tilde P}_k \leq \pbpi \left(1-\sum_{i=0}^{k-1} P_i \right)\ \ \text{for all}\ \ k \in \N{}.
  \end{equation}
  We now define $M_k$ to be the number of calls to the MEO that have
  occurred up to and including iteration~$k$, for any $k \in \N{}$.
  Let us now prove by induction that $\sum_{i=0}^k P_i \le 1-
  (1-\pbpi)^{M_k}$ for all $k \in \N{}$.  For $k=0$, the claim holds
  trivially both when $M_0=0$ (in which case $P_0 = 0$) and when
  $M_0=1$ (in which case $P_0 \le \pbpi$).  Now suppose that the claim
  is true for some $k \in \N{}$; we aim to prove that it remains true
  for $k+1$.  If the algorithm reaches iteration $k+1$ in which
  $x_{k+1}$ is not $(\epsg,\epsH)$-stationary and the MEO is {\em not}
  called in iteration $k+1$, then $M_{k+1} = M_k$ and $P_{k+1} = 0$,
  so by the induction hypothesis it follows that
  \begin{equation*}
    \sum_{i=0}^{k+1} P_i = \sum_{i=0}^k P_i \le 1-
  (1-\pbpi)^{M_k} = 1-
  (1-\pbpi)^{M_{k+1}},
  \end{equation*}
  as desired.   On the other hand, if the algorithm reaches iteration $k+1$, the iterate $x_{k+1}$ is not $(\epsg,\epsH)$-stationary, and the MEO {\em is} called in
  iteration $k+1$, then $M_{k+1}=M_k+1$ and along with \cref{eq.new_prob} and the induction hypothesis it follows that
  \begin{align*}
    \sum_{i=0}^{k+1} P_i & = \sum_{i=0}^k P_i  + P_{k+1} \\
    &\le  \sum_{i=0}^k P_i  + \pbpi \left(1-\sum_{i=0}^k P_i \right) \\
    &= \pbpi + (1-\pbpi) \sum_{i=0}^k P_i \\
    & \le \pbpi + (1-\pbpi) (1-(1-\pbpi)^{M_k}) \\
    &= 1-(1-\pbpi)^{M_k+1} = 1-(1-\pbpi)^{M_{k+1}},
    \end{align*}
as desired, again. Since $| \Scal|$ is an upper bound on the number of
successful steps, which in turn bounds the number of calls to MEO when
Implementation Strategy~4.1 is used, we have $M_k \le |\Scal| \le
\bar{K}_{\Scal}(\epsg,\epsH)$ for all $k$.  We conclude that the
probability that the algorithm terminates incorrectly due to an
incorrect indication from MEO on {\em any} iteration is bounded above
by $1-(1-\pbpi)^{\bar{K}_{\Scal}(\epsg,\epsH)}$.  Consequently, the probability 
that the algorithm outputs an $(\epsg,\epsH)$-stationary point 
is at least $(1-\pbpi)^{\bar{K}_{\Scal}(\epsg,\epsH)}$, as claimed.}
\end{proof}


Finally, we state a complexity result for the number of Hessian-vector
products. For simplicity, we focus on the case of a small tolerance 
$\epsg$.

\begin{theorem}\label{thm:inexact2.wccHvprod}
  Let \cref{assum:f} and \cref{assum:wccmeo} hold, \revisedbis{and suppose that Implementation Strategy~4.1 is used.}
  Suppose that $\epsH = \epsg^{1/2}$ with $\epsg\in(0,1)$.  Then, for
  any realization of a run of \cref{alg:inexact2}, the total number of
  Hessian-vector products performed satisfies:
  \begin{itemize}
  \item[(i)] If $\capCG = \false$, then the number of Hessian-vector products is bounded by
  \[
  n|\Kcal| + N_{\mathrm{meo}}(\epsH)|\bar K_{\Scal}(\epsg,\epsH)| 
  = n \tcO (\epsg^{-3/2}).
  \]
  \item[(ii)] If $\capCG = \true$, then the number of Hessian-vector
    products is bounded by
    \[
    \min\{n,J(L_g,\epsH,\zeta)\}|\Kcal| +
    N_{\mathrm{meo}}(\epsH) |\bar K_{\Scal}(\epsg,\epsH)| 
    =
    \min\{n,\epsg^{-1/4}\} \tcO(\epsg^{-3/2}).
   \]
  \end{itemize}
\end{theorem}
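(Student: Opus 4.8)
The plan is to count Hessian-vector products separately for the two subroutines that incur them---the truncated CG method (\cref{alg:etcg}) called in line~\ref{step:call-cg}, and the minimum eigenvalue oracle (\cref{alg:meo}) called in line~\ref{step:call-meo})---bound the cost of each call, multiply by a bound on the number of calls, and then substitute $\epsH=\epsg^{1/2}$ using the iteration-complexity results already established in \cref{theo:inexact2.wccorder}. It is worth noting at the outset that these are the only sources of Hessian-vector products in \cref{alg:inexact2}, aside from an $O(1)$ additional product per iteration needed to evaluate $m_k(x_k+s_k)$ when forming $\rho_k$; the quantities $g_k$, $f_k$, and the bound $M$ do not require Hessian-vector products.

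For the CG calls, observe that each iteration of \cref{alg:etcg} performs a single Hessian-vector product (forming $(H+2\eps I)p_j$, which also supplies the curvature value $p_j^T(H+2\eps I)p_j$), so a call uses at most $\itmax$ such products. When $\capCG=\false$ we have $\itmax=n$, giving at most $n$ products per call and hence at most $n|\Kcal|$ products in total from CG. When $\capCG=\true$, we take $M\ge L_g\ge\|H_k\|$ (the latter by \cref{eq:fub.4}); by line~\ref{step:cap} the cap satisfies $\itmax\le\min\{n,J(L_g,\epsH,\zeta)\}$, and \cref{lem:cgits} confirms that whenever $\epsH I\prec H_k+2\epsH I$ the call in fact terminates within this many iterations (if instead the iteration cap is reached, the count is still at most $\itmax$). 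Thus each CG call uses at most $\min\{n,J(L_g,\epsH,\zeta)\}$ Hessian-vector products, for a total of at most $\min\{n,J(L_g,\epsH,\zeta)\}\,|\Kcal|$.

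For the MEO calls, \cref{assum:wccmeo} bounds the per-call cost by $N_{\mathrm{meo}}(\epsH)$, so it remains to bound the number of calls. By Implementation Strategy~\ref{impstr}, once \cref{alg:meo} returns a negative curvature direction that is subsequently rejected, that direction is stored and reused (rescaled to the current radius) on every ensuing unsuccessful iteration, so \cref{alg:meo} is invoked at most once between two consecutive successful iterations; moreover, combining \cref{lem:inexact2.delta-lb} with \cref{lem:inexact2.modeldec} shows that a run of unsuccessful iterations with a fixed negative curvature direction cannot continue indefinitely---the radius shrinks until the guaranteed model decrease forces a successful step---so each such invocation (apart from a possible final, terminating one) is followed by a successful iteration before the next invocation. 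Hence the number of MEO calls is at most $|\Scal|+1\le\bar K_{\Scal}(\epsg,\epsH)+1$, and the total MEO contribution is at most $N_{\mathrm{meo}}(\epsH)\,(\bar K_{\Scal}(\epsg,\epsH)+1)$.

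Adding the two contributions and specializing to $\epsH=\epsg^{1/2}$ completes the argument. From \cref{theo:inexact2.wccorder}, $|\Kcal|=\tcO(\epsg^{-3/2})$ and $|\Scal|\le\bar K_{\Scal}(\epsg,\epsH)=O(\epsg^{-3/2})$, while \cref{assum:wccmeo} gives $N_{\mathrm{meo}}(\epsH)=\min\{n,O(\epsg^{-1/4})\}$ (using $\epsH^{-1/2}=\epsg^{-1/4}$ and that $\Cmeo$ is at most logarithmic in $\pbpi$), and $J(L_g,\epsH,\zeta)=\tcO(\epsH^{-1/2})=\tcO(\epsg^{-1/4})$. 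Since $N_{\mathrm{meo}}(\epsH)\le n$ always, in case~(i) the MEO term is $O(n\epsg^{-3/2})$, dominated by $n\tcO(\epsg^{-3/2})$; in case~(ii), since $N_{\mathrm{meo}}(\epsH)\le\min\{n,O(\epsg^{-1/4})\}$, both terms are of order $\min\{n,\epsg^{-1/4}\}\,\tcO(\epsg^{-3/2})$. I expect the main obstacle to be the bookkeeping for the MEO calls: rigorously justifying that Implementation Strategy~\ref{impstr} caps the number of oracle invocations at essentially $|\Scal|$ requires the argument (via \cref{lem:inexact2.delta-lb}) that a sufficiently small trust-region radius necessarily yields a successful step, so that a long block of rejected steps using a single stored curvature direction must end in a success. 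By contrast, the CG per-iteration count is an essentially immediate consequence of \cref{lem:cgits} and the iteration cap, and the final substitution of $\epsH=\epsg^{1/2}$ is routine.
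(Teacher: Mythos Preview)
Your proof is correct and follows essentially the same approach as the paper: decompose the Hessian-vector products into those from \cref{alg:etcg} and those from \cref{alg:meo}, bound each per-call cost (by $n$ or $\min\{n,J(L_g,\epsH,\zeta)\}$ for CG depending on $\capCG$, and by $N_{\mathrm{meo}}(\epsH)$ for the MEO), multiply by the number of calls ($|\Kcal|$ and $\bar K_{\Scal}(\epsg,\epsH)$ respectively), and substitute $\epsH=\epsg^{1/2}$ using \cref{theo:inexact2.wccorder}. Your treatment is in fact slightly more explicit than the paper's in justifying the MEO call count via Implementation Strategy~\ref{impstr} and \cref{lem:inexact2.delta-lb}, and in noting the $O(1)$ product per iteration for forming $\rho_k$.
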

\begin{proof}
First, suppose $\capCG = \false$.  Then, for any $k \in \Kcal$ in any
realization of a run of the algorithm, the maximum number of
Hessian-vector products computed by the truncated CG algorithm is $n$.
In addition, over any realization of a run, the maximum number of
Hessian-vector products computed by the MEO is
$N_{\mathrm{meo}}(\epsH)$ (see \cref{eq:wccmeo}) each of the (at most)
$|\bar K_{\Scal}(\epsg,\epsH)|$ times it is called.  Since the number
of Hessian-vector products performed by \cref{alg:inexact2} is the sum
of these two, we have proved the left-hand side of part (i). For the
estimate $n \tcO(\epsg^{-3/2})$, we use $\epsH=\epsg^{1/2}$, the bound
on $\Kcal$ from \cref{theo:inexact2.wccorder}, the estimate of
$N_{\mathrm{meo}}$ from \cref{assum:wccmeo}, and
the fact that $\max \{ \epsH^{-3},\epsH^{-1},\epsg^{-2} \epsH \} =
\max \{ \epsg^{-3/2},\epsg^{-1/2} \} = \epsg^{-3/2}$ when
$\epsg\in(0,1)$.

For part (ii), we use the same estimates as well as the estimate of
$J(L_g,\epsH,\zeta)$ from \cref{lem:cgits} and \cref{eq:fub.4}, noting
that both $|\Kcal|$ and $\bar{K}_{\Scal}(\epsg,\epsH)$ are
$\tcO(\epsg^{-3/2})$ while $J(L_g,\epsH,\zeta)$ and $N_{\mathrm{meo}}$
are both $\min\{n,\tcO(\epsg^{-1/4})\}$.
\end{proof}

Note that for $n \gg \epsg^{-1/4}$, the bound in part (ii) of
  this theorem is $\tcO(\epsg^{-7/4})$, which is a familiar quantity
  in the literature on the operation complexity required to
  find an $(\epsg,\epsg^{1/2})$-stationary point~\cite{NAgarwal_ZAllenZhu_BBullins_EHazan_TMa_2017,YCarmon_JCDuchi_OHinder_ASidford_2018a,CWRoyer_SJWright_2018}.

\cref{thm:inexact2.wccHvprod} illustrates the benefits of using a
capped truncated CG routine in terms of attaining good computational
complexity guarantees. As a final remark, we expect the ``cap'' of
Algorithm~\ref{alg:etcg} to be triggered only in rare cases, due to
the conservative nature of the CG convergence bounds that gave rise to
this cap.

\section{Practical considerations}
\label{sec:practice}

\revised{ Having presented an analysis of the theoretical complexity of
  our inexact trust-region Newton-CG approach, we consider several
  practical issues that arise in developing a computational
  implementation of this method.}

\revised{ The randomness inherent in \cref{alg:meo} is central to the
  complexity analysis of \cref{alg:inexact2} that was presented
  in Section~\ref{sec:inexact}. Since the randomness carries with it a small probability of
  failure of \cref{alg:meo}, two unsavory situations can occur that
  lead to ``failure modes'' for \cref{alg:inexact2}.}  First, suppose
that \cref{alg:meo} is called in line~\ref{step:call-meo} because
$\|g_k\| \leq \epsg$ and $\sout = \OKIres$. In this case, if $\capCG =
\true$ and \cref{alg:meo} predicts that $\lambdamin(H_k) \geq -\epsH$,
then with probability up to $\pbpi$ this \revisedbis{indication} is incorrect and a
direction of sufficiently negative curvature actually exists but was
not found.  Second, suppose that \cref{alg:meo} is called because
$\sout = \OKImax$.  Here, it is again possible with probability up to
$\pbpi$ that the \revisedbis{indication} $\lambdamin(H_k) > -\epsH$ will be made,
even though we know from \cref{lem:neg-exists} that $\lambdamin(H_k)
\leq -\epsH$.  This second case can occur even when $\|g_k\| > \epsg$,
meaning that termination can occur at a point that is not even
$\epsg$-stationary. Note, however, that in a given iteration the
probability of these two situations is bounded by $\pbpi$, which
appears only logarithmically in the constant $\Cmeo$ of
\cref{assum:wccmeo}, and thus can be chosen to be extremely
small. \revised{We can avoid this second case by replacing
  \cref{alg:etcg} with the alternative truncated CG method of
  \cite[Algorithm~1]{CWRoyer_MONeill_SJWright_2019}, which computes
  and returns a negative curvature direction whenever it detects that
  one exists. This ``implicitly capped'' method is more complicated to
  describe than the ``explicitly capped'' version of CG that we
  consider here, so in this paper we opted for simplicity of
  description at the expense of a (small) probability of failure in
  the subsequent call to \cref{alg:meo}.}

\revised{Our experiments with the inexact algorithms reported in
  Section~\ref{sec:numer} use the randomized Lanczos procedure of
  \cite{JKuczynski_HWozniakowski_1992,JKuczynski_HWozniakowski_1994}.
  In practice, \cref{alg:meo} is rarely invoked by
  \cref{alg:inexact2}---in the vast majority of test problems, it is
  invoked only once on the last iteration of \cref{alg:inexact2} as a
  final check that the Hessian is approximately positive semidefinite.
  Rather than explicitly capping the number of iterations to the value
  described in \cref{assum:wccmeo} and the comments that follow it, we
  use an adaptive criterion to decide when a close approximation to
  the minimum eigenvalue has been detected. Specifically, we stop at
  iteration $l$ if $\lambda_{l-t} - \lambda_l \le
  10^{-5}$, 
    where $t = \min\{l,n,10\}$ and
  $\lambda_l$ is the estimate of the minimum eigenvalue at the $l$th
  iteration of randomized Lanczos. It makes sense to use such a tight
  tolerance, since \cref{alg:meo} is rarely invoked by
  \cref{alg:inexact2}, so that the cost of doing a careful check for
  approximate semidefiniteness is worthwhile.}

\revised{In our implementations of the Truncated CG procedure
  (\cref{alg:etcg}), we use the quantity $\bar{n} := \min\{n+2,1.2n\}$
  in place of $n$, in determining the maximum number of iterations
  $k_{\max}$. This relaxation can be beneficial in practice since loss of conjugacy
  due to numerical rounding can result in a zero residual {\em not}
  being attained by CG after $n$ steps. Typically, a small number of
  additional iterations beyond $n$ suffices to obtain a more accurate
  solution. Another key parameter in this algorithm, the value $\zeta$
  used as a convergence threshold for the residual, is set to $.25$ in
  our tests. (We discuss the settings of the parameters in
  \cref{alg:inexact2} in the next section.) }

\section{Computational experiments} 
\label{sec:numer}

We implemented several variants of trust-region Newton methods in
Matlab, as follows.
\begin{itemize}
  \item \emph{TR-Newton}.  An implementation of \cref{alg:exact} with
    the trust-region subproblem solved using a Mor\'e-Sorensen
    approach \cite{JJMore_DCSorensen_1983}.
  \item \emph{TR-Newton (no reg.)}.  The same as \emph{TR-Newton},
    \revisedbis{except that the regularization term involving $\epsH$ is 
    omitted from the subproblem objective}
    \cref{eq:algos:exacttrsubpb}.  This variant demonstrates the
    effect of this regularization term on the practical performance of
    \emph{TR-Newton}.
  \item \emph{TR-Newton-CG-explicit}.  An implementation of
    \cref{alg:inexact2} with an explicit cap on the number of CG
    iterations (that is, $\capCG = \true$).
  \item \revised{\emph{TR-Newton-CG-explicit (no reg.)}. The same as 
  \emph{TR-Newton-CG-explicit}, except that the regularization 
  term involving $\epsH$ is omitted from the subproblem objective, 
  again to illustrate the impact of this regularization.}
  \item \revised{\emph{TR-Newton-CG}.  An implementation of 
    \cref{alg:inexact2} without an explicit cap (that is, 
    $\capCG = \false$).}
  \item \revised{\emph{TR-Newton-CG (no reg.)}.  The same as
    \emph{TR-Newton-CG}, except that the regularization term
    involving $\epsH$ is omitted from the subproblem objective.  This
    method is the most similar to traditional trust-region Newton-CG
    with Steihaug-Toint stopping rules for the CG routine.  The only
    differences with the latter approach is that, for consistency
    with the other methods, we use the stopping test on line~\ref{step:res}
    of \cref{alg:etcg}, and still use 
    the MEO (\cref{alg:meo}) to ensure convergence to an
    $(\epsg,\epsH)$-stationary point (with high probability).}
  \item \emph{TRACE}.  The trust-region algorithm with
    guaranteed optimal complexity proposed and analyzed in
    \cite{FECurtis_DPRobinson_MSamadi_2017}.
\end{itemize}

\revised{For the six instances of our algorithms, we used a  
regularization of $2\epsH$ while computing the Newton-type step. We set 
$\gamma_1=\gamma_2^{-1}=0.5$, $\psi=0.75$, 
$\eta=0.1$, $\zeta=0.25$, $\delta_{\max}=10^{20}$ and $\delta_0=10$.}


Our experiments show that the empirical performance of these methods
is similar in terms of the number of iterations, function evaluations,
and gradient evaluations required to locate an
$(\epsg,\epsH)$-stationary point.
\revised{In particular, the performance of the Newton-CG variants in 
terms of iterations, function and gradient evaluations is almost 
unaffected by the presence of a cap.}
A second observation is that the regularization term in the
trust-region subproblem objective in \cref{alg:inexact2}, which is
required to ensure optimal iteration and operation complexity
properties for this method, has a noticeable effect on practical
performance, \revised{mostly in terms of Hessian-vector products. We
  discuss this effect further below.}

We tested the algorithms using problems from the CUTEst test
collection \cite{NIMGould_DOrban_PhLToint_2015}. \revised{Many
  problems in this benchmark come with different size options. If the
  default size (according to the sizes that come with the
  distribution, downloaded July 1, 2020) was in the range [100,1000],
  then we used the default size.  Otherwise, we choose the size
  closest to the range [100,1000] from the default value. This
  resulted in a test set of 233 problems.  In order to focus on the
  results of experiments for the larger problems in the set, the
  following discussion and presentation of results considers only the
  problems with $n \geq 100$; this is a test set of 109 problems.}

Figure~\ref{fig:profiles} shows performance profiles for various
metrics \cite{DolaMore02}.  The horizontal axis is capped at $\tau=10$
in order to distinguish the performance of the methods more clearly.
We considered two termination tolerances.  In the first set of
experiments, corresponding to the left column of plots in
Figure~\ref{fig:profiles}, termination was declared when the algorithm
encountered a $(10^{-5},10^{-5/2})$-stationary point.  In the second
set of experiments (the right column of plots in
Figure~\ref{fig:profiles}) we terminate at
$(10^{-5},10^{-5})$-stationary points. In both sets of runs, we
imposed an iteration limit of $10^4$.  For the trust-region Newton-CG
methods, we also imposed an overall Hessian-vector product limit of
$10^4n$: A run was declared to be unsuccessful if this limit is
reached without a stationary point of the specified precision being
found. Although not evident from the performance profiles due to the
cap on $\tau$, all algorithms solved \revised{at least 101 test 
problems out of 109} for both stationarity tolerances, a reliability 
of about \revised{93\%}.

\begin{figure}
  \centering
  \includegraphics[width=.46\textwidth]{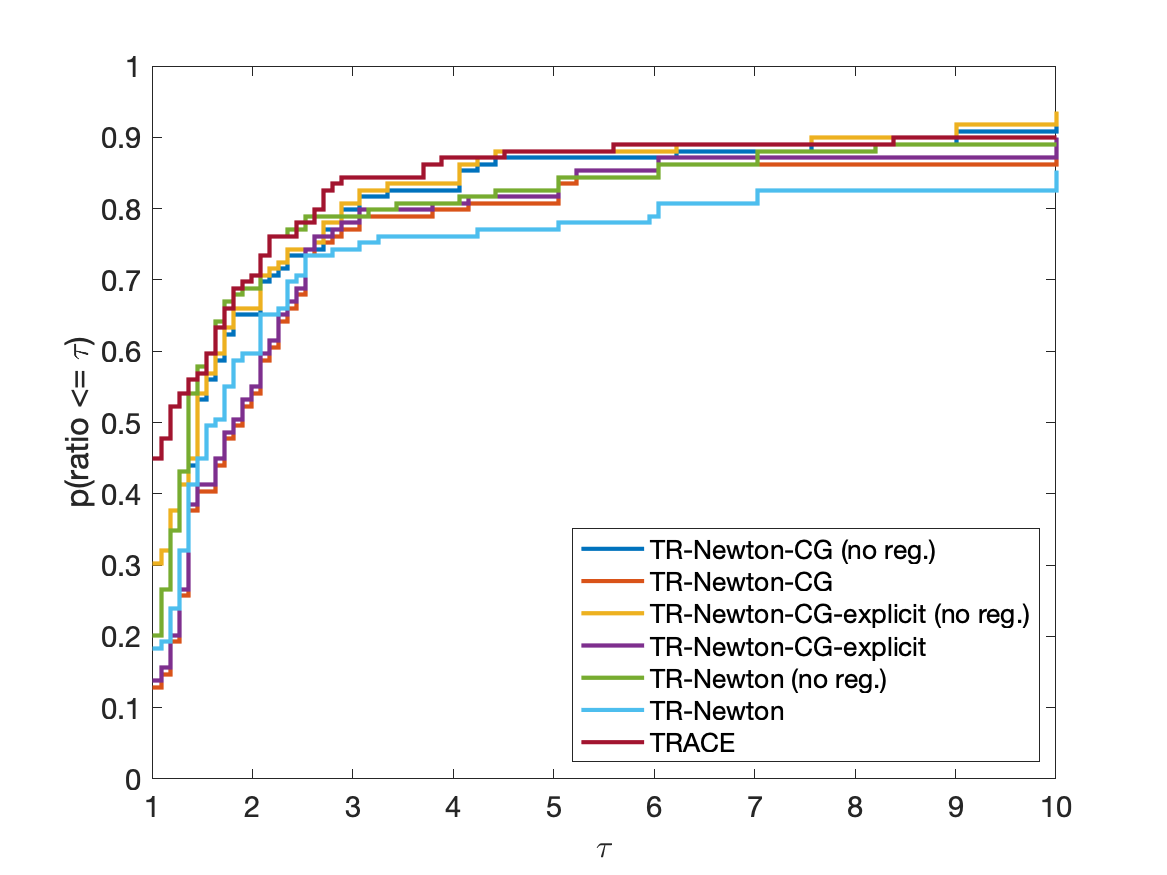}\ 
  \includegraphics[width=.46\textwidth]{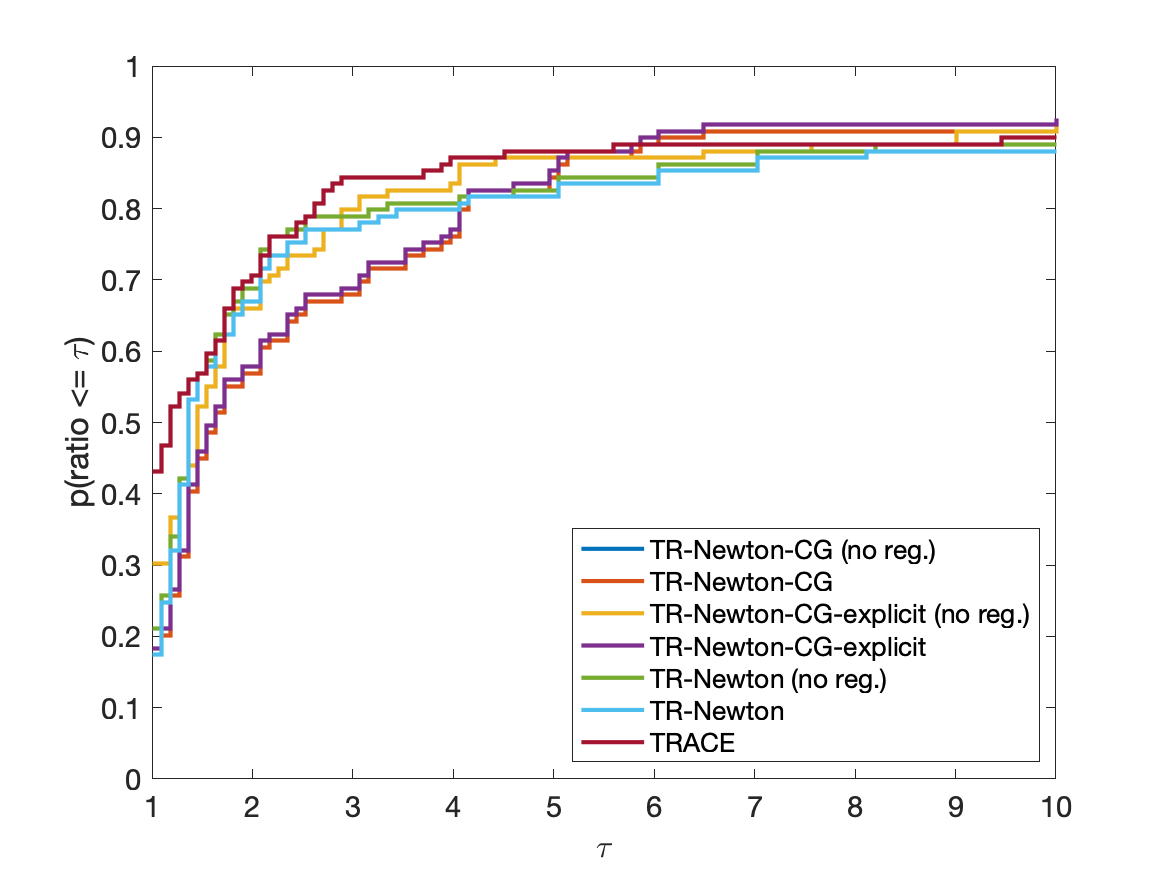} \\
  \includegraphics[width=.46\textwidth]{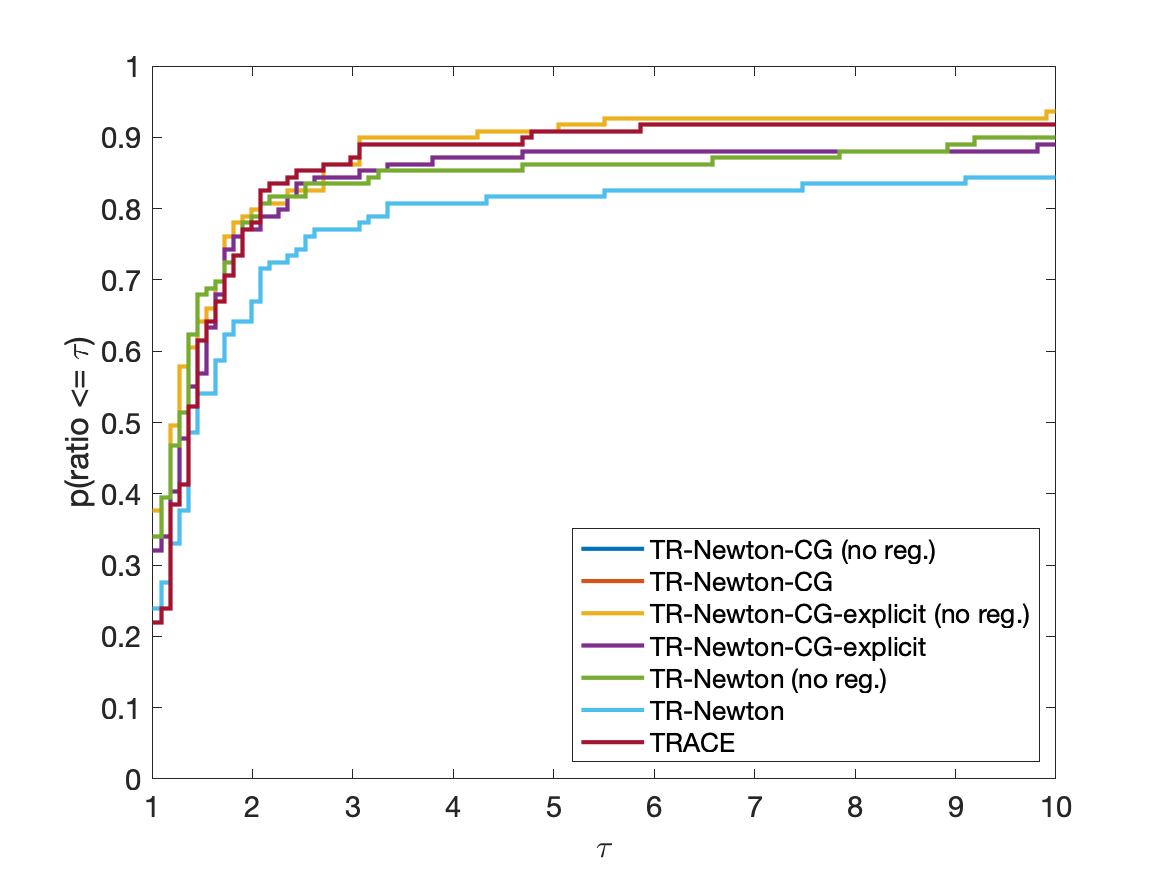}\ 
  \includegraphics[width=.46\textwidth]{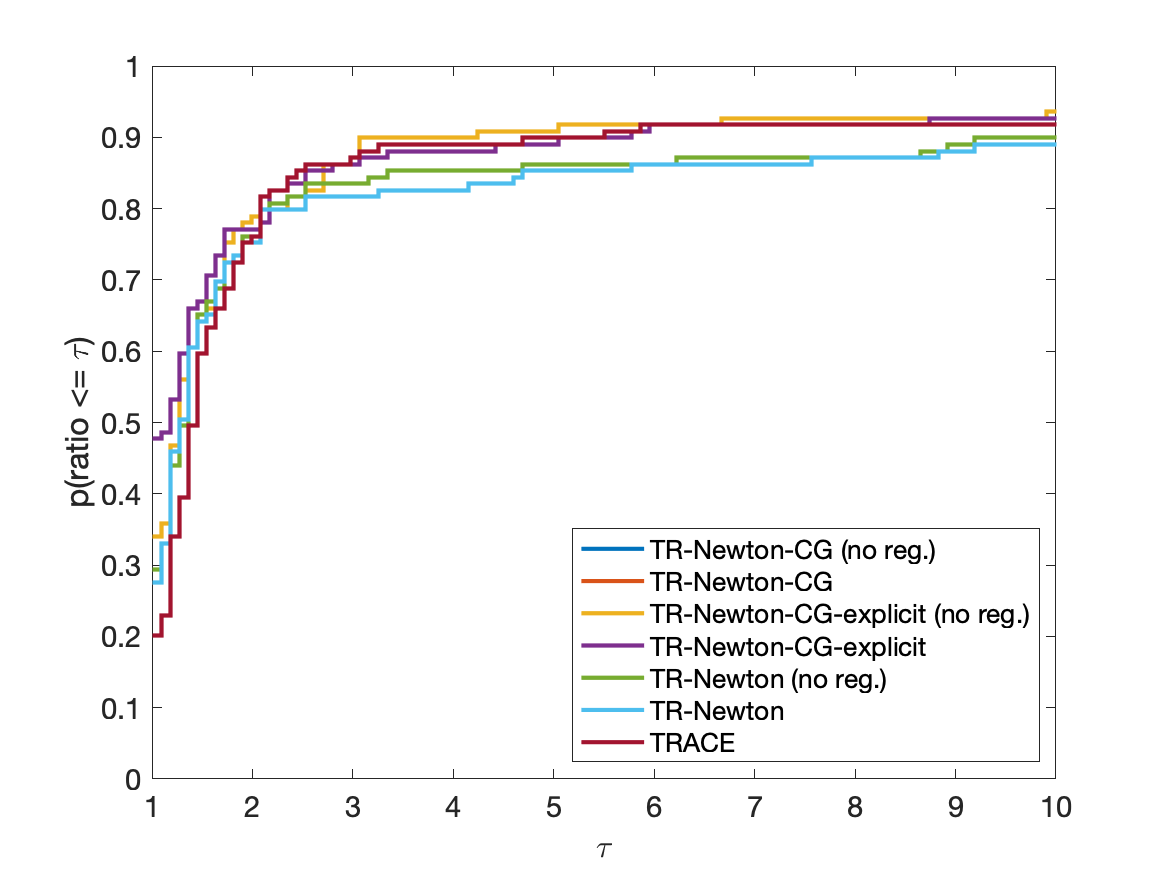} \\
  \includegraphics[width=.46\textwidth]{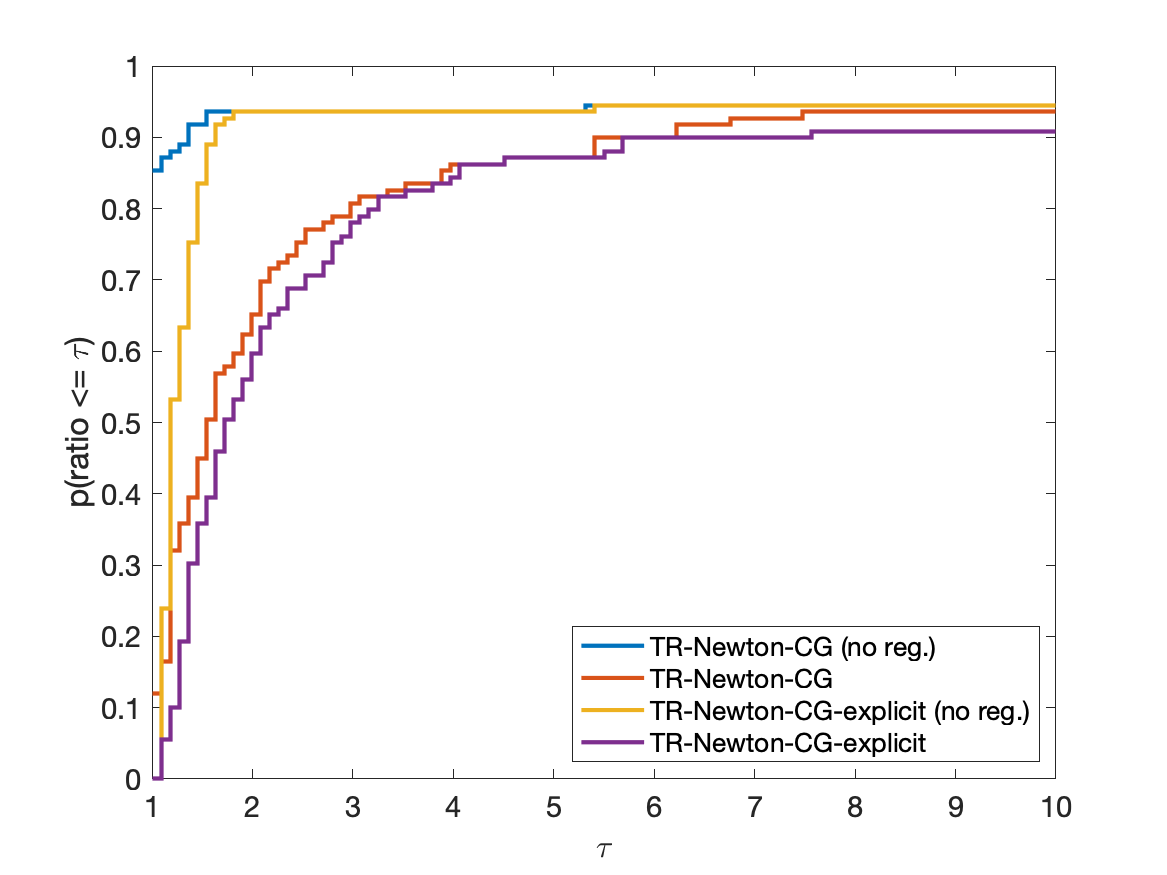}\ 
  \includegraphics[width=.46\textwidth]{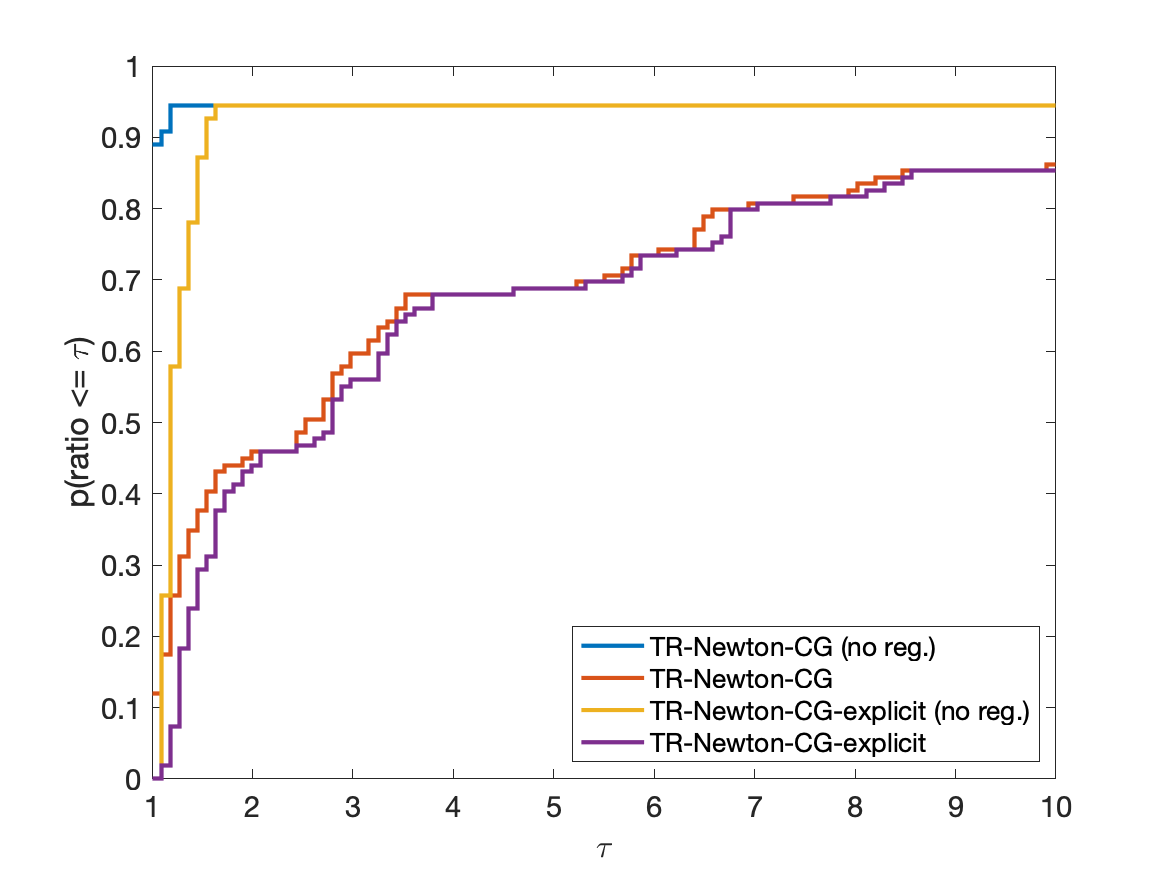} \\
  \caption{Performance profiles for iterations (top), gradient
    evaluations (middle), and Hessian-vector products (bottom). \revised{A
    termination tolerance of $(\epsg,\epsH)=(10^{-5},10^{-5/2})$ is
    used for the left column, and a termination tolerance of $(\epsg,\epsH) = (10^{-5},10^{-5})$ is used for the right column.}}
  \label{fig:profiles}
\end{figure}

Figure~\ref{fig:profiles} shows the performance of all algorithms to be
similar in terms of required iterations and gradient evaluations.
We do however see significant differences in the number of
Hessian-vector products required for the \revised{four} TR-Newton-CG
methods.  The variant\revised{s} with no regularization term in the
subproblems outperform the others in this respect\revised{; recall
  that these variants do not possess optimal complexity
  guarantees}. The practical significance of this difference in
performance depends on the cost of computing a gradient relative to
the cost of a Hessian-vector product.  If gradient evaluations are
significantly more expensive, our results suggest no substantial
difference in computation time between the \revised{four} Newton-CG
methods. On the other hand, if Hessian-vector products are expensive
relative to gradients, there may be a significant increase in run time
as a result of including the regularization term.  \revised{We remark
  that the vast majority of the Hessian-vector products in the
  Newton-CG variants were computed in \cref{alg:etcg}: on all problems
  but three, \cref{alg:meo} was only called at the last iteration to
  assess termination.}


\section{Conclusion} 
\label{sec:conc}
%

We have established that, with a few critical modifications, the
popular trust-region Newton-CG method can be equipped with
second-order complexity guarantees that match the best known bounds
for second-order methods for solving smooth nonconvex optimization
problems. We derived iteration complexity results for both exact and
inexact variants of the approach, and for the inexact variant we
leveraged iterative linear algebra techniques to obtain strong
operation complexity guarantees (in terms of gradient computations and
Hessian-vector products) that again match the best known methods in
the literature.  Finally, we showed that the practical effects of
including these modifications \revised{can be} relatively minor.



Our results could be modified to obtain alternative complexity results
for approximate $\epsg$-stationary points. For instance, we could
modify Algorithm~\ref{alg:etcg} by monitoring the decrease rate of the
residual norm in a way that the number of CG iterations is subject to
an \emph{implicit cap}~\cite{CWRoyer_MONeill_SJWright_2019}, in place
of the explicit cap used here (when $\capCG = \true$). With
appropriate modifications in Algorithm~\ref{alg:inexact2}, and under
the assumptions of \cref{thm:inexact2.wccHvprod}, one could establish
a deterministic operation complexity bound of $\tcO(\epsg^{-7/4})$ for
reaching an $\epsg$-stationary point.  
\revised{However, the resulting method is significantly more delicate to 
implement, and must be paired with Algorithm~\ref{alg:meo} to be endowed 
with a second-order complexity analysis.}


\section*{Acknowledgments}
\revisedbis{We thank Yue Xie for providing most of the proof of
  \cref{theo:inexact2.wccorder}. We are grateful to the editor and two
  anonymous referees, whose comments led to significant improvements
  to the paper.}

\bibliographystyle{siam} \bibliography{refs-newtontr}

\end{document}